\documentclass[11pt,a4paper,american,reqno]{amsart}
\usepackage{amssymb}
\usepackage{amsmath}
\usepackage{graphicx}
\usepackage{enumerate}
\usepackage{physics}
\usepackage{amssymb}
\usepackage{amsthm}
\usepackage{epsf}
\usepackage{amsbsy,amsmath}
\usepackage{mathtools}
\usepackage{mathrsfs}
\usepackage{amsfonts}
\usepackage{eucal}
\usepackage{graphics,mathrsfs}
\usepackage{amsthm}
\usepackage{secdot}
\usepackage{esint}
\usepackage{bbm}
\usepackage{varwidth}
\usepackage{tasks}
\usepackage{cite}
\usepackage[colorlinks,pdfpagelabels,pdfstartview = FitH,bookmarksopen = true,bookmarksnumbered = true,linkcolor = red,plainpages = false,hypertexnames = false,citecolor = red,pagebackref=false]{hyperref}

\theoremstyle{plain}
\newtheorem{theorem}{Theorem}[section]

\newtheorem{lemma}[theorem]{Lemma}

\theoremstyle{definition}
\newtheorem{definition}[theorem]{Definition}

\newtheorem{remark}[theorem]{Remark}
\newtheorem{notation}[theorem]{Notation}

\allowdisplaybreaks

\usepackage{xcolor}

\long\def\symbolfootnote[#1]#2{\begingroup
\def\thefootnote{\fnsymbol{footnote}}\footnote[#1]{#2}\endgroup}

\numberwithin{equation}{section}
\begin{document}

\title[Optimization problem for local-nonlocal operators with potential]{Optimization for the first weighted eigenvalue of local-nonlocal operators with a potential}
\author{R. Lakshmi, Sekhar Ghosh and Ratan Kr. Giri}
\address[R. Lakshmi]{Department of Mathematics, National Institute of Technology Calicut, Kozhikode, Kerala, India - 673601}
\email{lakshmir1248@gmail.com / lakshmi\_p220223ma@nitc.ac.in}
\address[Sekhar Ghosh]{Department of Mathematics, National Institute of Technology Calicut, Kozhikode, Kerala, India - 673601}
\email{sekharghosh1234@gmail.com / sekharghosh@nitc.ac.in}
\address[Ratan Kr. Giri]{Department of Mathematics, The LNM Institute of Information Technology, Jaipur, India - 302031
}
\email{giri90ratan@gmail.com / ratan.giri@lnmiit.ac.in }

\thanks{{\em 2020 Mathematics Subject Classification:} 35P15; 49K20; 35M10; 35P30; 35J60}

\keywords{Optimization; Eigenvalues; Eigenfunctions; Mixed local-nonlocal operator; Rearrangements}
\begin{abstract} 
 In this work, we study the optimization problem for the first eigenvalue of mixed local nonlocal operators plus a potential $V$. We begin by investigating the existence and fundamental properties of the eigenvalues, with special emphasis on the first eigenvalue. Finally, we discuss the dependence of the first eigenvalue on the potential function and establish the existence of optimal potentials within certain admissible classes. In particular, we show the existence of a unique maximizer and a minimizer of the first eigenvalue on any bounded, closed, and convex subset of $L^q(\Omega)$. Moreover, these results enable us to characterize the maximizers and minimizers in the closed unit ball of $L^q(\Omega)$, as well as in the class of rearrangements of any $V\in L^q(\Omega)$. 
\end{abstract}

\maketitle
\section{Introduction}
\noindent  The analysis of eigenvalue problems for elliptic operators plays a fundamental role in the study of partial differential equations, owing to their strong influence on the existence and qualitative properties of solutions to elliptic PDEs, as well as their wide applications in mathematical physics \cite{BH17, G18, BS2022}. Over the years, the existence and properties of eigenvalues for various local, nonlocal, and mixed operators have been extensively investigated; see, for example, \cite{L1990, AP1987, LL2014, FP2014, DFR2019, CQ2009} and the references therein.
\par The optimization theory of eigenvalues of elliptic operators is a significant problem in the theory of eigenvalues due to their importance in fields such as scattering theory, quantum mechanics, etc., (see \cite{BS2007, BS2022}). One of the earlier works in the optimization of eigenvalues to elliptic operators is by Harrell \cite{H1984}, who studied the existence and properties of a potential $V$ which maximizes the first eigenvalue for the operator $-\Delta+V$ on a set of potentials with a restriction on the $L^p$ norm of $V$. Later, for the same operator $-\Delta+V$ with some restrictions on $V$, the existence and uniqueness of both maximizers and minimizers to the first eigenvalue were studied by Ashbaugh and Harrell \cite{AH1987}, and the optimizers were characterized.  Subsequently, Bonder and Pezzo \cite{FD2006} extended the results in \cite{AH1987} to the first eigenvalue for the operator $-\Delta_p+V$, where $\Delta_p$ is the $p$-Laplacian defined by
    $$\Delta_p u=\operatorname{div}(|\nabla u|^{p-2}\nabla u).$$
    In the fractional setup, the study on the optimization of the first eigenvalue to the operator $(-\Delta_p)^s+V$, where $(-\Delta_p)^s$ is the fractional $p$-Laplacian given by
    $$(-\Delta_p)^s u(x)=\int_{\mathbb{R}^N}\frac{|u(x)-u(y)|^{p-2}(u(x)-u(y))}{|x-y|^{N+sp}}dy,$$
    has been explored by Pezzo \textit{et al.} \cite{PBR2018}.
\par The optimization theory of weighted eigenvalues to elliptic operators has also been widely investigated in a set of rearrangements of a potential, weight, or both, where the set of rearrangements of any function $V$, is defined via equimeasure as follows.
\begin{equation}\label{F-RV}
    \mathcal{R}_V:=\left\{W\in L^q(\Omega): |\{W\geq t\}|=|\{V\geq t\}|, \text{ for all } t\in \mathbb{R}\right\}.
\end{equation}
The optimization of the first weighted eigenvalue for the Laplacian with certain restrictions on the weight has been explored in \cite{CM1990, K1955}. Cuccu \textit{et al.} \cite{CEP2009} have investigated the optimization problem of the first weighted $p$-Laplacian with weight in a set of rearrangements of a fixed function $g_0$. For the $p$-Laplacian with potential, the minimization of the first weighted eigenvalue over $\mathcal{R}_{V_0} \times \mathcal{R}_{g_0}$, for a fixed potential $V_0$ and weight $g_0$ was initially studied by Pezzo and Bonder \cite{PB2010} with certain assumptions on $V_0$ and $g_0$, and the study was later extended to more general assumptions on $V_0$ and $g_0$ by Biswas \textit{et al.} \cite{BDG2023}. The optimization of the principal eigenvalue of the weighted $p$-Laplacian with the weights taken from the characteristic functions of subsets of the domain with a fixed measure $\alpha$ has been explored by Marras \textit{et al.} \cite{MPV2013}. The authors in \cite{MPV2013} also studied several properties of the optimizing eigenvalue depending on $\alpha$. The minimization problem for the first weighted eigenvalue of the fractional Laplacian with potential, on a set of rearrangements of the potential and weight, has also been explored by Ghosh \cite{G2025}.
\par The eigenvalue problem involving the mixed local and nonlocal operator plus a potential, $-\Delta_p+(-\Delta_p)^s+V$ in a bounded domain with non-negative potential $V$ has been recently explored in Lakshmi, Giri and Ghosh \cite{LGG2026} for the case $sp<N$. In this work, we extend the results in \cite{LGG2026, PBR2018} to the eigenvalue problem of the mixed local and non-local operator with sign-changing potential, and for all $0<s<1<p<\infty$. In particular, we study the optimization of the first eigenvalue $\lambda_1(V)$ to the problem
\begin{align}\label{F}
    -\Delta_p u+(-\Delta_p)^s u+V|u|^{p-2}u&=\lambda |u|^{p-2}u \hspace{.2cm}\text{ in } \Omega,\nonumber\\
    u&=0 \hspace{1.5cm}\text{ in } \mathbb{R}^N \setminus \Omega,
\end{align}
where $\Omega$ is a bounded domain, $0<s<1<p<\infty$ and the potential $V \in L^q(\Omega)$ for some $q \in \left(\max\{\frac{N}{p},1\},\infty\right)$. Here $\Delta_p$ is the $p$-Laplacian and $(-\Delta_p)^s$ denotes the operator
$$(-\Delta_p)^s u (x)=\int_{\mathbb{R}^N}|u(x)-u(y)|^{p-2}(u(x)-u(y))K_{s,p}(x,y)dy,$$
where the kernel $K_{s,p}$ is a function that is comparable to the map $(x,y)\mapsto \frac{1}{|x-y|^{N+sp}}$. The reader may refer to Section \ref{F-s1} for the properties of the kernel $K_{s,p}$ in detail. We first discuss the existence and properties of the eigenvalues and eigenfunctions, which are summarised in the following theorem.
\begin{theorem}
    For the problem \eqref{F} with potential $V\in L^q(\Omega)$, the following statements hold.
    \begin{enumerate}[(i)]
        \item There exists a smallest eigenvalue $\lambda_1(V)\in \mathbb{R}$ for \eqref{F}.
        \item For any eigenvalue $\lambda$ of \eqref{F}, the associated eigenfunctions are bounded in $\mathbb{R}^N$.
        \item For any eigenfunction $u$ associated with $\lambda_1(V)$, either $u>0$ or $u<0$ in $\Omega$. Moreover, eigenfunctions associated with eigenvalues higher than $\lambda_1(V)$ change sign in $\Omega$.
        \item The first eigenvalue $\lambda_1(V)$ is simple and isolated.
    \end{enumerate}
\end{theorem}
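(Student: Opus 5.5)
Part (i) is variational. On $X=W^{1,p}_0(\Omega)$ (functions extended by zero outside $\Omega$), set
$$\mathcal{E}_V(u)=\int_\Omega|\nabla u|^p\,dx+\iint_{\mathbb{R}^{2N}}|u(x)-u(y)|^pK_{s,p}(x,y)\,dx\,dy+\int_\Omega V|u|^p\,dx .$$
The nonlocal term is controlled by the gradient term, since $W^{1,p}_0\hookrightarrow W^{s,p}$ and $K_{s,p}$ is comparable to $|x-y|^{-N-sp}$. We minimize $\mathcal{E}_V$ over the constraint set $\{\|u\|_{L^p(\Omega)}=1\}$.

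The key estimate controls the potential term. Since $q>\max\{N/p,1\}$, the exponent $pq'$ is strictly below the Sobolev exponent $p^*$ (any finite exponent when $p\ge N$). Interpolating $L^{pq'}$ between $L^p$ and $L^{p^*}$ gives, for every $\varepsilon>0$,
$$\Big|\int_\Omega V|u|^p\Big|\le \|V\|_q\|u\|_{pq'}^p\le \varepsilon\|\nabla u\|_p^p+C(\varepsilon,\|V\|_q)\|u\|_p^p .$$
It follows that $\mathcal{E}_V$ is bounded below and coercive on the constraint set. Reflexivity gives weak compactness of minimizing sequences. The compact embedding $X\hookrightarrow L^{pq'}$ makes the $V$-term weakly continuous, and weak lower semicontinuity of the seminorms then yields a minimizer $u$. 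Lagrange multipliers turn $u$ into a weak solution of \eqref{F} with $\lambda_1(V)=\mathcal{E}_V(u)$. Testing any eigenpair $(\lambda,v)$ with $v$ shows $\lambda=\mathcal{E}_V(v)/\|v\|_p^p\ge\lambda_1(V)$, so this eigenvalue is the smallest.

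For (ii) I would run a De Giorgi or Moser iteration. Test with $(u-k)_+$, or with powers $|u|^{\beta-1}u$ truncated. The nonlocal term has the correct sign thanks to the algebraic inequality for $|a-b|^{p-2}(a-b)(g(a)-g(b))$ with $g$ monotone. The term $(\lambda-V)|u|^{p-2}u$ is absorbed using $V\in L^q$ with $q>N/p$, which is exactly the condition under which the Moser iteration closes. When $p>N$ boundedness is immediate from Morrey's embedding.

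For (iii), note that $|u|$ is also a minimizer because $\mathcal{E}_V(|u|)\le\mathcal{E}_V(u)$, so we may assume $u\ge0$. Positivity comes from a strong minimum principle. Since $u\in L^\infty$, $u$ satisfies
$$-\Delta_p u+(-\Delta_p)^s u+\big(\|V\|_\infty\text{-type}\big)\ldots\ge0,$$
but $V$ is only in $L^q$, so I would instead use the weak Harnack inequality for mixed operators with an $L^q$ zero-order coefficient, $q>N/p$. Alternatively, a logarithmic estimate shows that a nonnegative nontrivial supersolution cannot vanish on a set of positive measure, which together with local Harnack gives $u>0$. For the sign-change claim, let $v\ge0$ be an eigenfunction for $\lambda>\lambda_1(V)$ and let $u>0$ be a first eigenfunction. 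I would apply the discrete Picone inequality (both local and nonlocal versions) to $u^p/(v+\epsilon)^{p-1}$, obtaining
$$0\le(\lambda_1(V)-\lambda)\int_\Omega u^p\,dx+o(1),$$
a contradiction. Boundedness from (ii) justifies using these test functions.

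For (iv), simplicity follows from strict convexity along the $p$-curve. Given positive minimizers $u,v$ normalized in $L^p$, set $w=\big((u^p+v^p)/2\big)^{1/p}$. The hidden convexity inequalities for the gradient term and for the Gagliardo term give $\mathcal{E}_V(w)\le\frac12(\mathcal{E}_V(u)+\mathcal{E}_V(v))$, and the $V$-term is exactly linear in $|u|^p$. Equality forces $\nabla(u/v)=0$, hence $u=v$. Isolation is where I expect the main work. Suppose $\lambda_k\downarrow\lambda_1(V)$ with normalized eigenfunctions $u_k$. The a priori estimate of (i) bounds $u_k$ in $X$, and a subsequence converges strongly, using the $(S_+)$ property of the operator, to a first eigenfunction, which I may take to be positive. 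Since each $u_k$ changes sign by (iii), the negative parts $u_k^-$ are nonzero. Testing with $u_k^-$ and using the interpolation bound yields
$$\|\nabla u_k^-\|_p^p\le C\|u_k^-\|_p^p\le C|\{u_k<0\}|^{\delta}\|u_k^-\|_{pq'}^p ,$$
which forces $|\{u_k<0\}|\ge c>0$. This contradicts $u_k\to u>0$ in measure, by Egorov's theorem. The delicate part is handling the nonlocal coupling when testing with $u_k^-$, where the cross terms have a favorable sign.
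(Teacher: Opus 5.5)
Your outline follows the paper's proof closely in every part. The steps are: minimizing the Rayleigh quotient with the absorption bound for $\int_\Omega V|u|^p$ (the paper's Lemma \ref{F-L2}), truncation iteration for (ii), a logarithmic estimate for positivity (Lemma \ref{F-E.L1}), Picone with $u^p/(v+\epsilon)^{p-1}$, hidden convexity of $w=((u^p+v^p)/2)^{1/p}$, and, for isolation, a uniform lower bound on $|\{u_k<0\}|$ plus convergence to a positive first eigenfunction.

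The step that does not hold as written is the reduction in (iii). From $\mathcal{E}_V(|u|)\le\mathcal{E}_V(u)$ you only learn that $|u|$ is a first eigenfunction, hence $|u|>0$ a.e. Saying ``so we may assume $u\ge0$'' does not rule out a first eigenfunction that is positive on one set of positive measure and negative on another. Statement (iii) is about \emph{every} first eigenfunction, and you rely on that later. Your simplicity argument compares only positive minimizers, and in the isolation step you take the limit eigenfunction to be positive. A hypothetical sign-changing first eigenfunction whose modulus is the positive one is consistent with everything you actually prove, and it would break both arguments.

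The missing ingredient is the equality case, which is exactly how the paper argues in Theorem \ref{F-E.T3}.
\begin{itemize}
\item The local terms of $\mathcal{E}_V(|u|)$ and $\mathcal{E}_V(u)$ coincide, because $|\nabla|u||=|\nabla u|$ a.e.
\item On the other hand, $\bigl||u(x)|-|u(y)|\bigr|<|u(x)-u(y)|$ whenever $u(x)u(y)<0$.
\item If $u$ takes both signs on sets of positive measure, these pairs form a set of positive measure in $\mathbb{R}^{2N}$, on which $K_{s,p}>0$.
\item Hence $\mathcal{E}_V(|u|)<\mathcal{E}_V(u)=\lambda_1(V)$ with $\||u|\|_{L^p(\Omega)}=1$, contradicting minimality.
\end{itemize}
So $u$ or $-u$ is nonnegative, and only then does the logarithmic estimate give $u>0$ or $u<0$ a.e. Alternatively, $\mathcal{E}_V(u)\ge\mathcal{E}_V(u_+)+\mathcal{E}_V(u_-)$ shows that $u_+$ and $u_-$ would both be nonnegative minimizers, hence both positive a.e., which is impossible.

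A smaller point: in the Picone step your $o(1)$ requires $v>0$ a.e., not merely $v\ge0$. Otherwise the limit only produces $\int_{\{v>0\}}(\lambda-V)u^p\,dx$ and no contradiction follows. Positivity of $v$ comes from the same logarithmic lemma applied to $v$ with the potential $V-\lambda\in L^q(\Omega)$, and you should state this explicitly.
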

Now, we state our first main result obtained in the paper.
\begin{theorem}\label{F-T1}
    Let $\mathcal{A}\subset L^q(\Omega)$ be bounded, closed, and convex. Then, there exists a minimizer $\underline{V}$ of the functional $\lambda_1(\cdot)$ in $\mathcal{A}$. Also, if $\underline{u}$ is the eigenfunction corresponding to $\lambda_1(\underline{V})$ of the problem \eqref{F} with potential $\underline{V}$, satisfying $\|\underline{u}\|_{L^p(\Omega)}=1$, then $\underline{V}$ is the unique minimizer of the functional
    $$V \mapsto \int_{\Omega}V|\underline{u}|dx, \ V\in \mathcal{A}.$$
\end{theorem}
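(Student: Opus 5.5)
The plan is to use the variational characterization of the first eigenvalue,
$$\lambda_1(V)=\min_{u\in X,\ \|u\|_{L^p(\Omega)}=1}\Big\{\int_\Omega|\nabla u|^p\,dx+\iint_{\mathbb{R}^{2N}}|u(x)-u(y)|^pK_{s,p}(x,y)\,dx\,dy+\int_\Omega V|u|^p\,dx\Big\},$$
where $X$ is the natural energy space of functions vanishing outside $\Omega$. I take this characterization to be part of how $\lambda_1(V)$ is constructed in the preceding theorem. Write $\mathcal{E}(u)$ for the first two (potential-free) terms and $Q_V(u)=\mathcal{E}(u)+\int_\Omega V|u|^p$.

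I read the functional in the statement as $V\mapsto\int_\Omega V|\underline{u}|^p\,dx$. This is the quantity that arises naturally, and the argument below is written for it.

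\emph{Step 1 (existence).} Since $1<q<\infty$, $L^q(\Omega)$ is reflexive. A bounded, closed, convex set $\mathcal{A}$ is therefore weakly sequentially compact. So it suffices to show that $\lambda_1$ is bounded below on $\mathcal{A}$ and weakly sequentially lower semicontinuous.

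Let $V_n\rightharpoonup V$ in $L^q(\Omega)$, and let $u_n$ be normalized first eigenfunctions with $\lambda_1(V_n)=Q_{V_n}(u_n)$. Because $q>\max\{N/p,1\}$, we have $pq'<p^*$, or $pq'<\infty$ when $p\ge N$. Hölder's inequality, interpolation between $L^p$ and $L^{p^*}$, and Young's inequality then give
$$\Big|\int_\Omega V|u|^p\Big|\le \|V\|_{q}\|u\|_{pq'}^p\le \varepsilon\|\nabla u\|_p^p+C_\varepsilon\|V\|_q^{\theta}\|u\|_p^p .$$
With $\sup_n\|V_n\|_q<\infty$, this yields two things. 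First, $\lambda_1$ is bounded below on $\mathcal{A}$. Second, testing with a fixed function shows $\lambda_1(V_n)$ is bounded above, and hence $u_n$ is bounded in $X$.

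Up to a subsequence, $u_n\rightharpoonup u$ in $X$ and $u_n\to u$ strongly in $L^{pq'}(\Omega)$ by compact embedding. Consequently $|u_n|^p\to|u|^p$ strongly in $L^{q'}$ and $\|u\|_p=1$. Weak times strong convergence gives $\int V_n|u_n|^p\to\int V|u|^p$, and weak lower semicontinuity of $\mathcal{E}$ then gives
$$\liminf_n\lambda_1(V_n)\ge Q_V(u)\ge\lambda_1(V).$$
Applying this to a minimizing sequence in $\mathcal{A}$ produces $\underline{V}$. I expect this compactness/continuity step to be the main technical point. The key ingredient is the exponent condition $q>N/p$, which puts $pq'$ strictly below the critical exponent.

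\emph{Step 2 ($\underline{V}$ minimizes the linear functional).} Fix $V\in\mathcal{A}$ and set $V_t=(1-t)\underline{V}+tV\in\mathcal{A}$ for $t\in(0,1]$, using convexity. Using $\underline{u}$ as a competitor in the characterization, together with minimality of $\underline{V}$, gives
$$\lambda_1(\underline{V})\le\lambda_1(V_t)\le Q_{V_t}(\underline{u})=\lambda_1(\underline{V})+t\int_\Omega(V-\underline{V})|\underline{u}|^p\,dx .$$
Hence $\int_\Omega V|\underline{u}|^p\ge\int_\Omega\underline{V}|\underline{u}|^p$.

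\emph{Step 3 (uniqueness).} Suppose equality $\int_\Omega V|\underline{u}|^p=\int_\Omega\underline{V}|\underline{u}|^p$ holds for some $V\in\mathcal{A}$. Then the chain in Step 2 collapses to equalities, so $\lambda_1(V_t)=Q_{V_t}(\underline{u})$ for every $t\in(0,1]$. Thus $\underline{u}$ minimizes the Rayleigh quotient for $V_t$, and by the Euler–Lagrange equation it is an eigenfunction of \eqref{F} with potential $V_t$ and eigenvalue $\lambda_1(V_t)=\lambda_1(\underline{V})$.

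Subtracting the weak equations for $\underline{V}$ and for $V_t$ gives
$$t\int_\Omega(V-\underline{V})|\underline{u}|^{p-2}\underline{u}\,\varphi\,dx=0\quad\text{for all test functions }\varphi,$$
so $(V-\underline{V})|\underline{u}|^{p-2}\underline{u}=0$ a.e. in $\Omega$. By part (iii) of the preceding theorem, $\underline{u}$ does not vanish in $\Omega$, and therefore $V=\underline{V}$ a.e. This shows that $\underline{V}$ is the unique minimizer of $V\mapsto\int_\Omega V|\underline{u}|^p\,dx$ over $\mathcal{A}$.
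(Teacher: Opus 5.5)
Your proposal is correct and follows essentially the paper's route. You take a minimizing sequence, use weak compactness in $L^q(\Omega)$, and rely on a bound of the form $\bigl|\int_\Omega V|u|^p\,dx\bigr|\le\delta\|u\|_{\mathbb{X}_0^{s,p}(\Omega)}^p+C(\delta)\|u\|_{L^p(\Omega)}^p$, which the paper obtains by a compactness/contradiction argument in Lemma~\ref{F-L2} rather than by your interpolation. You then pass to the limit via the compact embedding into $L^{\frac{pq}{q-1}}(\Omega)$, and finally test with $\underline{u}$ and subtract the weak formulations using $\underline{u}>0$. Your reading of the functional as $V\mapsto\int_\Omega V|\underline{u}|^p\,dx$ is exactly what the paper proves. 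The convex combinations $V_t$ are unnecessary: taking $t=1$, i.e.\ comparing $\underline{V}$ directly with $V\in\mathcal{A}$, is the paper's argument.
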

We also discuss the following result on the maximizers of the first eigenvalue to \eqref{F}.
\begin{theorem}\label{F-T2}
    Let $\mathcal{A}\subset L^q(\Omega)$ be bounded, closed, and convex. Then, the maximum of $\lambda_1(\cdot)$ in $\mathcal{A}$ is achieved by a unique maximizer $\overline{V}\in \mathcal{A}$.
\end{theorem}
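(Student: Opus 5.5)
The plan is to use the variational characterization of $\lambda_1(V)$ and exploit that it is an infimum of functionals which are affine in $V$. For $u\in W^{1,p}_0(\Omega)$ (extended by zero outside $\Omega$) with $\|u\|_{L^p(\Omega)}=1$, set
\begin{equation*}
E(u,V)=\int_\Omega|\nabla u|^p\,dx+\int_{\mathbb{R}^N}\int_{\mathbb{R}^N}|u(x)-u(y)|^pK_{s,p}(x,y)\,dx\,dy+\int_\Omega V|u|^p\,dx .
\end{equation*}
Then $\lambda_1(V)=\inf_u E(u,V)$, and the infimum is attained exactly at the first eigenfunctions; I take this from the proof of part (i) of the first theorem above. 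Since $q>\max\{N/p,1\}$, we have $pq'<p^*$ (or $W^{1,p}\hookrightarrow L^\infty$ when $p>N$). Hence $|u|^p\in L^{q'}(\Omega)$, so the term $\int_\Omega V|u|^p\,dx$ is finite for every $u\in W^{1,p}_0(\Omega)$ and $V\in L^q(\Omega)$. In particular, for each fixed $u$ the map $V\mapsto E(u,V)$ is affine and weakly continuous on $L^q(\Omega)$.

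\textbf{Existence.} By H\"older and Sobolev/interpolation inequalities, $\lambda_1$ is finite and bounded above on the bounded set $\mathcal{A}$; boundedness above follows from testing with any fixed $u$. Take a maximizing sequence $V_n\in\mathcal{A}$. Since $L^q(\Omega)$ is reflexive and $\mathcal{A}$ is bounded, a subsequence satisfies $V_n\rightharpoonup \overline V$ weakly in $L^q(\Omega)$. As $\mathcal{A}$ is closed and convex, it is weakly closed, so $\overline V\in\mathcal{A}$. Now let $\overline u$ be a normalized first eigenfunction for $\overline V$. Using it as a competitor for $V_n$ gives
\begin{equation*}
\lambda_1(V_n)\le E(\overline u,V_n)\longrightarrow E(\overline u,\overline V)=\lambda_1(\overline V),
\end{equation*}
because $|\overline u|^p\in L^{q'}(\Omega)$. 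Hence $\sup_{\mathcal{A}}\lambda_1=\limsup_n\lambda_1(V_n)\le\lambda_1(\overline V)$, and $\overline V$ is a maximizer. In effect, $\lambda_1$ is weakly upper semicontinuous, being an infimum of weakly continuous functions.

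\textbf{Uniqueness.} Suppose $V_1,V_2\in\mathcal{A}$ are both maximizers with common value $M$. Put $W=\tfrac12(V_1+V_2)$, which lies in $\mathcal{A}$ by convexity, and let $w$ be its normalized positive first eigenfunction from part (iii). Affinity of $E$ in $V$ gives
\begin{equation*}
M\ge\lambda_1(W)=E(w,W)=\tfrac12E(w,V_1)+\tfrac12E(w,V_2)\ge\tfrac12\lambda_1(V_1)+\tfrac12\lambda_1(V_2)=M .
\end{equation*}
So equality holds throughout, and $E(w,V_i)=\lambda_1(V_i)$ for $i=1,2$. Thus $w$ minimizes the Rayleigh quotient for both potentials and is therefore a first eigenfunction for both. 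Subtracting the two weak eigenvalue equations (the local and nonlocal parts cancel), and using $\lambda_1(V_1)=\lambda_1(V_2)=M$, yields $(V_1-V_2)w^{p-1}=0$ a.e. in $\Omega$. Since $w>0$ in $\Omega$ by part (iii), $V_1=V_2$ a.e.

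\textbf{Main obstacle.} The delicate step is the claim that any minimizer of $E(\cdot,V)$ on the unit $L^p$-sphere is a weak eigenfunction, i.e.\ a Lagrange multiplier argument. This requires differentiability of $u\mapsto\int_\Omega V|u|^p\,dx$ along $W^{1,p}_0$ directions. That in turn rests on $pq'<p^*$, which is exactly where the assumption $q>N/p$ is used, together with the strict positivity of first eigenfunctions. If part (iii) only gives $w\ge0$ with $w\neq0$, I would first invoke the strong minimum principle for the mixed operator to get $w>0$ a.e.; otherwise the uniqueness would only hold on $\{w>0\}$.
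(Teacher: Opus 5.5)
Your proof is correct and follows the same route as the paper. For existence you combine weak compactness of $\mathcal A$ with weak upper semicontinuity of $\lambda_1$, viewed as an infimum of weakly continuous affine maps. For uniqueness you use the midpoint/concavity argument together with positivity of the first eigenfunction. The differences are minor: you test with the attained eigenfunction $\overline u$ instead of the near-minimizers $v_j$. You also read off from the equality chain that $w$ minimizes both Rayleigh quotients, which avoids the paper's detour through simplicity (Theorem \ref{F-E.T4}).
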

We have also investigated the properties of eigenvalues and eigenfunctions to \eqref{F} and characterized the optimizers on certain subsets of $L^q(\Omega)$. 

The paper is structured as follows. In Section \ref{F-s1}, we introduce some of the concepts and definitions required to study the eigenvalues to \eqref{F}. In Section \ref{F-s2}, we prove the existence and some properties of the eigenvalues and associated eigenfunctions to \eqref{F} for any fixed potential $V\in L^q(\Omega)$. In Section \ref{F-s3}, we prove the main results Theorem \ref{F-T1} and Theorem \ref{F-T2} along with the study of certain characteristics of $\lambda_1(\cdot)$. Then, in Section \ref{F-s4}, we analyze the properties of optimizers in a set of rearrangements of a function $V\in L^q(\Omega)$ by equimeasure, and its weak closure in $L^q(\Omega)$. Finally, we conclude the paper with a study of the optimization problem of $\lambda_1(\cdot)$ in a closed unit ball in $L^q(\Omega)$.

\section{Preliminaries and notations}\label{F-s1} 
\noindent In this section, we discuss a few definitions, notations, and properties that are crucial for studying the problem \eqref{F}. We begin by presenting the definition of the spaces $W_0^{1,p}(\Omega)$ and $W_0^{1,p}(\Omega)$ (see \cite{DD2012}). For an open set $\Omega\subset \mathbb{R}^N$ with boundary $\partial \Omega$ and $0<s<1\leq p<\infty$, we have
\begin{align*}
    W_0^{1,p}(\Omega)&=\{u\in W^{1,p}(\Omega): u=0 \text{ on }\partial\Omega\},\\
    \text{ and }W_0^{s,p}(\Omega)&=\{u\in W^{s,p}(\Omega): u=0 \text{ on }\mathbb{R}^N \setminus \Omega\}.
\end{align*}
Here, the spaces $W^{1,p}(\Omega)$ and $W^{s,p}(\Omega)$ denote the Sobolev and fractional Sobolev space respectively. The reader may refer to \cite{DD2012, NPV2012, B2011, L2023} for the definition and properties of $W^{1,p}(\Omega)$ and $W^{s,p}(\Omega)$. We discuss a theorem that plays an important role later in the characterisation of our solution space.
\begin{theorem}\label{F-SFE}(See \cite[Lemma 2.1]{BDD2022})
Let $\Omega\subset \mathbb{R}^N$ be a bounded, Lipschitz domain and $0< s<1<p<\infty$. Then, for any $u\in W_0^{1,p}(\Omega)$, the following inequality holds.
\begin{equation*}
\int_{\mathbb{R}^N}\int_{\mathbb{R}^N}\frac{| \tilde{u}(x)-\tilde{u}(y)|^p}{| x-y |^{N+ps}} dxdy \leq C \int_\Omega |\nabla u|^p dx.
\end{equation*}
Here, $C= C(N,s,p,\Omega)>0$ is a constant and $\tilde{u}$ is the function obtained by extending $u$ by zero to $\mathbb{R}^N \setminus \Omega$.
\end{theorem}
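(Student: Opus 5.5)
The plan is to reduce the estimate to whole-space quantities and then split the Gagliardo double integral into a near-diagonal part and a far part. Since $\Omega$ is Lipschitz, extension by zero maps $W_0^{1,p}(\Omega)$ into $W^{1,p}(\mathbb{R}^N)$. For $u\in C_c^\infty(\Omega)$ the zero extension $\tilde u$ lies in $C_c^\infty(\mathbb{R}^N)$ and satisfies $\nabla\tilde u=\widetilde{\nabla u}$, and the general case will follow by density, using Fatou's lemma on the left-hand side. For $\tilde u\in W^{1,p}(\mathbb{R}^N)$ I split the double integral into the region $|x-y|<1$ and the region $|x-y|\ge 1$.

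\emph{Near region.} Write $\tilde u(x)-\tilde u(y)=\int_0^1\nabla\tilde u(y+t(x-y))\cdot(x-y)\,dt$ and apply Jensen's inequality to get $|\tilde u(x)-\tilde u(y)|^p\le |x-y|^p\int_0^1|\nabla \tilde u(y+th)|^p dt$ with $h=x-y$. Changing variables $(x,y)\mapsto(h,y)$ and integrating first in $y$, by translation invariance, gives the bound
\[
\|\nabla u\|_{L^p(\Omega)}^p\int_{|h|<1}|h|^{p-N-sp}\,dh .
\]
This integral is finite because $p-sp>0$.

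\emph{Far region.} Use $|a-b|^p\le 2^{p-1}(|a|^p+|b|^p)$ and symmetry to bound this part by
\[
2^p\|\tilde u\|_{L^p(\mathbb{R}^N)}^p\int_{|h|\ge1}|h|^{-N-sp}\,dh .
\]
This integral is finite because $sp>0$.

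Combining the two regions, the double integral is at most $C_1\big(\|\nabla u\|_{L^p(\Omega)}^p+\|u\|_{L^p(\Omega)}^p\big)$.

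The last step, which is the only place where boundedness of $\Omega$ enters, is to absorb $\|u\|_{L^p(\Omega)}^p$ using the Poincaré inequality $\|u\|_{L^p(\Omega)}^p\le C_P\|\nabla u\|_{L^p(\Omega)}^p$, valid on $W_0^{1,p}(\Omega)$ for bounded $\Omega$. This gives the claim with $C=C_1(1+C_P)$, depending on $N,s,p,\Omega$.

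I expect the main subtlety to be justifying the pointwise gradient formula and the identification $\nabla\tilde u=\widetilde{\nabla u}$. Working with $C_c^\infty(\Omega)$ and passing to the limit handles this cleanly. Fatou's lemma applies because along a sequence $u_k\to u$ in $W^{1,p}$ we have $\tilde u_k\to\tilde u$ a.e. after passing to a subsequence. Lipschitz regularity is not really needed with this definition of $W_0^{1,p}$ as the closure of $C_c^\infty$; it matters only if $W_0^{1,p}$ is defined by a vanishing trace, where one first needs density of $C_c^\infty(\Omega)$.
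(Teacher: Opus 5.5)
Your argument is correct and complete. The paper itself gives no proof of this statement; it simply imports it from \cite[Lemma 2.1]{BDD2022}. What you wrote is the standard argument behind such citations: the near/far splitting of the Gagliardo seminorm that proves the continuous embedding $W^{1,p}(\mathbb{R}^N)\hookrightarrow W^{s,p}(\mathbb{R}^N)$ (as in \cite[Proposition 2.2]{NPV2012}), applied to the zero extension $\tilde u$, followed by the Poincar\'e inequality to remove the $L^p$ term.

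The individual steps all check out:
\begin{enumerate}[(i)]
\item The exponent $p-sp-N>-N$ makes the near radial integral finite, and $-N-sp<-N$ makes the far one finite.
\item Tonelli and translation invariance give $\|\nabla\tilde u\|_{L^p(\mathbb{R}^N)}^p=\|\nabla u\|_{L^p(\Omega)}^p$.
\item A.e.\ convergence of $\tilde u_k$ on $\mathbb{R}^N$ gives a.e.\ convergence of $\tilde u_k(x)-\tilde u_k(y)$ on $\mathbb{R}^N\times\mathbb{R}^N$ off the null set $(E\times\mathbb{R}^N)\cup(\mathbb{R}^N\times E)$. This is what Fatou needs.
\end{enumerate}

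Compared with the bare citation, your version gains transparency in two ways. First, it shows that $C$ depends on $\Omega$ only through the Poincar\'e constant. Boundedness, or some Poincar\'e-type control, is genuinely necessary here: under dilations the left side scales like $\lambda^{N-sp}$ and the right side like $\lambda^{N-p}$, so the inequality fails on $\mathbb{R}^N$ without a lower-order term. Second, it isolates the one place the Lipschitz hypothesis is used. The paper defines $W_0^{1,p}(\Omega)$ by vanishing on $\partial\Omega$, and Lipschitz regularity is what identifies this with the closure of $C_c^\infty(\Omega)$, so that your density step applies.

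The only inconsistency is cosmetic. Your opening sentence attributes the zero-extension property to Lipschitz regularity, but under the closure definition it holds for every open set, as your last paragraph itself says.
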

\par Now, we state the assumptions on the Kernel $K_{s,p}$. The function $K_{s,p}:\mathbb{R}^N \times \mathbb{R}^N \rightarrow \mathbb{R}$ is taken to be a measurable function satisfying the following.
\begin{enumerate}
    \item $K_{s,p}$ is translation invariant. i.e., $K_{s,p}(x+z,y+z)=K_{s,p}(x,y)$ for all $x,y,z \in \mathbb{R}^N$.
    \item $K_{s,p}$ is symmetric. i.e., $K_{s,p}(x,y)=K_{s,p}(y,x)$ for all $x,y \in \mathbb{R}^N$.
    \item There exists a constant $\Lambda>0$ such that
    \begin{equation}\label{F-Kernel}
        \frac{1}{\Lambda|x-y|^{N+sp}}\leq K_{s,p}(x,y)\leq \frac{\Lambda}{|x-y|^{N+sp}}, \text{ for all } x,y \in \mathbb{R}^N, \ x \neq y.
    \end{equation}
    \item $K_{s,p}(x,\cdot)$ is continuous in $\mathbb{R}^N \setminus \{x\}$.
\end{enumerate}
\par Finally, we move to the definition of the solution space for the problem \eqref{F}. From now on, we assume $\Omega \subset \mathbb{R}^N$ is a bounded domain with Lipschitz boundary and $0<s< 1\leq p<\infty$. Then, the space $\mathbb{X}_0^{s,p}(\Omega)$ is defined as the closure of $C_c^\infty(\Omega)$ under the norm $\|\cdot\|_{\mathbb{X}_0^{s,p}(\Omega)}$ given by
$$\|u\|_{\mathbb{X}_0^{s,p}(\Omega)}^p=\|\nabla u\|_{L^p(\Omega)}^p+\int_{\mathbb{R}^N}\int_{\mathbb{R}^N}|u(x)-u(y)|^pK_{s,p}(x,y)dydx.$$
Observe that by \eqref{F-Kernel}, the norm $\|\cdot\|_{\mathbb{X}_0^{s,p}(\Omega)}$ is equivalent to the norm $\|\cdot\|_{\mathbb{X}_0^{s,p}(\Omega)}'$, which is defined as
$$\|u\|_{\mathbb{X}_0^{s,p}(\Omega)}'=\left(\|\nabla u\|_{L^p(\Omega)}^p+\int_{\mathbb{R}^N}\int_{\mathbb{R}^N}\frac{|u(x)-u(y)|^p}{|x-y|^{N+sp}}dydx\right)^\frac{1}{p}.$$
 We have the following theorem.
\begin{theorem}\label{F-RSB}(See \cite[Theorem 7]{LG2025})
    Let $0<s<1$. The space $\mathbb{X}_0^{s,p}(\Omega)$ is a seperable Banach space for $1\leq p<\infty$. Also, $\mathbb{X}_0^{s,p}(\Omega)$ is reflexive for $1<p<\infty$.
\end{theorem}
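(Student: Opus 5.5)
The plan is to realize $\mathbb{X}_0^{s,p}(\Omega)$ isometrically as a closed subspace of a product of Lebesgue spaces. Separability and reflexivity then follow from the standard facts that closed subspaces of separable (respectively reflexive) Banach spaces inherit these properties.

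Concretely, define
$$T:\mathbb{X}_0^{s,p}(\Omega)\to L^p(\Omega;\mathbb{R}^N)\times L^p(\mathbb{R}^N\times\mathbb{R}^N),\qquad Tu=\Big(\nabla u,\ (u(x)-u(y))K_{s,p}(x,y)^{1/p}\Big).$$
Equip the target with the product norm $\|(F,G)\|=(\|F\|_{L^p}^p+\|G\|_{L^p}^p)^{1/p}$. By the very definition of $\|\cdot\|_{\mathbb{X}_0^{s,p}(\Omega)}$, the map $T$ is a linear isometry. The target is separable for $1\le p<\infty$, and it is reflexive for $1<p<\infty$, being a finite product of such $L^p$ spaces.

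The substantive step, and the one I expect to need the most care, is completeness. Since $\mathbb{X}_0^{s,p}(\Omega)$ is defined as a closure of $C_c^\infty(\Omega)$, I must check that this closure consists of genuine functions and that $T$ has closed range. I would argue as follows.

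Take a Cauchy sequence $(u_n)$ in $\mathbb{X}_0^{s,p}(\Omega)$. Then $(\nabla u_n)$ is Cauchy in $L^p(\Omega)$. By the Poincaré inequality on the bounded set $\Omega$, $(u_n)$ is Cauchy in $W_0^{1,p}(\Omega)$, so $u_n\to u$ in $W_0^{1,p}(\Omega)$; extended by zero, $u_n\to u$ in $L^p(\mathbb{R}^N)$. Passing to a subsequence, $u_n\to u$ a.e. in $\mathbb{R}^N$. Hence $(u_n(x)-u_n(y))K_{s,p}^{1/p}\to (u(x)-u(y))K_{s,p}^{1/p}$ a.e. in $\mathbb{R}^{2N}$.

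On the other hand, these functions are Cauchy in $L^p(\mathbb{R}^{2N})$, so they converge in $L^p$ to some $G$. Comparing the a.e. and $L^p$ limits along a further subsequence identifies $G=(u(x)-u(y))K_{s,p}^{1/p}$. Fatou's lemma shows in addition that the Gagliardo seminorm of $u$ is finite. Therefore $u_n\to u$ in $\|\cdot\|_{\mathbb{X}_0^{s,p}(\Omega)}$ and $Tu$ is the limit of $Tu_n$. This gives completeness and closedness of $T(\mathbb{X}_0^{s,p}(\Omega))$.

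With this in hand, $T(\mathbb{X}_0^{s,p}(\Omega))$ is a closed subspace of a separable space, hence separable, for every $1\le p<\infty$. For $1<p<\infty$ it is a closed subspace of a reflexive space, hence reflexive. Both properties transfer back to $\mathbb{X}_0^{s,p}(\Omega)$ through the isometric isomorphism $T$.

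As a cross-check for $1<p<\infty$, Theorem \ref{F-SFE} together with \eqref{F-Kernel} shows that $\|\cdot\|_{\mathbb{X}_0^{s,p}(\Omega)}$ is equivalent to $\|\nabla\cdot\|_{L^p(\Omega)}$ on $C_c^\infty(\Omega)$. Hence $\mathbb{X}_0^{s,p}(\Omega)=W_0^{1,p}(\Omega)$ with an equivalent norm, and the conclusions also follow from the known properties of $W_0^{1,p}(\Omega)$. The isometry argument above has the advantage of also covering $p=1$ without invoking that embedding.
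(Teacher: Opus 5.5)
Your argument is correct. The paper gives no proof of this statement; it quotes it from \cite[Theorem 7]{LG2025}, so there is no internal argument to match. Your route is the standard one, the same way \cite{B2011} treats $W^{1,p}(\Omega)$, and it is self-contained. It consists of the linear isometry $Tu=(\nabla u,(u(x)-u(y))K_{s,p}^{1/p})$ into $L^p(\Omega;\mathbb{R}^N)\times L^p(\mathbb{R}^N\times\mathbb{R}^N)$, completeness via Poincar\'e plus a.e.\ identification of the $L^p$ limit, and inheritance of separability and reflexivity by closed subspaces.

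The alternative the paper itself makes available right afterwards is your ``cross-check''. By Theorem~\ref{F-SFE}, \eqref{F-Kernel} and Poincar\'e, the norm is equivalent to $\|\nabla\cdot\|_{L^p(\Omega)}$ on $C_c^\infty(\Omega)$. Hence $\mathbb{X}_0^{s,p}(\Omega)=W_0^{1,p}(\Omega)$ as in \eqref{F-SS}, and everything is inherited from $W_0^{1,p}(\Omega)$. That route is shorter, but as stated in the paper it relies on the Lipschitz hypothesis and on $p>1$ in Theorem~\ref{F-SFE}. Your isometry argument needs only boundedness of $\Omega$ and also covers $p=1$.

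One small tidy-up concerns the completeness step. There you apply Poincar\'e to $u_n\in\mathbb{X}_0^{s,p}(\Omega)$ before you know that elements of the closure lie in $W_0^{1,p}(\Omega)$. Instead, run the argument for Cauchy sequences taken from $C_c^\infty(\Omega)$. This suffices by density: replace a Cauchy sequence $(x_n)$ by $\phi_n\in C_c^\infty(\Omega)$ with $\|x_n-\phi_n\|_{\mathbb{X}_0^{s,p}(\Omega)}<1/n$. It also shows, at the same time, that every element of the closure is a genuine function in $W_0^{1,p}(\Omega)$, extended by zero, with finite Gagliardo seminorm. That is exactly what is needed for $T$ to be defined on the whole space.
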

As a consequence of Theorem \ref{F-SFE} and \cite[Corollary 9.19]{B2011}, we have
\begin{equation}\label{F-SS}
    \mathbb{X}_0^{s,p}(\Omega) = \{ u:\mathbb{R}^N \rightarrow\mathbb{R} \vert u\restriction_\Omega \in W_0^{1,p}(\Omega), \ u =0 \text{ in } \mathbb{R}^N \setminus \Omega \}.
\end{equation}
and the norm $\|\cdot\|_{s,p,\Omega}=\|\nabla (\cdot)\|_{L^p(\Omega)}$ is a norm equivalent to $\|\cdot\|_{\mathbb{X}_0^{s,p}(\Omega)}$ on $\mathbb{X}_0^{s,p}(\Omega)$. 
One can find in pp. 290 of \cite{B2011} that the compact embeddings of $W^{1,p}(\Omega)$ given in \cite[Theorem 9.16]{B2011} holds true in $W_0^{1,p}(\Omega)$ for any bounded domain $\Omega$. Therefore, from \eqref{F-SS} and the embeddings in \cite[Corollary 9.14, Theorem 9.16]{B2011}, the next theorem follows.
\begin{theorem}\label{F-ET}
    Let $\Omega \subset \mathbb{R}^N$ be a bounded domain with Lipschitz boundary and $0<s<1\leq p<\infty$. Then, the following continuous and compact embeddings hold.
    \begin{itemize}
        \item For $p<N$, the embedding $\mathbb{X}_0^{s,p}(\Omega)\hookrightarrow L^r(\Omega)$ is continuous for all $1\leq r\leq p^*=\frac{Np}{N-p}$ and compact for $1\leq p< p^*$.
        \item For $p=N$, we have the embedding $\mathbb{X}_0^{s,p}(\Omega)\hookrightarrow L^r(\Omega)$ which is both continuous and compact for all $p\leq r<\infty$.
        \item For $p>N$, the space $\mathbb{X}_0^{s,p}(\Omega)$ is continuously embedded in $L^r(\Omega)$ for all $1\leq r\leq \infty$. Also, the embeddings $\mathbb{X}_0^{s,p}(\Omega)\hookrightarrow L^r(\Omega)$ and $\mathbb{X}_0^{s,p}(\Omega) \hookrightarrow C(\overline{\Omega})$ are compact for all $1\leq r\leq\infty$.
    \end{itemize}
\end{theorem}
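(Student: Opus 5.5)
The plan is to reduce everything to the classical Sobolev embeddings for $W_0^{1,p}(\Omega)$ by means of the identification \eqref{F-SS}. Fix $u\in\mathbb{X}_0^{s,p}(\Omega)$. By \eqref{F-SS}, $u|_\Omega\in W_0^{1,p}(\Omega)$ and $u=0$ outside $\Omega$. Moreover, $\|\nabla(\cdot)\|_{L^p(\Omega)}$ is a norm on $\mathbb{X}_0^{s,p}(\Omega)$ equivalent to $\|\cdot\|_{\mathbb{X}_0^{s,p}(\Omega)}$. One inequality is trivial. The other follows from Theorem \ref{F-SFE} together with the kernel bound \eqref{F-Kernel}:
\[
\int_{\mathbb{R}^N}\int_{\mathbb{R}^N}|u(x)-u(y)|^pK_{s,p}(x,y)\,dy\,dx\le \Lambda C\|\nabla u\|_{L^p(\Omega)}^p .
\]
Consequently the restriction map $u\mapsto u|_\Omega$ is a linear isomorphism, with equivalent norms, from $\mathbb{X}_0^{s,p}(\Omega)$ onto $W_0^{1,p}(\Omega)$ equipped with the norm $\|\nabla\cdot\|_{L^p}$. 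By Poincaré's inequality on the bounded set $\Omega$, this norm is in turn equivalent to the full $W^{1,p}(\Omega)$ norm. So every continuity or compactness statement for $W_0^{1,p}(\Omega)\hookrightarrow L^r(\Omega)$ or $C(\overline\Omega)$ transfers verbatim. The reason is that composing a bounded (respectively compact) operator with a bounded isomorphism preserves boundedness (respectively compactness).

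The remaining step is to quote the correct embeddings case by case.
\begin{itemize}
\item For $p<N$: use the Gagliardo--Nirenberg--Sobolev inequality \cite[Corollary 9.14]{B2011} to get $W_0^{1,p}\hookrightarrow L^{p^*}$. Hölder's inequality on the bounded $\Omega$ then gives all $r\le p^*$. Rellich--Kondrachov \cite[Theorem 9.16]{B2011} gives compactness for $1\le r<p^*$. (The statement's "$1\le p<p^*$" should be read as $1\le r<p^*$.)
\item For $p=N$: continuity into $L^r$ for every $r<\infty$ comes from Corollary 9.14. Compactness for $r<\infty$ follows by interpolation. Compactness into $L^1$ (from Rellich) and boundedness into $L^{r'}$ for some $r'>r$ give compactness into $L^r$.
\item For $p>N$: Morrey's embedding into $C^{0,1-N/p}(\overline\Omega)$ gives continuity into $L^\infty$ and hence into all $L^r$. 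Arzelà--Ascoli gives compactness into $C(\overline\Omega)$, and therefore into every $L^r$, $1\le r\le\infty$.
\end{itemize}

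The one delicate point, which I expect to be the main obstacle, is that \cite[Theorem 9.16]{B2011} is stated for $W^{1,p}(\Omega)$ on $C^1$ (or extension) domains. To cover $W_0^{1,p}(\Omega)$ on an arbitrary bounded set, I would extend $u\in W_0^{1,p}(\Omega)$ by zero. This gives an element of $W^{1,p}(\mathbb{R}^N)$ supported in a fixed large ball $B$, with the same gradient norm. Applying the embeddings on the smooth domain $B$ and restricting back to $\Omega$ yields all claims, as noted on p.~290 of \cite{B2011}. Since $\Omega$ is also assumed Lipschitz, an alternative is to use directly the Lipschitz extension operator. Either route suffices, and the zero-extension argument has the advantage of avoiding any regularity of $\partial\Omega$ beyond what \eqref{F-SS} already uses.
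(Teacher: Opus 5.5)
Your proposal is correct and follows the paper's route. The paper's argument is just the identification \eqref{F-SS}, with $\|\nabla\cdot\|_{L^p(\Omega)}$ as an equivalent norm via Theorem \ref{F-SFE} and \cite[Corollary 9.19]{B2011}, combined with \cite[Corollary 9.14, Theorem 9.16]{B2011} and the remark on p.~290 of \cite{B2011} that these embeddings hold for $W_0^{1,p}(\Omega)$ on any bounded domain, which is your zero-extension step; you are supplying the details the paper leaves implicit. One small point: for the embeddings you only need the trivial bound $\|\nabla u\|_{L^p(\Omega)}\le\|u\|_{\mathbb{X}_0^{s,p}(\Omega)}$ together with Poincaré, which helps because Theorem \ref{F-SFE} is stated only for $p>1$ while the statement allows $p=1$.
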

\begin{remark}\label{F-R2}
Note that by the choice of $q$, we have $p<\frac{pq}{q-1}<p^*$ for $p<N$. Therefore, it follows from Theorem \ref{F-ET} that $X_0^{(s,p)}(\Omega)$ is continuously and compactly embedded in $L^\frac{pq}{q-1}(\Omega)$ for $0<s<1<p<\infty$.
\end{remark}
\begin{notation}
We now fix notations that will be used throughout the paper.
    \begin{itemize}
        \item Let $A \subset \mathbb{R}^N$. We denote by $|A|$, the Lebesgue measure of $A$.
        \item For a function $u$ from any domain in $\Omega $ to $\mathbb{R}$, we define $u_{\pm}=\max\{\pm u,0\}$.
        \item Let $u,v:\mathbb{R}^N \rightarrow \mathbb{R}$. We denote
        \begin{align*}
              H_{s,p}(u,v):&=\int_\Omega |\nabla u|^{p-2}\nabla u \cdot \nabla v dx+\int_{\mathbb{R}^N}\int_{\mathbb{R}^N} |u(x)-u(y)|^{p-2}\\
              & \hspace{1cm} \times(u(x)-u(y))(v(x)-v(y))K_{s,p}(x,y)dydx.
     \end{align*}
    \end{itemize}
\end{notation}
 
To proceed to the next definition, we need the following definition of \textit{`Tail spaces'} (see \cite{DKP2016}).
$$L_{sp}^{p-1}(\mathbb{R}^N)=\{w\in L^{p-1}_{\text{loc}}(\mathbb{R}^N) : \int_{\mathbb{R}^N}\frac{|w(x)|^{p-1}}{(1+|x|)^{N+sp}}dx<\infty \}.$$
For each $w\in L_{sp}^{p-1}(\mathbb{R}^N)$, the tail of $w$ relative to a ball $B_r(x_0)$ is given by
$$\operatorname{Tail}(w;x_0,r)=\Big(r^{sp}\int_{\mathbb{R}^N \setminus B_r(x_0)}\frac{|w(x)|^{p-1}}{|x-x_0|^{N+sp}}dx\Big)^{\frac{1}{p-1}}.$$
Clearly, tail of the functions in $L_{sp}^{p-1}(\mathbb{R}^N)$ is finite over any ball in $\mathbb{R}^N$. Now, we discuss the notion of eigenvalues and eigenfunctions to \eqref{F}.\begin{definition}\label{F-WF}
    We say that $\lambda\in \mathbb{R}$ is an eigenvalue and the function $u\in \mathbb{X}_0^{s,p}(\Omega)\cap L_{s,p}^{p-1}(\mathbb{R}^N)$ is an eigenfunction associated with $\lambda$ for \eqref{F} with potential $V$ if the following holds.
    \begin{equation*}
        H_{s,p}(u,v)+\int_\Omega V|u|^{p-2}uvdx=\lambda\int_\Omega |u|^{p-2}uv dx, \text{ for all } v\in \mathbb{X}_0^{s,p}(\Omega).
    \end{equation*}
\end{definition}
For each $V\in L^q(\Omega)$, let us denote by $\lambda_1(V)$, the first eigenvalue of the problem \eqref{F} with potential $V$, (see Section \ref{F-s2} for the definition of $\lambda_1(\cdot)$). 
\begin{definition}\label{F-RQ}
    We define a functional $R_Q:L^q(\Omega)\rightarrow \mathbb{R}$ as
\begin{align}\label{F-MRQ}
    R_Q(V)&=\inf_{u\in \mathbb{X}_0^{s,p}(\Omega)}\frac{\|u\|_{ \mathbb{X}_0^{s,p}(\Omega)}^p+\int_\Omega V|u|^p dx}{\|u\|_{L^p(\Omega)}^p}\nonumber\\
    &=\inf\left\{\|u\|_{\mathbb{X}_0^{s,p}(\Omega)}^p+\int_\Omega V|u|^p dx:u\in \mathbb{X}_0^{s,p}(\Omega),\ \|u\|_{L^p(\Omega)}=1\right\}\nonumber\\
    &=\inf\left\{R_Q(V,u):u\in \mathbb{X}_0^{s,p}(\Omega),\ \|u\|_{L^p(\Omega)}=1\right\},
\end{align}
where $R_Q(V,u)$ denotes the Rayleigh quotient associated with $u$ for problem \eqref{F} with potential $V$. 
\end{definition}
\begin{remark}
    Thanks to Lemma \ref{F-L2}, we have Definition \ref{F-RQ} is well-defined.
\end{remark}

\section{The eigenvalue problem}\label{F-s2}
\noindent In this section, we discuss the existence of the first eigenvalue and characterize the eigenvalues and the associated eigenfunctions of \eqref{F} for a potential $V\in L^q(\Omega)$. The results follow from arguments similar to \cite{LGG2026, PBR2018} and from the embedding results in Theorem \ref{F-ET}. Nevertheless, we provide the outline of some proofs to make the paper self-contained. We begin with a result on the boundedness of eigenfunctions.
\begin{theorem}\label{F-E.T2}
    Let $(\lambda,u)$ be an eigenpair of \eqref{F} with potential $V$. Then, $u$ is bounded in $\mathbb{R}^N$.  
\end{theorem}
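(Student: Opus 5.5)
We prove boundedness by a De Giorgi iteration on the superlevel sets of $u$. It suffices to treat $u_+$, since $-u$ is an eigenfunction for the same $\lambda$ and $V$. If $p>N$ there is nothing to prove: Theorem \ref{F-ET} already gives $u\in C(\overline{\Omega})$, and $u=0$ outside $\Omega$. So we assume $p\le N$. Because $u$ vanishes in $\mathbb{R}^N\setminus\Omega$, no tail terms appear, and the argument takes place entirely in $\Omega$.

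For $k\ge 0$ we take $v=(u-k)_+\in \mathbb{X}_0^{s,p}(\Omega)$ as test function in Definition \ref{F-WF} and set $A_k=\{u>k\}$. For the nonlocal term we use the elementary inequality
\[
|a-b|^{p-2}(a-b)\big((a-k)_+-(b-k)_+\big)\ \ge\ |(a-k)_+-(b-k)_+|^p .
\]
Hence the nonlocal part of $H_{s,p}(u,v)$ is nonnegative, and we may simply drop it. The local part equals $\int_{\Omega}|\nabla (u-k)_+|^p\,dx$. This gives
\[
\int_\Omega |\nabla (u-k)_+|^p\,dx \le \int_{A_k}(\lambda - V)\,u^{p-1}(u-k)\,dx \le \int_{A_k}(|\lambda|+|V|)\,u^{p}\,dx .
\]
On $A_k$ we write $u\le (u-k)_++k$. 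Hölder's inequality with exponents $q$ and $q'=q/(q-1)$ then yields
\[
\|\nabla (u-k)_+\|_p^p \le C\big(|\lambda|\,|\Omega|^{1/q}+\|V\|_q\big)\Big(\|(u-k)_+\|_{L^{pq'}}^p + k^p|A_k|^{1/q'}\Big).
\]

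The key step is absorbing the $\|(u-k)_+\|_{L^{pq'}}^p$ term on the left-hand side. By Remark \ref{F-R2}, $pq'<p^*$ (or $pq'<\infty$ when $p=N$; there we fix a large finite $r>pq'$ in place of $p^*$). Interpolation and the Sobolev embedding give
\[
\|w\|_{L^{pq'}}\le |A_k|^{\frac{1}{pq'}-\frac1{p^*}}\|w\|_{L^{p^*}}\le C\,|A_k|^{\varepsilon}\,\|\nabla w\|_p,\qquad w=(u-k)_+ .
\]
Since $|A_k|\le \|u\|_p^p/k^p\to 0$, there is $k_0$ such that for $k\ge k_0$ this term is absorbed. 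We are left with
\[
\|(u-k)_+\|_{L^{p^*}}^p\le C\,k^p\,|A_k|^{1/q'} .
\]

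Now take $h>k\ge k_0$. Since $(u-k)_+\ge h-k$ on $A_h$,
\[
(h-k)^p|A_h|^{p/p^*}\le C\,h^p\,|A_k|^{1/q'} ,
\]
that is,
\[
|A_h|\le \frac{C\,h^{p^*}}{(h-k)^{p^*}}\,|A_k|^{\frac{p^*}{pq'}} .
\]
The exponent $\beta=p^*/(pq')$ is strictly greater than $1$ because $q>N/p$, which is exactly the role of that hypothesis. Along the sequence $k_j=2k_*(1-2^{-j-1})$ the factor $h^{p^*}$ stays bounded by $(2k_*)^{p^*}$, and we obtain a recursion
\[
|A_{k_{j+1}}|\le C\,b^{j}\,|A_{k_j}|^{\beta},\qquad \beta>1 .
\]
The standard lemma of Stampacchia or Ladyzhenskaya–Ural'tseva then gives $|A_{k_j}|\to 0$, provided $|A_{k_0}|$ is small enough, and this is ensured by choosing $k_*$ large. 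Therefore $u\le 2k_*$ a.e. Applying the same argument to $-u$ gives the lower bound, and $u=0$ outside $\Omega$, so $u\in L^\infty(\mathbb{R}^N)$.

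The main obstacle is the absorption step together with checking $\beta>1$. A cleaner alternative, if the bookkeeping becomes awkward, is a Moser iteration testing with $u_{+}\min\{u_+,M\}^{\,\beta p}$, using the same Hölder/Sobolev split and $q>N/p$ to gain the integrability factor $p^*/(pq')>1$ at each step.
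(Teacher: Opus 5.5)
Your proof is correct. It belongs to the same De Giorgi family as the paper's argument, but the bookkeeping differs substantively. The paper iterates the norms $b_i=\|(u-(1-2^{-i}))_+\|_{L^{pq/(q-1)}(\Omega)}^p$ along levels accumulating at $1$. It needs no absorption step, because it controls $|u|^{p-1}$ pointwise by $(2^{i+1}-1)^{p-1}u_i^{p-1}$. That bound actually holds only on $\{u_{i+1}>0\}$, so \eqref{F-E.T2-1} should really be run with test function $u_{i+1}$ and a right-hand side in terms of $u_i$. The paper also gets the smallness of $b_0$ for free, by rescaling $u$ and using the $(p-1)$-homogeneity of the equation. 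You instead run Stampacchia's iteration on the measures $|A_k|$. The splitting $u\le (u-k)_++k$ creates a term $\|(u-k)_+\|_{L^{pq'}}^p$, which you absorb using $|A_k|\le \|u\|_{L^p}^p/k^p$ for $k\ge k_0$. The smallness of the initial level set then comes from Chebyshev rather than from rescaling. That step is legitimate precisely because the constants $C,b$ in $|A_{k_{j+1}}|\le C b^j |A_{k_j}|^\beta$ do not depend on $k_*$, since $k_{j+1}/(k_{j+1}-k_j)< 2^{j+2}$ along your sequence. (The first level of that sequence is $k_*$, not the absorption threshold $k_0$, and you need $k_*\ge k_0$.) Your route does not use homogeneity, so it adapts readily to non-homogeneous right-hand sides. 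The paper's route is shorter and works in a single norm throughout. In both, $q>N/p$ enters in exactly the same way, as the gain exponent $p^*/(pq')>1$ (the paper's $\alpha>0$), with $p^*$ replaced by a large finite exponent when $p=N$.
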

\begin{proof}
    Since $u\in \mathbb{X}_0^{s,p}(\Omega)$, we have $u \equiv 0 $ in $\mathbb{R}^N \setminus\Omega$. Thus, it suffices to prove that $u$ is bounded in $\
    \Omega$. Observe that for case $p>N$, the theorem holds by the continuous embedding $\mathbb{X}_0^{s,p}(\Omega)\hookrightarrow L^\infty(\Omega)$ from Theorem \ref{F-ET}. Now, consider the case $p \leq N$. Let $u_i:=(u-(1-\frac{1}{2^i}))_+$ for $i\in\mathbb{N}$. Let us define the sequence $(b_i)$ by
    $$b_i:=\|u_i\|_{L^{\frac{pq}{q-1}}(\Omega)}^p \text{ for all } i\in \mathbb{N} \cup\{0\}.$$
    Observe that $u_0=u_+$ and $u_i \rightarrow (u-1)_+$ pointwise as $i \rightarrow \infty$. Since we have $|u_i|\leq |u|+1 \in L^{\frac{pq}{q-1}}(\Omega)$ for all $i\in \mathbb{N}$ by Remark \ref{F-R2}, we obtain using the dominated convergence theorem that $b_i \rightarrow \|(u-1)_+\|_{L^{\frac{pq}{q-1}}(\Omega)}$ as $i \rightarrow \infty$. Consider an $i \in \mathbb{N} \cup\{0\}$.  Fix $\eta>\frac{pq}{q-1}$. Let
    $$\theta=\begin{cases}
        p^*, \ &\text{ if } p<N,\\
        \eta, \ &\text{ if } p=N.
    \end{cases}$$
    It is easy to see that $\{u_{i+1}>0\}\subset \{u_i>\frac{1}{2^{i+1}}\}$ for all $i\in \mathbb{N} \cup\{0\}$. Also, using the H\"older inequality, we have
    \begin{align*}
       \left| \left\{u_i>\frac{1}{2^{i+1}}\right\}\right|&=\int_{\left\{u_i>\frac{1}{2^{i+1}}\right\}}1 dx\\
       &\leq \left(\int_{\left\{u_i>\frac{1}{2^{i+1}}\right\}}u_i^\frac{pq}{q-1} dx\right)^\frac{q-1}{pq}  \left(\int_{\left\{u_i>\frac{1}{2^{i+1}}\right\}}\left(\frac{1}{u_{i+1}}\right)^{\frac{pq}{pq-(q-1)}}dx\right)^{1-\frac{q-1}{pq}}\\
       &\leq \|u_i\|_{L^\frac{pq}{q-1}(\Omega)}2^{(i+1)}\left|\left\{u_i>\frac{1}{2^{i+1}}\right\}\right|^{1-\frac{q-1}{pq}}.
    \end{align*}
    Therefore, we have
    $$\left|\{u_{i+1}>0\}\right|\leq \|u_i\|_{L^\frac{pq}{q-1}(\Omega)}^\frac{pq}{q-1}2^{(i+1)\frac{pq}{q-1}}.$$
    Thus, applying the H\"older inequality with exponents $\frac{\theta(q-1)}{pq}$ and $\frac{\theta(q-1)}{\theta(q-1)-pq}$ and using the continuous embedding given by Theorem \ref{F-ET}, we deduce that
    \begin{align}\label{F-E.T2-2}
        \|u_{i+1}\|_{L^\frac{pq}{q-1}(\Omega)}^p&=\left(\int_{\{u_{i+1}>0\}}u_{i+1}^\frac{pq}{q-1}dx\right)^{\frac{q-1}{q}}\nonumber\\
        &\leq \|u_{i+1}\|_{L^\theta(\Omega)}^p|\{u_{i+1}>0\}|^{\frac{q-1}{q}-\frac{p}{\theta}}\nonumber\\
        &\leq C\|u_{i+1}\|_{\mathbb{X}_0^{s,p}(\Omega)}^p\left(\|u_i\|_{L^\frac{pq}{q-1}(\Omega)}2^{i+1}\right)^{\frac{pq}{q-1}\left(\frac{q-1}{q}-\frac{p}{\theta}\right)}\nonumber\\
        &\leq C\|u_{i+1}\|_{\mathbb{X}_0^{s,p}(\Omega)}^p\left(\|u_i\|_{L^\frac{pq}{q-1}(\Omega)}^p2^{ip}\right)^{1-\frac{pq}{\theta(q-1)}}.
    \end{align}
    Observe that $u<(2^{i+1}-1)u_i$ by the definition of $u_i$. Following the derivation of equations (5.42) and (5.43) in \cite[Theorem 5.3]{LGG2026} and substituting them in the weak formulation of \eqref{F} with the test function $u_i$ (see Definition \ref{F-WF}), we deduce the following.
    \begin{align}\label{F-E.T2-1}
        \|u_i\|_{\mathbb{X}_0^{s,p}(\Omega)}^p&\leq H_{s,p}(u,u_i)\nonumber\\
        &=\int_\Omega (\lambda-V)|u|^{p-2}u u_i dx \nonumber\\
        &\leq (2^{i+1}-1)^{p-1}\int_\Omega (|\lambda|+|V|)|u_i|^{p}dx\nonumber\\
        &\leq (2^{i+1}-1)^{p-1}\left(|\lambda|\cdot|\Omega|^\frac{1}{q}+\|V\|_{L^q(\Omega)}\right)\|u_i\|_{L^{\frac{pq}{q-1}}(\Omega)}^p\nonumber\\
        &\leq C2^{ip}\|u_i\|_{L^{\frac{pq}{q-1}}(\Omega)}^p. 
    \end{align}
    The penultimate step in \eqref{F-E.T2-1} is obtained using the H\"older inequality. Combining \eqref{F-E.T2-2} and \eqref{F-E.T2-1}, we deduce that
    \begin{equation}\label{F-E.T2-3}
        b_{i+1}\leq C\left(2^{ip}b_i\right)^{1+\alpha},
    \end{equation}
    where $\alpha:=1-\frac{pq}{\theta(q-1)}>0$ by the choice of $\theta$. Now, since $cu$ is an eigenfunction associated with $\lambda$ of \eqref{F} for all $c\in \mathbb{R}\setminus\{0\}$, choose $u$ such that $b_0\leq C^\frac{-1}{\alpha^2}$. Then, \eqref{F-E.T2-3} gives
    $$\|(u-1)_+\|_{L^\frac{pq}{q-1}(\Omega)}^p=\lim\limits_{i \rightarrow \infty}b_i=0.$$
    Thus, $u_+$ is bounded in $\Omega$. Replacing $u$ by $-u$ in the above proof, we obtain that $u_-$ is bounded in $\Omega$. This completes the proof.
\end{proof}
Next, we give a unique characterization of the first eigenvalue $\lambda_1(V)$ of \eqref{F}.
\begin{theorem}\label{F-E.T1}
    For each $V\in L^q(\Omega)$, the minimum of Rayleigh quotients, $R_Q(V)$ defined by \eqref{F-MRQ} is attained by a non-zero $u$ in $ \mathbb{X}_0^{s,p}(\Omega)$ with $\|u\|_{L^p(\Omega)}=1$. Also, $R_Q(V)=\lambda_1(V)$ and $u$ is an eigenfunction associated with $\lambda_1(V)$ for \eqref{F} with potential $V$. 
\end{theorem}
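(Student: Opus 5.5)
We use the direct method of the calculus of variations on the constraint set $\mathcal{M}=\{u\in \mathbb{X}_0^{s,p}(\Omega):\|u\|_{L^p(\Omega)}=1\}$. Then we derive the Euler--Lagrange equation and compare with an arbitrary eigenvalue.

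\textbf{Step 1: $R_Q(V)$ is finite.} Write $r=\frac{pq}{q-1}$. By H\"older,
$$\Big|\int_\Omega V|u|^p\,dx\Big|\le \|V\|_{L^q(\Omega)}\|u\|_{L^r(\Omega)}^p.$$
Since $p<r<p^*$ (Remark \ref{F-R2}), the Lebesgue interpolation inequality gives
$$\|u\|_{L^r}\le \|u\|_{L^p}^{1-\theta}\|u\|_{L^{\tilde r}}^{\theta}$$
for some $\theta\in(0,1)$ and some $\tilde r$ in the range of the continuous embedding of Theorem \ref{F-ET}. Applying Young's inequality, for every $\varepsilon>0$ there is $C_\varepsilon$ with
$$\|V\|_{L^q(\Omega)}\|u\|_{L^r(\Omega)}^p\le \varepsilon\|u\|_{\mathbb{X}_0^{s,p}(\Omega)}^p+C_\varepsilon\|u\|_{L^p(\Omega)}^p.$$
We choose $\varepsilon=\tfrac12$. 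Then for all $u\in\mathcal{M}$,
$$R_Q(V,u)\ge \tfrac12\|u\|_{\mathbb{X}_0^{s,p}(\Omega)}^p-C.$$
So $R_Q(V)>-\infty$, and every minimizing sequence $(u_n)\subset\mathcal{M}$ is bounded in $\mathbb{X}_0^{s,p}(\Omega)$. This uniform coercivity estimate is the main obstacle. Everything else is standard once it holds. If interpolation is awkward in the case $p\ge N$, we instead use the compactness of $\mathbb{X}_0^{s,p}\hookrightarrow L^r$ together with an Ehrling-type lemma.

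\textbf{Step 2: existence of a minimizer.} By Theorem \ref{F-RSB} the space is reflexive, so we may extract $u_n\rightharpoonup u$ weakly in $\mathbb{X}_0^{s,p}(\Omega)$. By the compact embeddings (Theorem \ref{F-ET}, Remark \ref{F-R2}), $u_n\to u$ strongly in $L^p(\Omega)$ and in $L^r(\Omega)$. Hence $\|u\|_{L^p}=1$, so $u\neq0$. Moreover $|u_n|^p\to|u|^p$ in $L^{q'}(\Omega)$, which gives $\int V|u_n|^p\to\int V|u|^p$. The norm is weakly lower semicontinuous, so $R_Q(V,u)\le\liminf_n R_Q(V,u_n)=R_Q(V)$. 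Therefore $u$ attains the infimum.

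\textbf{Step 3: $u$ is an eigenfunction for $R_Q(V)$.} For $v\in\mathbb{X}_0^{s,p}(\Omega)$, the map $t\mapsto R_Q(V,u+tv)$, taken in the quotient form of \eqref{F-MRQ}, has a minimum at $t=0$. Both $\|\cdot\|_{\mathbb{X}_0^{s,p}}^p$ and $\int V|\cdot|^p$ are $C^1$; the second is so because of the $L^r$ embedding. Differentiating at $t=0$ and dividing by $p$ gives
$$H_{s,p}(u,v)+\int_\Omega V|u|^{p-2}uv\,dx=R_Q(V)\int_\Omega|u|^{p-2}uv\,dx.$$
Thus $(R_Q(V),u)$ is an eigenpair in the sense of Definition \ref{F-WF}. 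Since $u$ vanishes outside the bounded set $\Omega$ and lies in $L^p$, the tail condition $u\in L^{p-1}_{sp}(\mathbb{R}^N)$ holds trivially.

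\textbf{Step 4: $R_Q(V)=\lambda_1(V)$.} Let $(\lambda,w)$ be any eigenpair. Testing Definition \ref{F-WF} with $v=w$ gives
$$\|w\|_{\mathbb{X}_0^{s,p}(\Omega)}^p+\int_\Omega V|w|^p\,dx=\lambda\|w\|_{L^p(\Omega)}^p,$$
so $\lambda=R_Q(V,w/\|w\|_{L^p})\ge R_Q(V)$. Hence $R_Q(V)$ is an eigenvalue that is at most every eigenvalue, that is, the smallest eigenvalue $\lambda_1(V)$. The minimizer $u$ is an associated eigenfunction.
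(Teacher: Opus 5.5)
Your proposal is correct and follows the same route as the paper. Both use the direct method on the $L^p$-unit sphere, relying on reflexivity and the compact embedding into $L^p(\Omega)$ and $L^{\frac{pq}{q-1}}(\Omega)$, and both then test the weak formulation with an arbitrary eigenfunction to obtain $\lambda\ge R_Q(V)$. You are more explicit than the paper in two places: it calls the Euler--Lagrange step ``clear'', and it leaves the boundedness of minimizing sequences (needed because $V$ may change sign) to a reference and to the $\delta$-estimate of Lemma \ref{F-L2}, which your interpolation/Ehrling bound reproduces.
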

\begin{proof}
    If $R_Q(V)$ is attained at $ u \in \mathbb{X}_0^{s,p}(\Omega)$ having $\|u\|_{L^p(\Omega)}=1$, clearly $R_Q(V)$ is an eigenvalue and $u$ is an associated eigenfunction. Now, let $(\lambda, \tilde{u})$ be an eigenpair of \eqref{F} with $\|\tilde{u}\|_{L^p(\Omega)}=1$. Since $\tilde{u}$ can be taken as a test function in the weak formulation of \eqref{F} given by Definition \ref{F-WF}, we deduce that
    $$\lambda=H_{s,p}(\tilde{u},\tilde{u})+\int_{\Omega}V|\tilde{u}|^p dx=\|\tilde{u}\|_{ \mathbb{X}_0^{s,p}(\Omega)}^p+\int_{\Omega}V|\tilde{u}|^p dx\geq R_Q(V).$$
    Thus, $\lambda_1(V)$ is the first eigenvalue of \eqref{F}. It only remains to prove that $R_Q(V)$ is attained in $\mathbb{X}_0^{s,p}(\Omega)$. To prove this, consider a minimizing sequence $(u_i) \subset \{u \in \mathbb{X}_0^{s,p}(\Omega): \|u\|_{L^p(\Omega)}=1\}$ of $R_Q(V)$. Proceeding similarly to the proof of \cite[Theorem 4.1]{LGG2026}, using Theorem \ref{F-RSB} and the compact embedding of $\mathbb{X}_0^{s,p}(\Omega)$ from Theorem \ref{F-ET}, we deduce that up to a subsequence, $(u_i)$ weakly converges to a function $u$ in $\mathbb{X}_0^{s,p}(\Omega)$ with $\|u\|_{L^p(\Omega)}=1$ (thus $u \not \equiv 0$) and 
    $$R_Q(V)=\|u\|_{\mathbb{X}_0^{s,p}(\Omega)}^p+\int_\Omega V|u|^p dx.$$
    Hence, the proof is complete.
\end{proof}
We now prove a comparison principle, which is required to establish the strict positivity of eigenfunctions associated with $\lambda_1(V)$ of \eqref{F}.
\begin{lemma}\label{F-E.L1}[Comparison Principle]
    Let $V\in L^q(\Omega)$ and $u \not \equiv 0$ be an element of $\mathbb{X}_0^{s,p}(\Omega)$ that satisfies
    \begin{equation}\label{F-E.L1-1'}
        H_{s,p}(u,v)+\int_\Omega V|u|^{p-2}uvdx\geq 0, \text{ for all } v\in \mathbb{X}_0^{s,p}(\Omega).
    \end{equation}
    Then, $u>0$ a.e. in $\Omega$.
\end{lemma}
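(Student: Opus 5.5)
The plan is to reduce the statement to a strong minimum principle for nonnegative supersolutions, and to prove that principle with a logarithmic energy estimate in the spirit of Di Castro--Kuusi--Palatucci \cite{DKP2016}, adapted to the mixed operator $H_{s,p}$ with the potential $V$.

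As written, \eqref{F-E.L1-1'} is required for \emph{all} $v\in\mathbb{X}_0^{s,p}(\Omega)$. Replacing $v$ by $-v$ then turns it into an equality, so $u$ would be a solution with eigenvalue $0$, and such a $u$ may change sign. I therefore read the lemma as the usual supersolution statement: \eqref{F-E.L1-1'} holds for all $v\ge 0$, and $u\ge 0$. This is exactly the situation in which the lemma will be used, namely for $u=|\varphi|$ with $\varphi$ a first eigenfunction. Indeed $|\varphi|$ is again a minimizer of $R_Q(V)$ in Theorem \ref{F-E.T1}, hence an eigenfunction, and after shifting by the constant $-\lambda_1(V)$ inside the potential it is a nonnegative supersolution.

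The first point to be careful about is how $u\ge 0$ enters. Testing with $v=u_-$ does not give it for free: it only yields
\[
\|u_-\|_{\mathbb{X}_0^{s,p}(\Omega)}^p+\int_\Omega V u_-^p\,dx\le 0,
\]
which forces $u_-=0$ only when $\lambda_1(V)>0$. So nonnegativity will be kept as an input, justified by the application above. By Theorem \ref{F-E.T2}, $u$ is bounded.

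\textbf{Step 1 (logarithmic estimate).} Fix a ball $B_{r}=B_r(x_0)$ with $B_{2r}\subset\Omega$, a cutoff $\phi\in C_c^\infty(B_{3r/2})$ with $0\le\phi\le1$, $\phi=1$ on $B_r$ and $|\nabla\phi|\le C/r$, and $\delta>0$. Test \eqref{F-E.L1-1'} with the admissible nonnegative function $v=\phi^p(u+\delta)^{1-p}$.
\begin{itemize}
\item The local part gives, by the standard computation, $-c\int|\nabla\log(u+\delta)|^p\phi^p+C\int|\nabla\phi|^p$.
\item The nonlocal part is handled as in \cite[Lemma 1.3]{DKP2016}. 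Using the kernel bounds \eqref{F-Kernel} and $u\ge0$, it contributes a nonpositive multiple of $\iint_{B_r\times B_r}|\log(u(x)+\delta)-\log(u(y)+\delta)|^pK_{s,p}$, plus terms bounded by $Cr^{N-sp}$ and a tail term. The tail term is controlled since $u\in L^\infty$ and $u=0$ outside $\Omega$.
\item The potential term is harmless: $|V|u^{p-1}(u+\delta)^{1-p}\phi^p\le|V|\phi^p$, whose integral is at most $\|V\|_{L^q(\Omega)}|B_{2r}|^{1-1/q}$.
\end{itemize}
Altogether,
\[
\int_{B_r}|\nabla\log(u+\delta)|^p\,dx\le C\,r^{N-p}\bigl(1+r^{p(1-s)}+r^{p-N/q}\bigr),
\]
uniformly in $\delta$.

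\textbf{Step 2 (positivity in balls).} Suppose $Z=\{u=0\}\cap B_r$ has positive measure, and let $w_\delta=\log\bigl(1+u/\delta\bigr)$. Then $w_\delta=0$ on $Z$, and $w_\delta\ge\log(1+k/\delta)$ on $\{u\ge k\}\cap B_r$. If $u\not\equiv0$ on $B_r$, this set has positive measure for some $k>0$. A Poincaré inequality on $B_r$ for functions vanishing on a set of positive measure, combined with Step 1, bounds $\int_{B_r}|w_\delta|^p$ independently of $\delta$, while the lower bound forces $\int_{B_r}|w_\delta|^p\to\infty$ as $\delta\to0$. This contradiction shows that on every such ball either $u\equiv0$ or $u>0$ a.e.

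\textbf{Step 3 (connectedness).} The sets $\{x\in\Omega: u>0 \text{ a.e. near } x\}$ and $\{x\in\Omega: u=0\text{ a.e. near } x\}$ are open, disjoint, and by Step 2 cover $\Omega$. Since $\Omega$ is connected and $u\not\equiv0$, we get $u>0$ a.e. in $\Omega$. Alternatively, the nonlocal term alone gives this: if $u\equiv0$ on some ball, testing with a bump supported there makes $H_{s,p}(u,v)$ strictly negative because $u\not\equiv0$ elsewhere, contradicting \eqref{F-E.L1-1'}.

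The main obstacle is Step 1: carrying out the nonlocal logarithmic estimate with the general kernel $K_{s,p}$, controlling the tail, and checking that the $V$-term is integrable uniformly in $\delta$. The last point is where $q>N/p$ is used.
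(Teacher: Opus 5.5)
Your argument is correct under your reading ($u\ge 0$, test functions $v\ge 0$). That reading is forced, since the statement as written is invariant under $u\mapsto -u$, and it is also what the paper's proof actually uses: it invokes the non-negativity of $u$ and tests only with $v\ge 0$. Your route is the paper's: the logarithmic test function $\phi^p(u+\delta)^{1-p}$ with the $V$-term bounded by $\int|V|\phi^p$, a blow-up contradiction as $\delta\to 0$ on a ball meeting $\{u=0\}$, and the nonlocal bump argument that rules out $u\equiv 0$ on a ball.

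The only variation is how you extract the contradiction. You use the gradient term together with a Poincar\'e inequality for functions vanishing on a set of positive measure. The paper instead averages the nonlocal term $\iint_{B\times B}\bigl|\log\frac{u(x)+\epsilon}{u(y)+\epsilon}\bigr|^p|x-y|^{-N-sp}\,dy\,dx$ over $y\in B\cap\{u=0\}$, which needs no Poincar\'e inequality.
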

\begin{proof}
    If possible, assume that there is a connected component $\Omega'$ of $\Omega$ such that $u \equiv 0$ in $\Omega'$. Let $v\in C_c^\infty(\Omega')$ with $v\geq 0$. On taking $v$ as a test function in \eqref{F-E.L1-1'}, we deduce
    $$\int_{\mathbb{R}^N \setminus \Omega'}\phi(y)\left(\int_{\Omega'}u(x)^{p-1}K_{s,p}(x,y)dx\right)dy=0.$$
    Since $\phi \in C_c^\infty(\Omega')$ is an arbitrary non-negative function, we arrive at
    $$\int_{\Omega'}u(x)^{p-1}K_{s,p}(x,y)dx=0 \text{ for all } y \in \mathbb{R}^N.$$
    However, from \eqref{F-Kernel}, for each $x\in \mathbb{R}^N$, we have $K_{s,p}(x,y)>0$ for a.e. $y \in \mathbb{R}^N$. Combining this result with the non-negativity of $u$, it follows that $u \equiv 0$ in $\mathbb{R}^N \setminus \Omega'$. Thus, $u \equiv 0$, which contradicts our assumption. Hence, we have $u \not \equiv 0$ in all connected components of $\Omega$. 
    For $\epsilon>0$, define 
    $$T_\epsilon(x)=\log\left(1+\frac{u(x)}{\epsilon}\right).$$
    Let $A=\{x\in \Omega: u(x)=0\}$. If possible, assume that $|A|>0$. Then, consider a ball $B_{2r}(x_0)\subset \Omega$ such that $u \not \equiv 0$ in $B:=B_r(x_0)$ and $|B\cap A|>0$. Note that for any $\psi\in C_c^\infty(B_{\frac{3r}{2}}(x_0)$ with $0\leq \psi \leq 1$, and $\psi \equiv 1$ in $B$, the embedding in Theorem \ref{F-ET} along with the H\"older inequality gives
    \begin{equation}\label{F-E.T3-1'}
        \int_{B_{2r}(x_0)} Vu^{p-1}(u+\epsilon)^{1-p}\psi dx\leq Cr^{\frac{q-1}{pq}}\|V\|_{L^q(\Omega)}.
    \end{equation}
     Following the steps in the derivation of \cite[Lemma 3.2]{LGG2026} along with \eqref{F-E.T3-1'}, we arrive at
    \begin{equation}\label{F-E.T3-1}
        \int_B |\nabla (u(x)+\epsilon)|^p dx +\int_B\int_B\left|\log\left(\frac{u(x)+\epsilon}{u(y)+\epsilon}\right)\right|^p\frac{1}{|x-y|^{N+sp}}dydx\leq C,
    \end{equation}
    where $C=C(N,s,p,r,u,V)>0$ is a constant. Now, for $x\in B$ and $y \in B\cap A$, we have
    \begin{equation}\label{F-E.T3-1''}
        |T_\epsilon(x)|^p=|T_\epsilon(x)-T_\epsilon(y)|^p\leq \frac{2r}{|x-y|^{N+sp}}\left|\log\left(\frac{u(x)+\epsilon}{u(y)+\epsilon}\right)\right|^p.
    \end{equation}
    First taking the average in \eqref{F-E.T3-1''} over $y\in B\cap A$, then integrating over $x\in B$ and finally using \eqref{F-E.T3-1}, we have
    \begin{align*}
        \int_B|T_\epsilon(x)|^p dx &\leq\frac{(2r)^{N+sp}}{|B \cap A|}\int_B\int_B\left|\log\left(\frac{u(x)+\epsilon}{u(y)+\epsilon}\right)\right|^p\frac{1}{|x-y|^{N+sp}}dydx\leq \frac{C}{|B \cap A|}.
    \end{align*}
   Taking $\epsilon\rightarrow 0$, we infer that $u \equiv 0$ a.e. in $B$. This contradicts our initial assumption. Hence, we deduce $u>0$ a.e. in $\Omega$, which is the desired result.
\end{proof}

We now establish some properties of the sign of eigenfunctions to \eqref{F}, depending on the corresponding eigenvalue.
\begin{theorem}\label{F-E.T3}
    Let $u$ be a non-zero eigenfunction associated with $\lambda_1(V)$ for \eqref{F}. Then, either $u>0$ a.e. in $\Omega$ or $u<0$ a.e. in $\Omega$.
\end{theorem}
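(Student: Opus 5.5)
We argue via the variational characterization of Theorem \ref{F-E.T1}, showing that $|u|$ is again a minimizer and then applying the Comparison Principle, Lemma \ref{F-E.L1}, to a suitable shifted problem. Normalize $\|u\|_{L^p(\Omega)}=1$; since the weak formulation is homogeneous, this loses no generality. Testing Definition \ref{F-WF} with $v=u$ gives
\[
\lambda_1(V)=\|u\|_{\mathbb{X}_0^{s,p}(\Omega)}^p+\int_\Omega V|u|^p\,dx,
\]
so $u$ attains $R_Q(V)$. Next, $|u|\in\mathbb{X}_0^{s,p}(\Omega)$ with $|\nabla|u||=|\nabla u|$ a.e. and $\bigl||u(x)|-|u(y)|\bigr|\le|u(x)-u(y)|$. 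Hence $\|\,|u|\,\|_{\mathbb{X}_0^{s,p}(\Omega)}\le\|u\|_{\mathbb{X}_0^{s,p}(\Omega)}$, while the potential and $L^p$ terms are unchanged. Therefore $R_Q(V,|u|)\le R_Q(V,u)=R_Q(V)$, so equality holds and $|u|$ is also a minimizer. By the first part of the proof of Theorem \ref{F-E.T1}, a minimizer of the Rayleigh quotient satisfies the Euler--Lagrange equation, so $w:=|u|$ is an eigenfunction for $\lambda_1(V)$.

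To apply Lemma \ref{F-E.L1} we need an inequality of the form $H_{s,p}(w,v)+\int_\Omega \widetilde V w^{p-1}v\,dx\ge 0$ for a potential $\widetilde V\in L^q(\Omega)$. Set $\widetilde V:=V-\lambda_1(V)$, which lies in $L^q(\Omega)$ because $\Omega$ is bounded. Since $w$ is an eigenfunction,
\[
H_{s,p}(w,v)+\int_\Omega \widetilde V\, w^{p-1}v\,dx=0\quad\text{for all } v\in\mathbb{X}_0^{s,p}(\Omega).
\]
In particular the inequality \eqref{F-E.L1-1'} holds with potential $\widetilde V$, and $w\not\equiv0$. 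Lemma \ref{F-E.L1} then yields $|u|>0$ a.e. in $\Omega$, so $u$ vanishes only on a null set.

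The main obstacle is passing from $|u|>0$ to a constant sign of $u$. We compare the energies: since both $u$ and $|u|$ attain the minimum, $\|\,|u|\,\|_{\mathbb{X}_0^{s,p}(\Omega)}^p=\|u\|_{\mathbb{X}_0^{s,p}(\Omega)}^p$. The local terms agree, so the nonlocal terms must agree too, and pointwise $\bigl||u(x)|-|u(y)|\bigr|^p\le|u(x)-u(y)|^p$. Equality of the integrals therefore forces equality a.e. in $(x,y)$, since $K_{s,p}>0$ by \eqref{F-Kernel}. Suppose $u$ changes sign, i.e. $|\{u>0\}|>0$ and $|\{u<0\}|>0$. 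On the set $\{u>0\}\times\{u<0\}$, which has positive product measure, we have $\bigl||u(x)|-|u(y)|\bigr|<|u(x)|+|u(y)|=|u(x)-u(y)|$, so the inequality is strict there. This contradicts the equality a.e. Hence, up to null sets, either $u=|u|>0$ or $u=-|u|<0$ a.e. in $\Omega$. Note that the nonlocal term makes this argument independent of the connectivity of $\Omega$. As an alternative check, one could apply Lemma \ref{F-E.L1} separately to $u_+$ and $u_-$, but the energy comparison above is cleaner.
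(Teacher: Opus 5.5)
Your proof is correct and follows the paper's approach: you compare the energies of $u$ and $|u|$ to rule out a sign change, then apply the Comparison Principle, Lemma \ref{F-E.L1}, with the shifted potential $V-\lambda_1(V)\in L^q(\Omega)$ to a nonnegative eigenfunction. You reverse the order (first $|u|>0$, then constant sign), and you supply the justification the paper only asserts for the strict energy inequality, using the strict pointwise inequality on $\{u>0\}\times\{u<0\}$ together with positivity of the kernel.
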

\begin{proof}
    If $u$ changes sign in $\Omega$ in a subset of $\Omega$ with positive measure, we get
    $$\||u|\|_{\mathbb{X}_0^{s,p}(\Omega)}^p+\int_\Omega V|u|^p dx <\|u\|_{\mathbb{X}_0^{s,p}(\Omega)}^p+\int_\Omega V|u|^p dx=\lambda_1(V),$$
    which contradicts Theorem \ref{F-E.T1}. Consequently, $u$ does not change sign a.e. in $\Omega$. Without loss of generality, assume that $u \geq 0$ a.e. in $\Omega$. BY the weak formulation \eqref{F-WF}, we have
    $$H_{s,p}(u,v)+\int_\Omega (V-\lambda_1(V))|u|^{p-2}uvdx= 0, \text{ for all } v\in \mathbb{X}_0^{s,p}(\Omega).$$
    Also, $V-\lambda_1(V)\in L^q(\Omega)$. Thus, by Lemma \ref{F-E.L1}, we have $u>0$ a.e. in $\Omega$. If $u$ is a non-positive eigenfunction, we get the required result by replacing $u$ with $-u$ in the previous argument. 
\end{proof}

\begin{theorem}\label{F-E.T3'}
    Let $v$ be a non-zero eigenfunction associated with $\lambda>\lambda_1(V)$ for \eqref{F}. Then, $v$ changes sign in $\Omega$.
\end{theorem}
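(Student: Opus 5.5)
We argue by contradiction. Suppose $v$ does not change sign; replacing $v$ by $-v$ if necessary, we may assume $v\geq 0$ a.e. in $\Omega$ and $v\not\equiv 0$. By Theorem \ref{F-E.T2}, $v\in L^\infty(\mathbb{R}^N)$. Since $V-\lambda\in L^q(\Omega)$, Lemma \ref{F-E.L1} applies to $v$ and gives $v>0$ a.e. in $\Omega$. Let $u$ be the positive eigenfunction for $\lambda_1(V)$ given by Theorems \ref{F-E.T1} and \ref{F-E.T3}. It is bounded by Theorem \ref{F-E.T2} and satisfies $u>0$ a.e. in $\Omega$. The aim is to contradict $\lambda>\lambda_1(V)$ via a Picone-type identity for the mixed operator.

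For $\epsilon>0$, put $v_\epsilon=v+\epsilon$. We use the test function $\phi_\epsilon=u^p/v_\epsilon^{p-1}$. Because $u\in\mathbb{X}_0^{s,p}(\Omega)\cap L^\infty$ and $v_\epsilon\geq\epsilon$, the chain rule gives $\phi_\epsilon\in\mathbb{X}_0^{s,p}(\Omega)$; it vanishes outside $\Omega$ since $u$ does. Testing the equation for $v$ with $\phi_\epsilon$ gives
\begin{equation*}
H_{s,p}(v,\phi_\epsilon)=\int_\Omega(\lambda-V)v^{p-1}\frac{u^p}{v_\epsilon^{p-1}}\,dx .
\end{equation*}
We bound the left side from above. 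The local part satisfies the classical Picone inequality $|\nabla v|^{p-2}\nabla v\cdot\nabla\phi_\epsilon\leq|\nabla u|^p$, using $\nabla v=\nabla v_\epsilon$. The nonlocal part satisfies the discrete Picone inequality
\begin{equation*}
|a-b|^{p-2}(a-b)\left(\frac{c^p}{a^{p-1}}-\frac{d^p}{b^{p-1}}\right)\leq|c-d|^p,\qquad a,b>0,\ c,d\geq0,
\end{equation*}
applied with $a=v_\epsilon(x)$, $b=v_\epsilon(y)$, $c=u(x)$, $d=u(y)$. Outside $\Omega$ we have $v_\epsilon=\epsilon$, and $v_\epsilon(x)-v_\epsilon(y)=v(x)-v(y)$, so the inequality applies everywhere. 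Together these yield $H_{s,p}(v,\phi_\epsilon)\leq\|u\|_{\mathbb{X}_0^{s,p}(\Omega)}^p$.

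Next we let $\epsilon\to0$ on the right side. We have $v^{p-1}/v_\epsilon^{p-1}\uparrow 1$ a.e. in $\Omega$ because $v>0$ there, and the integrand is dominated by $(|\lambda|+|V|)u^p\in L^1$ since $u$ is bounded and $V\in L^q$. Dominated convergence then gives
\begin{equation*}
\|u\|_{\mathbb{X}_0^{s,p}(\Omega)}^p\geq\int_\Omega(\lambda-V)u^p\,dx .
\end{equation*}
Normalizing $\|u\|_{L^p(\Omega)}=1$, this reads $\lambda_1(V)=\|u\|_{\mathbb{X}_0^{s,p}(\Omega)}^p+\int_\Omega Vu^p\,dx\geq\lambda$, which contradicts $\lambda>\lambda_1(V)$.

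The main obstacle is justifying the discrete Picone inequality, together with the admissibility and integrability of $\phi_\epsilon$ in the double integral. For the inequality, I would cite the known lemma from the fractional $p$-Laplacian literature (Brasco–Franzina, as used in \cite{PBR2018, LGG2026}); its proof is an elementary convexity argument. Admissibility and integrability follow from the boundedness of $u$ and the lower bound $v_\epsilon\geq\epsilon$.
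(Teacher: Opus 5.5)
Your proof is correct and is essentially the paper's argument: the same test function $u^p/(v+\epsilon)^{p-1}$, the local and discrete Picone inequalities giving $H_{s,p}(v,\phi_\epsilon)\le\|u\|_{\mathbb{X}_0^{s,p}(\Omega)}^p$, and dominated convergence as $\epsilon\to0$. The differences are minor improvements. You read off $\lambda\le\lambda_1(V)$ directly from the limit, whereas the paper detours through an equality claim and the equality case of Picone to get $u=cv$. You also justify $v>0$ a.e. via Lemma~\ref{F-E.L1} with potential $V-\lambda$, which is the right tool, whereas the paper cites Theorem~\ref{F-E.T3}.
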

\begin{proof}
    We proceed by the method of contradiction. If possible, assume that $v$ does not change sign in $\Omega$. Without loss of generality, let $v \geq 0$. By Theorem \ref{F-E.T3}, we have $v>0$. Let $\psi_i$ be defined by 
    $$\psi_i=\frac{u^p}{\left(v+\frac{1}{i}\right)^{p-1}}, \text{ for all }i\in \mathbb{N}.$$
    Obviously,  ${\left(v+\frac{1}{i}\right)^{p-1}}\geq \frac{1}{i^{p-1}}$. Since we have $\Omega$ is a bounded domain, and using the fact that $u\in L^\infty(\Omega)$ (by Theorem \ref{F-E.T2}), we have $\psi_i\in L^p(\Omega)$. Similarly, we get
    \begin{align*}
        |\nabla \psi_i|&=\left|\frac{p\left(v+\frac{1}{i}\right)^{p-1}u^{p-1}\nabla u-(p-1)u^p\left(v+\frac{1}{i}\right)^{p-2}\nabla v}{\left(v+\frac{1}{i}\right)^{2(p-1)}}\right|\\
        &\leq C\left(\nabla u+\nabla v\right)\in L^p(\Omega).
    \end{align*}
    Here $C=C\left(i,p,\|u\|_{L^{\infty}(\Omega)}\right)>0$ is a constant. Also, $\psi_i=0 \in \mathbb{R}^N \setminus \Omega$. Therefore, it follows that $\psi_i\in \mathbb{X}_0^{s,p}(\Omega)$. Taking $\psi_i$ as a test function in the weak formulation in Definition \ref{F-WF} and applying Picone's identity (see \cite[Theorem 1.1]{AH1998}, \cite[Lemma 6.2]{A2008} and \cite[Lemma 3.5]{PBR2018}), we get
    \begin{align}\label{F-E.T3'-1}
        0&\leq H_{s,p}(u,u)-H_{s,p}(v,\psi_i)\nonumber\\
        &= \int_\Omega\left(\lambda_1(V)-V\right) |u|^p dx-\int_\Omega\left(\lambda-V\right)v^{p-1}\psi_idx.
    \end{align}
    Observe that $v^{p-1}\psi_i \leq u^p$ for all $i \in \mathbb{N}$. Since $u\in L^p(\Omega)\cap L^{\frac{pq}{q-1}}(\Omega)$, using  the H\"older inequality, we deduce that $\left(\lambda-V\right)u^p \in L^1(\Omega)$. Also, note that
    $$\left(\lambda-V\right)v^{p-1}\psi_i \rightarrow \left(\lambda-V\right)u^p \text{ a.e. in }\Omega.$$
    Recall that we have $\lambda>\lambda_1$. Hence, applying dominated convergence theorem in \eqref{F-E.T3'-1} yields
    $$H_{s,p}(u,u)-H_{s,p}(v,\psi_i)=0.$$
    Again from Picones's identity, 
    we obtain $u=cv$ for some constant $c$, which contradicts $\lambda\neq \lambda_1$. This proves the theorem. 
\end{proof}
We end this section with the following two results on the first eigenvalue to \eqref{F}. 
\begin{theorem}\label{F-E.T4}
    The first eigenvalue $\lambda_1(V)$ of \eqref{F} is simple.
\end{theorem}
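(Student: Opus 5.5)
We would prove simplicity with the same Picone-type argument used in Theorem \ref{F-E.T3'}, now with both functions attached to $\lambda_1(V)$. Let $u,v$ be two nonzero eigenfunctions associated with $\lambda_1(V)$. By Theorem \ref{F-E.T3}, each has a strict sign in $\Omega$, so after replacing them by $\pm u$ and $\pm v$ we may assume $u>0$ and $v>0$ a.e. in $\Omega$. By Theorem \ref{F-E.T2}, both are bounded, and we normalize $\|u\|_{L^p(\Omega)}=\|v\|_{L^p(\Omega)}=1$. The goal is to show $u=cv$.

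\textbf{Step 1: the test functions.} For $i\in\mathbb{N}$ set $\psi_i=u^p/(v+\frac1i)^{p-1}$. Exactly as in the proof of Theorem \ref{F-E.T3'}, boundedness of $u$ gives $\psi_i\in\mathbb{X}_0^{s,p}(\Omega)$. Testing the equation of $v$ with $\psi_i$ gives
\begin{equation*}
H_{s,p}(v,\psi_i)=\int_\Omega(\lambda_1(V)-V)\,v^{p-1}\psi_i\,dx .
\end{equation*}
Picone's identity, in both its local and nonlocal forms (\cite[Theorem 1.1]{AH1998}, \cite[Lemma 3.5]{PBR2018}), yields $H_{s,p}(v,\psi_i)\le H_{s,p}(u,u)$. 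Here
\begin{equation*}
H_{s,p}(u,u)=\int_\Omega(\lambda_1(V)-V)u^p\,dx .
\end{equation*}

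\textbf{Step 2: passing to the limit.} We have $0\le v^{p-1}\psi_i\le u^p$ and $v^{p-1}\psi_i\to u^p$ a.e. Since $u\in L^{pq/(q-1)}(\Omega)$, the function $(|\lambda_1(V)|+|V|)u^p$ lies in $L^1(\Omega)$ by Hölder. Dominated convergence then shows that the right-hand side for $\psi_i$ converges to $\int_\Omega(\lambda_1(V)-V)u^p\,dx$. Hence the nonnegative Picone remainder $H_{s,p}(u,u)-H_{s,p}(v,\psi_i)$ tends to $0$.

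\textbf{Step 3: the equality case.} The remainder splits into a local part $\int_\Omega L(u,v_i)\,dx$, with $v_i=v+\frac1i$, and a nonlocal part, each with a nonnegative integrand. By Fatou's lemma the local Picone remainder $L(u,v)$ has zero integral. Since $L(u,v)=0$ a.e. forces $\nabla(u/v)=0$ a.e. (the equality case in \cite{AH1998}), $u/v$ is constant on each connected component. Because $\Omega$ is a domain, this gives $u=cv$.

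Alternatively, the nonlocal remainder vanishing forces $u(x)/v(x)=u(y)/v(y)$ for a.e. $x,y$, which gives global proportionality directly without using connectedness.

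\textbf{Main obstacle.} The delicate step is the justification of Step 3. We must apply Fatou's lemma to the Picone remainders, using $v_i\to v$ and $\nabla v_i=\nabla v$, and then invoke the equality case of Picone's inequality rigorously. We would use the nonlocal characterization as the cleanest route, since it yields $u=cv$ a.e. in $\mathbb{R}^N$ at once. We would also note that no hypothesis $p\ge2$ is needed, because Picone's identity holds for all $1<p<\infty$.
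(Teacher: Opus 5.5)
Your proof is correct, but it takes a different route from the paper. The paper does not use Picone's identity for simplicity. It uses the ``hidden convexity'' of the Rayleigh quotient instead. For positive normalized first eigenfunctions $u,v$ it sets $w=\left(\frac{u^p+v^p}{2}\right)^{1/p}$. Then $\|w\|_{L^p(\Omega)}=1$ and $\int_\Omega Vw^p\,dx=\frac12\left(\int_\Omega Vu^p\,dx+\int_\Omega Vv^p\,dx\right)$. The convexity inequality $H_{s,p}(w,w)\le\frac12\left(H_{s,p}(u,u)+H_{s,p}(v,v)\right)$ (as in \cite[Theorem 4.5]{LGG2026}) then shows that $w$ is again a minimizer of $R_Q(V,\cdot)$. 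Equality in the strict convexity of $t\mapsto|t|^p$ forces $\nabla(u/v)=0$.

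That argument is purely variational. It needs only Theorem \ref{F-E.T1}: no weak formulation, no $L^\infty$ bound from Theorem \ref{F-E.T2}, and no approximation $v+\frac1i$ with a limit passage. Your argument does need the equation and the boundedness of $u$ to make $\psi_i$ admissible, plus the dominated convergence and Fatou steps. In return, one Picone computation covers both Theorem \ref{F-E.T3'} and simplicity.

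Your nonlocal equality case is also a genuine gain. For $a,b\ge 0$ and $c,d>0$, equality in the discrete Picone inequality holds iff $a/c=b/d$. Since $K_{s,p}>0$, this gives $u(x)/v(x)=u(y)/v(y)$ for a.e. $(x,y)\in\Omega\times\Omega$, hence $u=cv$ directly.

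That ending is more robust than the paper's, which, like your local variant, goes from $\nabla(u/v)=0$ a.e. to $u=cv$. That step tacitly needs connectedness of $\Omega$. It also needs $u/v\in W^{1,1}_{\mathrm{loc}}(\Omega)$, for instance via a local positive lower bound on $v$, which is never established. Your nonlocal route needs neither, so it is the right one to present as the main argument.
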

\begin{proof}
    Let $u,v>0$ be strictly positive eigenfunctions associated with $\lambda_1$ for \eqref{F} with $\|u\|_{L^p(\Omega)}=\|u\|_{L^p(\Omega)}=1$. Let us take
    $$w=\left(\frac{u^p+v^p}{2}\right)^{\frac{1}{p}}.$$
    Using Theorem \ref{F-E.T1} and proceeding as in the proof of (4.12) in the proof of \cite[Theorem 4.5]{LGG2026}, we obtain
    \begin{align}\label{F-E.T4-1}
        \lambda_1(V)&\leq H_{s,p}(w,w)+\int_\Omega V|w|^p dx \nonumber\\
        &\leq \frac{1}{2}\left(H_{s,p}(u,u)+\int_\Omega V|u|^p dx+H_{s,p}(v,v)+\int_\Omega V|v|^p dx\right)\nonumber\\
        &=\lambda_1(V).
    \end{align}
    Proceeding as in the proof of \cite[Theorem 4.5]{LGG2026} using the strict convexity of the map $t \mapsto |t|^p$, we get that equality holds in the penultimate step of \eqref{F-E.T4-1} only if $\left|\nabla \left(\frac{u}{v}\right)\right|=0$. i.e., there exists a constant $c$ such that $u=cv$. This proves the desired result.
\end{proof}

\begin{theorem}\label{F-E.T5}
    The first eigenvalue $\lambda_1(V)$ of \eqref{F} is isolated.
\end{theorem}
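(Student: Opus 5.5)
We argue by contradiction, following the standard scheme for $p$-Laplacian type operators (as in \cite{PBR2018, LGG2026}). By Theorem \ref{F-E.T1}, $\lambda_1(V)$ is the smallest eigenvalue, so it is isolated from below. Suppose it is not isolated from above. Then there are eigenvalues $\lambda_i>\lambda_1(V)$ with $\lambda_i\to\lambda_1(V)$, and eigenfunctions $u_i$ normalized so that $\|u_i\|_{L^p(\Omega)}=1$.

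\textbf{Step 1: compactness.} Testing the weak formulation of Definition \ref{F-WF} with $u_i$ gives
\begin{equation*}
\|u_i\|_{\mathbb{X}_0^{s,p}(\Omega)}^p=\lambda_i-\int_\Omega V|u_i|^p\,dx.
\end{equation*}
We bound the potential term by H\"older's inequality. Together with the continuous embedding into $L^{\frac{pq}{q-1}}(\Omega)$ (Remark \ref{F-R2}) and interpolation between $L^p$ and $L^{p^*}$ (or a high $L^r$ when $p\ge N$), this gives $\int_\Omega|V||u|^p\le \varepsilon\|u\|^p_{\mathbb{X}_0^{s,p}(\Omega)}+C_\varepsilon\|u\|^p_{L^p(\Omega)}$. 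Hence $(u_i)$ is bounded in $\mathbb{X}_0^{s,p}(\Omega)$.

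By Theorem \ref{F-RSB} and the compact embeddings of Theorem \ref{F-ET}, after passing to a subsequence we have $u_i\rightharpoonup u$ weakly in $\mathbb{X}_0^{s,p}(\Omega)$ and $u_i\to u$ strongly in $L^p(\Omega)$ and in $L^{\frac{pq}{q-1}}(\Omega)$. In particular $\|u\|_{L^p(\Omega)}=1$ and $\int_\Omega V|u_i|^p\to\int_\Omega V|u|^p$. Weak lower semicontinuity of the norm then yields
\begin{equation*}
\|u\|^p_{\mathbb{X}_0^{s,p}(\Omega)}+\int_\Omega V|u|^p\le\liminf_i\lambda_i=\lambda_1(V).
\end{equation*}
So $u$ attains $R_Q(V)$, and by Theorem \ref{F-E.T1} it is a first eigenfunction. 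By Theorems \ref{F-E.T3} and \ref{F-E.T4}, we may assume $u>0$ a.e. in $\Omega$, possibly after replacing $u_i$ by $-u_i$. Since the norms converge and the space is uniformly convex, the convergence $u_i\to u$ is also strong in $\mathbb{X}_0^{s,p}(\Omega)$, although we only need the $L^p$ convergence below.

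\textbf{Step 2: the nodal sets are large.} By Theorem \ref{F-E.T3'}, each $u_i$ changes sign, so $(u_i)_-\not\equiv0$. Because $u_i\to u>0$ in $L^p(\Omega)$ and a.e., Egorov's theorem gives $|\{u_i<0\}|\to0$. Now test the equation with $(u_i)_-$. For the local part we get exactly $-\int|\nabla (u_i)_-|^p$. For the nonlocal part we use the elementary inequality $|a-b|^{p-2}(a-b)(a_--b_-)\le -|a_--b_-|^p$. Together these give
\begin{equation*}
\|(u_i)_-\|^p_{\mathbb{X}_0^{s,p}(\Omega)}\le\int_\Omega(\lambda_i-V)(u_i)_-^p\,dx.
\end{equation*}
We bound the right-hand side by H\"older's inequality on the set $\Omega_i^-=\{u_i<0\}$:
\begin{equation*}
\int_{\Omega_i^-}(|\lambda_i|+|V|)(u_i)_-^p\le\bigl(|\lambda_i||\Omega_i^-|^{1/q}+\|V\|_{L^q(\Omega_i^-)}\bigr)\|(u_i)_-\|^p_{L^{\frac{pq}{q-1}}(\Omega)}.
\end{equation*}
The embedding $\mathbb{X}_0^{s,p}(\Omega)\hookrightarrow L^{\frac{pq}{q-1}}(\Omega)$ controls the last factor by $C\|(u_i)_-\|^p_{\mathbb{X}_0^{s,p}(\Omega)}$. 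Dividing by this nonzero quantity gives
\begin{equation*}
1\le C\bigl(|\lambda_i||\Omega_i^-|^{1/q}+\|V\|_{L^q(\Omega_i^-)}\bigr).
\end{equation*}
Since $|\Omega_i^-|\to0$, absolute continuity of $\int|V|^q$ and the boundedness of $\lambda_i$ make the right-hand side tend to $0$. This is a contradiction, so $\lambda_1(V)$ is isolated.

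\textbf{Main obstacle.} The delicate step is the nonlocal sign inequality for $(u_i)_-$ in Step 2. One must check carefully, case by case on the signs of $u_i(x)$ and $u_i(y)$, that the cross terms have the favourable sign, so that $H_{s,p}(u_i,-(u_i)_-)\ge\|(u_i)_-\|^p$. A secondary point is the uniform-in-$i$ control of $\int V|u_i|^p$ when $V$ changes sign, which is handled by the $\varepsilon$-interpolation bound in Step 1.
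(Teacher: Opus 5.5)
Your proof is correct and follows essentially the same route as the paper: you test with $(u_i)_-$ to obtain $\|(u_i)_-\|^p_{\mathbb{X}_0^{s,p}(\Omega)}\le\int_\Omega(\lambda_i-V)(u_i)_-^p\,dx$, which keeps the negative sets from shrinking, and you contradict this with the convergence of the normalized eigenfunctions to a positive first eigenfunction. The differences are only technical. You use the $L^{\frac{pq}{q-1}}$ embedding together with absolute continuity of $E\mapsto\|V\|_{L^q(E)}$, where the paper uses an auxiliary exponent $\eta$ and an explicit lower bound $C(\lambda)$. You also obtain $|\{u_i<0\}|\to0$ from convergence in measure rather than from the paper's assertion that $u>\alpha$ on a compact Egorov set, which makes your version slightly cleaner.
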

\begin{proof}
    Let $(\lambda,v)$ be an eigenpair of \eqref{F} with $\lambda>\lambda_1$. Then, $v_-\in \mathbb{X}_0^{s,p}(\Omega)$. Taking $v_-$ as a test function in the weak formulation given by Definition \ref{F-WF} and following the proof of (4.33) in \cite[Theorem 4.5]{LGG2026}, we have 
    \begin{align}\label{F-E.T5-1}
        \lambda\int_\Omega |v_-|^p dx&=-H_{s,p}(v,v_-)+\int_\Omega V|v_-|^p dx \nonumber\\
        &\geq \|v_-\|_{\mathbb{X}_0^{s,p}(\Omega)}^p+\int_\Omega V|v_-|^p dx.
    \end{align}
     Fix a real number $\eta>\frac{pq}{q-1}$ such that $\eta<p^*$ when $p<N$.
     Clearly, $\eta>p$ and $\frac{\eta}{\eta-p}<q$. Using the continuous embedding of $\mathbb{X}_0^{s,p}(\Omega)$ in $L^\eta(\Omega)$ from Theorem \ref{F-ET} and then applying the H\"older inequality twice in \eqref{F-E.T5-1}, we deduce 
     \begin{align}\label{F-E.T5-2}
         \|v_-\|_{L^\eta(\Omega)}^p&\leq C\|v_-\|_{\mathbb{X}_0^{s,p}(\Omega)}^p\nonumber\\
         &\leq C\int_{\Omega \cap \{v<0\}} \left(\lambda-V\right)|v_-|^p dx \nonumber\\
         &\leq C \left(\int_{\Omega \cap \{v<0\}}(|\lambda|+|V|)^{\frac{\eta}{\eta-p}}dx\right)^\frac{\eta-p}{\eta}\|u\|_{L^\eta(\Omega)}^p \nonumber\\
         &\leq C \left(|\lambda|\cdot|\Omega \cap \{v<0\}|^{\frac{\eta-p}{\eta}}+\|V\|_{L^q(\Omega)}^{\frac{\eta-p}{\eta}}\cdot|\Omega \cap \{v<0\}|^{1-\frac{\eta}{q(\eta-p)}}\right)\|u\|_{L^\eta(\Omega)}^p.
     \end{align}
     Using the fact that $\|v_-\|_{L^\eta(\Omega)}>0$ (by Theorem \ref{F-E.T3'}) in \eqref{F-E.T5-2}, it follows that
     \begin{align*}\label{F-E.T5-3}
         1&\leq C \left(|\lambda|\cdot|\Omega \cap \{v<0\}|^{\frac{\eta-p}{\eta}}+\|V\|_{L^q(\Omega)}^{\frac{\eta-p}{\eta}}\cdot|\Omega \cap \{v<0\}|^{1-\frac{q(\eta-p)}{\eta}}\right) \\
         &\leq 2C \max \left\{|\lambda|\cdot|\Omega \cap \{v<0\}|^{\frac{\eta-p}{\eta}},\|V\|_{L^q(\Omega)}^{\frac{\eta-p}{\eta}}\cdot|\Omega \cap \{v<0\}|^{1-\frac{q(\eta-p)}{\eta}}\right\}.
     \end{align*}
     Thus, we get
     \begin{equation}\label{F-E.T5-4}
         |\Omega \cap \{v<0\}| \geq C\min\left\{(|\lambda|+1)^{\frac{-\eta}{\eta-p}},\|V\|_{L^q(\Omega)}^{\frac{-(\eta-p)}{\eta-q(\eta-p)}} \right\}=C(\lambda),
     \end{equation}
     where $C(\lambda)>0$ decreases as $|\lambda|$ increases. Replacing $v$ by $-v$ in the previous steps, we obtain \eqref{F-E.T5-4} with $\Omega \cap \{v<0\}$ replaced by $\Omega \cap \{v>0\}$. The rest of the proof follows by the method of contradiction. If possible, assume that $\lambda_1(V)$ is not isolated. Consider a sequence of eigenpairs $(\mu_i,u_i)$ satisfying $\mu_i>\lambda_1(V)$ for all $i\in \mathbb{N}$ with $\mu_i \searrow \lambda_1(V)$ and $\|u_i\|_{L^p(\Omega)=1}$ for all $i\in \mathbb{N}$. Since $(\mu_i)$ is a bounded sequence, we choose $\mu > 0$ such that
     $$|\mu\geq |\mu_i| \text{ for all }i \in \mathbb{N}.$$
     Following steps similar to \cite[Theorem 4.5]{LGG2026}, we get a weak limit $u\in \mathbb{X}_0^{s,p}(\Omega)$ of a subsequence of $(u_i)$ (still denoted by $(u_i)$) with $\|u\|_{L^p(\Omega)}=1$, such that $u$ is an eigenfunction associated with $\lambda_1(V)$ for \eqref{F}. Without loss of generality, we may assume $u>0$ by Theorem \ref{F-E.T3}. However, we have
     \begin{equation}\label{F-E.T5-4'}
         |\Omega \cap \{\mu_i<0\}| \geq C(\mu) \text{ for all }i \in \mathbb{N},
     \end{equation}
     by \eqref{F-E.T5-4}. Let $d=\frac{1}{4}C(\mu)$. By Egorov's theorem, there exists a compact set $\Omega'\subset \Omega$ such that $|\Omega \setminus \Omega'|<d$ and $u_i \rightarrow u$ uniformly in $\Omega'$. By the compactness of $\Omega'$, we also obtain $\alpha>0$ such that $u>\alpha$ in $\Omega'$. Then, the uniform convergence of $(u_i)$ to $u$ in $\Omega'$ gives a $j\in \mathbb{N}$ that satisfies $|u_j-u|<\frac{\alpha}{2}$. Then, $u_j>0$ in $\Omega'$, which contradicts \eqref{F-E.T5-4'}. This establishes the proof. 
\end{proof}

\section{Optimization in a bounded, closed convex set}\label{F-s3}
\noindent In this section, we prove the existence of functions $V$ that optimize the eigenvalue $\lambda_1(V)$ for \eqref{F} on bounded, closed and convex subsets of $L^q(\Omega)$ and discuss some properties of these optimal functions. To establish the main theorems, we need to discuss some properties of the functional $\lambda_1(\cdot)$. We begin with the following lemma.
\begin{lemma}\label{F-L1}
    The functional $\lambda_1(\cdot)$ is concave.
\end{lemma}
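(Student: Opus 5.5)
The argument rests on the variational characterization from Theorem \ref{F-E.T1}. For every $V\in L^q(\Omega)$,
$$\lambda_1(V)=R_Q(V)=\inf\left\{R_Q(V,u):u\in \mathbb{X}_0^{s,p}(\Omega),\ \|u\|_{L^p(\Omega)}=1\right\},$$
where
$$R_Q(V,u)=\|u\|_{\mathbb{X}_0^{s,p}(\Omega)}^p+\int_\Omega V|u|^p\,dx.$$
Concavity then follows from the standard fact that a pointwise infimum of affine functionals is concave.

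First, I fix an admissible $u$, meaning $u\in\mathbb{X}_0^{s,p}(\Omega)$ with $\|u\|_{L^p(\Omega)}=1$. I check that $V\mapsto R_Q(V,u)$ is a finite affine functional on $L^q(\Omega)$. The term $\|u\|_{\mathbb{X}_0^{s,p}(\Omega)}^p$ does not depend on $V$. The term $V\mapsto\int_\Omega V|u|^p\,dx$ is linear, and it is finite by H\"older's inequality:
$$\left|\int_\Omega V|u|^p\,dx\right|\le \|V\|_{L^q(\Omega)}\,\|u\|^p_{L^{\frac{pq}{q-1}}(\Omega)}<\infty.$$
The right-hand side is finite because of the continuous embedding of Remark \ref{F-R2}.

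Second, I take $V_1,V_2\in L^q(\Omega)$ and $t\in[0,1]$. For every admissible $u$, linearity in $V$ gives
$$R_Q(tV_1+(1-t)V_2,u)=t\,R_Q(V_1,u)+(1-t)\,R_Q(V_2,u)\ \ge\ t\,\lambda_1(V_1)+(1-t)\,\lambda_1(V_2).$$
Taking the infimum over admissible $u$ and applying Theorem \ref{F-E.T1} to the potential $tV_1+(1-t)V_2\in L^q(\Omega)$ yields
$$\lambda_1(tV_1+(1-t)V_2)\ge t\lambda_1(V_1)+(1-t)\lambda_1(V_2),$$
which is the claimed concavity.

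There is no real obstacle here. The one point to state explicitly is that $\lambda_1(V)$ is a finite real number for every $V\in L^q(\Omega)$. This guarantees the infimum is not $-\infty$, so the inequalities are between real numbers. Finiteness holds because Theorem \ref{F-E.T1} shows the minimum is attained, and the argument also uses Lemma \ref{F-L2}, which makes $R_Q$ well defined.
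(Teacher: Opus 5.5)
Your proof is correct and is the same argument as the paper's: for each admissible $u$ the map $V\mapsto R_Q(V,u)$ is affine, so $t\lambda_1(V_1)+(1-t)\lambda_1(V_2)\le R_Q(tV_1+(1-t)V_2,u)$, and taking the infimum over admissible $u$ via Theorem \ref{F-E.T1} gives concavity. Your version also keeps the factor $(1-t)$ on the second term, which the paper's displayed inequality accidentally drops.
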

\begin{proof}
    Let $V_1,V_2\in L^q(\Omega)$ and $0\leq t\leq 1$. Clearly, $tV_1+(1-t)V_2\in L^q(\Omega)$. Using the characterization of $\lambda_1(\cdot)$ in Theorem \ref{F-E.T1}, we get
    \begin{align}\label{F-L1-1}
        t\lambda_1(V_1)+(1-t)\lambda_1(V_2)&\leq t\left(\|u\|_{\mathbb{X}_0^{s,p}(\Omega)}^p+\int_\Omega V_1|u|^p dx\right)+\left(\|u\|_{\mathbb{X}_0^{s,p}(\Omega)}^p+\int_\Omega V_2|u|^p dx\right)\nonumber\\
        &=\|u\|_{\mathbb{X}_0^{s,p}(\Omega)}^p+\int_\Omega \left(tV_1+(1-t)V_2\right)|u|^p dx,
    \end{align}
    for all $u\in \mathbb{X}_0^{s,p}(\Omega)$ with $\|u\|_{L^p(\Omega)}=1$. Taking the infimum in \eqref{F-L1-1} over the set $\{u\in \mathbb{X}_0^{s,p}(\Omega):\|u\|_{L^p(\Omega)}=1\}$, we deduce  
    $$t\lambda_1(V_1)+(1-t)\lambda_1(V_2)\leq \lambda_1\left(tV_1+(1-t)V_2\right),$$
    which proves the desired result.
\end{proof}

\begin{lemma}\label{F-L2}
    Let $\mathcal{A}\subset L^q(\Omega)$ and $M>0$ be a constant such that $\|V\|_{L^q(\Omega)}\leq M$ for every $V\in \mathcal{A}$. Then, there exists a constant $C>0$ such that $\lambda_1(V)\leq C$ for all $V\in \mathcal{A}$. Also, for each $\delta>0$, there exists a constant $C(\delta)>0$ such that 
    $$\left|\int_\Omega V|u|^p dx\right|\leq \delta\|u\|_{\mathbb{X}_0^{s,p}(\Omega)}^p+C(\delta)\|V\|_{L^q(\Omega)}\|u\|_{L^p(\Omega)}^p,$$
    for all $V\in \mathcal{A}$ and $u\in \mathbb{X}_0^{s,p}(\Omega)$.
\end{lemma}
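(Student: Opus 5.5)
The upper bound comes straight from the Rayleigh quotient characterization in Theorem \ref{F-E.T1}. Fix any $\varphi\in C_c^\infty(\Omega)$ with $\|\varphi\|_{L^p(\Omega)}=1$. Then $\varphi\in L^\infty(\Omega)$, so $|\varphi|^p\in L^{q'}(\Omega)$ with $q'=\frac{q}{q-1}$. For every $V\in\mathcal{A}$, H\"older's inequality gives
$$\lambda_1(V)\leq \|\varphi\|_{\mathbb{X}_0^{s,p}(\Omega)}^p+\|V\|_{L^q(\Omega)}\|\varphi\|_{L^{pq'}(\Omega)}^p\leq \|\varphi\|_{\mathbb{X}_0^{s,p}(\Omega)}^p+M\|\varphi\|_{L^{pq'}(\Omega)}^p=:C.$$
This is a constant independent of $V$, so the first assertion holds.

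For the interpolation inequality, I would start from H\"older with exponents $q$ and $q'$:
$$\left|\int_\Omega V|u|^pdx\right|\leq \|V\|_{L^q(\Omega)}\|u\|_{L^{pq'}(\Omega)}^p.$$
Next I would interpolate $L^{pq'}$ between $L^p$ and $L^\eta$. Here $\eta$ is the exponent already used in the proof of Theorem \ref{F-E.T5}: take $\eta>pq'$, with $\eta<p^*$ when $p<N$ and any finite $\eta$ otherwise. Since $q>\max\{N/p,1\}$, we have $p<pq'<p^*$ when $p<N$, so such an $\eta$ exists. Writing $\frac{1}{pq'}=\frac{\theta}{p}+\frac{1-\theta}{\eta}$ with $\theta\in(0,1)$, we get
$$\|u\|_{L^{pq'}}^p\leq \|u\|_{L^p}^{p\theta}\|u\|_{L^\eta}^{p(1-\theta)}\leq C_0\|u\|_{L^p}^{p\theta}\|u\|_{\mathbb{X}_0^{s,p}}^{p(1-\theta)},$$
using the continuous embedding of Theorem \ref{F-ET}.

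The key step is Young's inequality with conjugate exponents $\frac{1}{1-\theta}$ and $\frac1\theta$, which turns this product into a sum. A naive application leaves a factor $\|V\|_{L^q}$ multiplying the gradient term, so I would absorb it into the small parameter. Applying Young with a parameter depending on $\delta/M$,
$$\|V\|_{L^q}C_0\|u\|_{L^p}^{p\theta}\|u\|_{\mathbb{X}_0^{s,p}}^{p(1-\theta)}\leq \|V\|_{L^q}\Big(\tfrac{\delta}{M}\|u\|_{\mathbb{X}_0^{s,p}}^p+C(\delta/M)\|u\|_{L^p}^p\Big).$$
Since $\|V\|_{L^q}\leq M$, the first term is at most $\delta\|u\|_{\mathbb{X}_0^{s,p}}^p$. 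This yields the claimed inequality with $C(\delta)$ depending on $\delta$, $M$, $\theta$ and $C_0$ only, uniformly in $V\in\mathcal{A}$.

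Two points need checking. First, the exponent bookkeeping: $\theta\in(0,1)$ must hold, which follows from $p<pq'<\eta$. Second, the case $p\geq N$, where there is no $p^*$; there Theorem \ref{F-ET} gives embeddings into all finite $L^\eta$ (and into $L^\infty$ if $p>N$), so the same argument goes through.
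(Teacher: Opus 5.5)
Your proof is correct. For the first assertion it follows the paper: you test the Rayleigh quotient of Theorem~\ref{F-E.T1} with a fixed $L^p$-normalized function and apply H\"older's inequality with $\|V\|_{L^q(\Omega)}\le M$. For the second assertion you take a genuinely different route from the paper.

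\textbf{The paper's argument.} It argues by contradiction. It takes sequences $V_i\in\mathcal{A}$ and $u_i$ that violate the inequality with $C(\delta)=i$, and normalizes $\|u_i\|_{L^{pq/(q-1)}(\Omega)}=1$. Reflexivity of $\mathbb{X}_0^{s,p}(\Omega)$ and $L^q(\Omega)$, together with the compact embedding into $L^p(\Omega)$ and $L^{pq/(q-1)}(\Omega)$ (Remark~\ref{F-R2}), then force $\|V_i\|_{L^q(\Omega)}\to 0$. This gives $\|u_i\|_{\mathbb{X}_0^{s,p}(\Omega)}\to 0$, which contradicts the normalization.

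\textbf{Your argument.} It is direct, in three steps:
\begin{enumerate}[(i)]
\item H\"older's inequality, followed by interpolation of $L^{pq'}$ between $L^p$ and $L^\eta$ for some $\eta>pq'$.
\item The continuous embedding of Theorem~\ref{F-ET} into $L^\eta$. The existence of $\eta\in(pq',p^*]$ when $p<N$ is exactly where $q>N/p$ enters.
\item Young's inequality with the small parameter scaled by $\delta/M$, so that the factor $\|V\|_{L^q(\Omega)}$ in front of the gradient term is absorbed.
\end{enumerate}
Your exponent bookkeeping ($\theta\in(0,1)$ iff $p<pq'<\eta$) and your treatment of the cases $p\ge N$ are right.

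\textbf{Comparison.} Your approach gives an explicit constant, $C(\delta)$ of order $C_0^{1/\theta}(M/\delta)^{(1-\theta)/\theta}$. It uses only continuous embeddings, with no subsequence extraction or weak limits. The paper's approach is softer: it avoids choosing $\eta$ and computing $\theta$, but gives no quantitative information on $C(\delta)$. It also needs the contradiction hypothesis stated with strict inequality, so that $u_i\neq 0$ and the normalization makes sense. Both arguments rest on the same structural input, namely that $pq/(q-1)$ lies strictly below the critical Sobolev exponent.
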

\begin{proof}
    Let $V\in \mathcal{A}$. Fix a function $w\in \mathbb{X}_0^{s,p}(\Omega)$ that satisfies $\|w\|_{L^p(\Omega)}=1$. From Remark \ref{F-R2}, it follows  that $\|w\|_{L^{\frac{pq}{q-1}}(\Omega)}<\infty$. Then, by Theorem \ref{F-E.T1} and using the H\"older inequality, we have
    \begin{align*}
        \lambda_1(V)&\leq \|w\|_{\mathbb{X}_0^{s,p}(\Omega)}^p+\int_\Omega V|w|^p dx \\
        &\leq \|w\|_{\mathbb{X}_0^{s,p}(\Omega)}^p+\left(\int_\Omega |V|^q dx\right)^{\frac{1}{q}}\left(\int_\Omega|w|^{\frac{pq}{q-1}} dx\right)^{\frac{q-1}{q}}\\
        &\leq \|w\|_{\mathbb{X}_0^{s,p}(\Omega)}^p+M\|w\|_{L^{\frac{pq}{q-1}}(\Omega)}^p=C.
    \end{align*}
    This completes the proof of the first part of Lemma \ref{F-L2}. \\
    Next, fora contradictory, let us assume that there exist $\tilde{\delta}>0$, and sequences $(V_i)\subset \mathcal{A}$ and $(u_i)\subset \mathbb{X}_0^{s,p}(\Omega)$ for $i\in \mathbb{N}$ that satisfy
    \begin{equation}\label{F-L2-2}
        \left|\int_\Omega V_i|u_i|^p dx\right|\geq \tilde{\delta}\|u_i\|_{\mathbb{X}_0^{s,p}(\Omega)}^p+i\|V_i\|_{L^q(\Omega)}\|u_i\|_{L^p(\Omega)}^p, \text{ for all }i\in \mathbb{N}.
    \end{equation}
    Without loss of generality, assume that $\|u_i\|_{L^{\frac{pq}{q-1}}(\Omega)}=1$ for all $i\in \mathbb{N}$. 
    Employing the H\"older inequality in LHS of \eqref{F-L2-2}, we obtain
    \begin{align}\label{F-L2-3}
        \tilde{\delta}\|u_i\|_{\mathbb{X}_0^{s,p}(\Omega)}^p+i\|V_i\|_{L^q(\Omega)}\|u_i\|_{L^p(\Omega)}^p&\leq \left|\int_\Omega V_i|u_i|^p dx\right|\nonumber\\
        &\leq \left(\int_\Omega |V|^q dx\right)^{\frac{1}{q}}\left(\int_\Omega|u_i|^{\frac{pq}{q-1}} dx\right)^{\frac{q-1}{q}}\nonumber\\
        &=\|V\|_{L^q(\Omega)}\|u_i\|_{L^{\frac{pq}{q-1}}(\Omega)}^p\nonumber\\
        &\leq M.
    \end{align}
    Hence, we deduce that $(u_i)$ is a bounded sequence in $\mathbb{X}_0^{s,p}(\Omega)$. Using the reflexivity of $\mathbb{X}_0^{s,p}(\Omega)$ obtained by Theorem \ref{F-RSB}, we see that $(u_i)$ has a weakly convergent subsequence (still denoted by $(u_i)$) in $\mathbb{X}_0^{s,p}(\Omega)$. Let $u_i \rightharpoonup u$ in $\mathbb{X}_0^{s,p}(\Omega)$. By the compact embedding in Theorem \ref{F-T2} and Remark \ref{F-R2}, we deduce that $u_i \rightarrow u$ up to a subsequence (we denote it by $(u_i)$ itself) in both $L^p(\Omega)$ and $L^{\frac{pq}{q-1}}(\Omega)$. Therefore, $\|u\|_{L^{\frac{pq}{q-1}}(\Omega)}=1$. Note that $(V_i)$ is a bounded sequence in $L^q(\Omega)$. Since $L^q(\Omega)$ is also a reflexive Banach space, it has a weakly convergent subsequence (we denote it by $(V_i)$ itself) in $L^q(\Omega)$. Let $V_i \rightharpoonup V$ in $L^q(\Omega)$. It is easy to see that 
    \begin{equation*}
        0\leq \liminf\limits_{i \rightarrow \infty}i\|V_i\|_{L^q(\Omega)}\|u_i\|_{L^p(\Omega)}^p\leq \limsup\limits_{i \rightarrow \infty}i\|V_i\|_{L^q(\Omega)}\|u_i\|_{L^p(\Omega)}^p\leq M,
    \end{equation*}
    from  \eqref{F-L2-3}. Thus, $\|V_i\|_{L^q(\Omega)}\|u_i\|_{L^p(\Omega)}^p \rightarrow 0$ as $i \rightarrow \infty$. Note that $u_i \rightarrow u \neq 0$ in $L^p(\Omega)$. Hence, using the weak lower semi-continuity of the norm $\|\cdot\|_{L^q(\Omega)}$, we get
    \begin{equation}\label{F-L2-6}
        \|V\|_{L^q(\Omega)}.\|u\|_{L^p(\Omega)}^p \leq \liminf\limits_{i \rightarrow \infty}\|V_i\|_{L^q(\Omega)}.\|u_i\|_{L^p(\Omega)}^p=0.
    \end{equation}
    Therefore $V\equiv 0$. Now, taking $i \rightarrow \infty$ in \eqref{F-L2-3} and using \eqref{F-L2-6} and using the fact that $\|u\|_{\mathbb{X}_0^{s,p}(\Omega)}$ is weak lower semicontinuous, we arrive at $\|u\|_{\mathbb{X}_0^{s,p}(\Omega)}=0$. Thus, $u = 0$ a.e. in $\Omega$, which contradicts $\|u\|_{L^\frac{pq}{q-1}(\Omega)}=1$. 
\end{proof}
We now prove Theorem \ref{F-T1}, which gives the existence and characterizes the minimizer of $\lambda_1(\cdot)$ in a closed, convex, and bounded subset of $L^q(\Omega)$.\vspace{.2cm} \\
\noindent {\it{\bf{Proof of Theorem} \ref{F-T1}.}}
    Let $L_1:=\inf\{\lambda_1(V): V\in \mathcal{A}\}$. Then, there exists a sequence $(V_i)$ in $\mathcal{A}$ such that $\lambda_1(V_i) \rightarrow L_1$ as $i \rightarrow \infty$. As a consequence of Theorem \ref{F-E.T1}, there exist eigenfunctions $u_i$ corresponding to the first eigenvalue $\lambda_1(V_i)$ of each $V_i$, respectively, such that $\|u_i\|_{L^p(\Omega)}=1$ and
    \begin{equation}\label{F-T1-1}
        \|u_i\|_{\mathbb{X}_0^{s,p}(\Omega)}^p+\int_\Omega V_i|u_i|^p dx=\lambda_1(V_i), \ i\in \mathbb{N}.
    \end{equation}
    Since $(V_i) \subset \mathcal{A}$ is bounded in the reflexive space $L^q(\Omega)$, there exists a subsequence of $(V_i)$ (which we still denote by $(V_i)$) and $\underline{V}\in L^q(\Omega)$ such that $V_i \rightharpoonup \underline{V}$ in $L^q(\Omega)$. Since $\mathcal{A}$ is closed and convex, it is weakly closed. This implies $\underline{V}\in \mathcal{A}$. Using Lemma \ref{F-L2}, we get two constants $C_1=C(\frac{1}{2})>0$ and $C_2>0$ such that    
    \begin{equation}\label{F-T1-2}
        \left|\int_\Omega V_i|u_i|^p dx\right|\leq \frac{1}{2}\|u_i\|_{\mathbb{X}_0^{s,p}(\Omega)}^p+C_1\|V_i\|_{L^q(\Omega)}.\|u_i\|_{L^p(\Omega)}^p\leq \frac{1}{2}\|u_i\|_{\mathbb{X}_0^{s,p}(\Omega)}^p+C_1M,
    \end{equation}
    and $\lambda_1(V_i) \leq C_2$ for all $i\in \mathbb{N}$. Thus, combining \eqref{F-T1-1} and \eqref{F-T1-2}, we deduce
    \begin{align}\label{F-T1-3}
        C_2&\geq \|u_i\|_{\mathbb{X}_0^{s,p}(\Omega)}^p+\int_\Omega V_i|u_i|^p dx \nonumber\\
        &\geq \|u_i\|_{\mathbb{X}_0^{s,p}(\Omega)}^p-\left(\frac{1}{2}\|u_i\|_{\mathbb{X}_0^{s,p}(\Omega)}^p+C_1M\right) \nonumber\\
        &=\frac{1}{2}\|u_i\|_{\mathbb{X}_0^{s,p}(\Omega)}^p-C_1M.
    \end{align}
    From \eqref{F-T1-3}, we deduce that $(\|u_i\|_{\mathbb{X}_0^{s,p}(\Omega)}^p)$ is bounded by the constant $2(C_2+C_1M)$. Hence $(u_i)$ is bounded in $\mathbb{X}_0^{s,p}(\Omega)$. Since we have $\mathbb{X}_0^{s,p}(\Omega)$ is a reflexive Banach space from Theorem \ref{F-RSB}, $u_i \rightharpoonup \underline{u}$ in $\mathbb{X}_0^{s,p}(\Omega)$ up to a subsequence (still denoted by $(u_i)$). By the compact embedding in Theorem \ref{F-ET} and Remark \ref{F-R2}, we have $u_i \rightarrow \underline{u}$ up to a subsequence (denoted by $(u_i)$ itself) in $L^p(\Omega)$ and $L^\frac{pq}{q-1}(\Omega)$. Thus $\|\underline{u}\|_{L^p(\Omega)}=1$. From the weak lower semi-continuity of the norm $\|\cdot\|_{\mathbb{X}_0^{s,p}(\Omega)}$, we obtain
    \begin{equation}\label{F-T1-4}
        \|\underline{u}\|_{\mathbb{X}_0^{s,p}(\Omega)}\leq \liminf\limits_{i \rightarrow \infty}\|u_i\|_{\mathbb{X}_0^{s,p}(\Omega)}.
    \end{equation}
 Note that $\lambda_1(\underline{V})\geq L_1$ since $\underline{V}\in \mathcal{A}$. From the fact that $\Omega$ is bounded and using the dominated convergence theorem, it is easy to see that $|u_i|^p \rightarrow |u|^p$ in $L^{\frac{q}{q-1}}(\Omega)$ as $i \rightarrow\infty$. Also, let $\|V_i\|_{L^q(\Omega)}\leq M$ for all $i \in \mathbb{N}$. Using the weak convergence of $(V_i)$ to $V$ in $L^q(\Omega)$ and applying the H\"older inequality, it follows that
 \begin{align*}
     \left|\int_\Omega V_i |u_i|^p-\int_\Omega V|u|^pdx\right|&\leq \int_\Omega |V_i|\cdot\left||u_i|^p-|u|^p\right|dx+\int_\Omega |V_i-V|\cdot|u|^pdx\\
     &\leq M\left\||u_i|^p-|u|^p\right\|_{L^\frac{q}{q-1}(\Omega}+\int_\Omega |V_i-V|\cdot|u|^pdx\\
     &\rightarrow 0 \text{ as } i \rightarrow \infty.
 \end{align*}
 Thus, taking the limit infimum as $i \rightarrow \infty$  in \eqref{F-T1-1}, using \eqref{F-T1-4} and Theorem \ref{F-E.T1}, we get
    \begin{equation*}
        \lambda_1(\underline{V})\leq \|\underline{u}\|_{\mathbb{X}_0^{s,p}(\Omega)}^p+\int_\Omega \underline{V}|\underline{u}|^p dx\leq L_1\leq \lambda_1(\underline{V}).
    \end{equation*}
    Thus, $\underline{V}$ is a minimizer of $\lambda_1$ in $\mathcal{A}$ and $\underline{u}$ is an eigenfunction associated with $\lambda_1(\underline{V})$ for problem \eqref{F} with potential $V=\underline{V}$. By the choice of $\underline{V}$, we have
    $$\|\underline{u}\|_{\mathbb{X}_0^{s,p}(\Omega)}^p+\int_\Omega \underline{V}|\underline{u}|^p dx=\lambda_1(\underline{V}) \leq \lambda_1(V)\leq \|\underline{u}\|_{\mathbb{X}_0^{s,p}(\Omega)}^p+\int_\Omega V|\underline{u}|^p dx,$$
    for all $V\in \mathcal{A}$. Thus, we deduce
    $$\int_\Omega \underline{V}|\underline{u}|^p dx \leq \int_\Omega V|\underline{u}|^p dx, \ V\in \mathcal{A}.$$
    Now, assume that there exists $V\in \mathcal{A}$ such that
    $$\int_\Omega \underline{V}|\underline{u}|^p dx = \int_\Omega V|\underline{u}|^p dx.$$
    Then, we get
    \begin{equation*}
        \lambda_1(V) \leq \|\underline{u}\|_{\mathbb{X}_0^{s,p}(\Omega)}^p+\int_\Omega V|\underline{u}|^p dx=\lambda_1(\underline{V}).
    \end{equation*}
    By the choice of $\underline{V}$ and from Theorem \ref{F-E.T1}, we see that $\lambda_1(V)=\lambda_1(\underline{V})$ and $\underline{u}$ is an eigenfunction associated with $\lambda_1(V)$ for \eqref{F} with potential $V$. Using Theorem \ref{F-E.T3}, we may assume without loss of generality that $\underline{u}>0$ a.e. in $\Omega$. Hence, using the weak formulation of \eqref{F} in Definition \ref{F-WF}, we get
    \begin{equation*}
        H_{s,p}(\underline{u},\phi)+\int_{\Omega}\underline{V}|\underline{u}|^{p-2}\underline{u}\phi dx=H_{s,p}(\underline{u},\phi)+\int_{\Omega}V|\underline{u}|^{p-2}\underline{u} \phi dx,  
    \end{equation*} 
    for all $\phi \in C_c^\infty(\Omega)$. This implies
    \begin{equation*}
        \int_{\Omega}\left(\underline{V}-V\right)|u|^{p-2}u\phi dx=0, \text{ for all }\phi \in C_c^\infty(\Omega).
    \end{equation*}
    Therefore, $(\underline{V}-V)|\underline{u}|^{p-2}\underline{u} \equiv 0$ a.e. in $\Omega$. Since $\underline{u}>0$ a.e. in $\Omega$, we get $\underline{V}=V$ a.e. in $\Omega$. i.e., $\underline{V}$ is a unique minimizer in $\mathcal{A}$ of the map
    $$V\mapsto \int_{\Omega}V|\underline{u}|^{p}dx.$$
    This completes the proof.
\hfill\qedsymbol{}\vspace{.2cm} \\
We conclude this section by proving Theorem \ref{F-T2}, which establishes the existence of a unique maximizer of $\lambda_1(\cdot)$ in any closed, bounded and convex subset of $L^q(\Omega)$.\vspace{.2cm} \\ 
\noindent {\it{\bf{Proof of Theorem} \ref{F-T2}.}}
    Let $L_2:=\sup\{\lambda_1(V): V\in \mathcal{A}\}$. Then, there exists a sequence $(V_i)$ in $\mathcal{A}$ such that $\lambda_1(V_i) \rightarrow L_2$ as $i \rightarrow \infty$. Proceeding similarly to the proof of Theorem \ref{F-T1}, we get a subsequence (denoted by $(V_i)$ itself) and a sequence of functions $(u_i)\subset \{u\in \mathbb{X}_0^{s,p}(\Omega): \|u\|_{L^p(\Omega)}=1\}$ such that
    \begin{equation*}
        \|u_i\|_{\mathbb{X}_0^{s,p}(\Omega)}^p+\int_\Omega V_i|u_i|^p dx=\lambda_1(V_i),
    \end{equation*}
    and $V_i \rightharpoonup \overline{V}$ in $L^q(\Omega)$. Also, $\mathcal{A}$ is weakly closed and therefore, $\overline{V}\in \mathcal{A}$. By Theorem \ref{F-ET}, we also obtain $u_i \rightharpoonup \overline{u}$ in $\mathbb{X}_0^{s,p}(\Omega)$ and $u_i \rightarrow \overline{u}$ in $L^p(\Omega)$ (up to a subsequence). Thus, $\|\overline{u}\|_{L^p(\Omega)}=1$. Observe that by Remark \ref{F-R2}, $v^p\in L^{\frac{q}{q-1}}(\Omega)$ for all $v\in \mathbb{X}_0^{s,p}(\Omega)$. As a consequence of the H\"older inequality, it easily follows that
    $$\lim\limits_{i \rightarrow \infty}\int_{\Omega}V_i|v|^p dx=\int_{\Omega}\overline{V}|v|^p dx.$$
    From Theorem \ref{F-E.T1}, for each $j\in\mathbb{N}$, there exists $v_j\in \mathbb{X}_0^{s,p}(\Omega)$ satisfying
    \begin{align}\label{F-T2-2}
        \lambda_1(\overline{V})+\frac{1}{j}&\geq \|v_j\|_{\mathbb{X}_0^{s,p}(\Omega)}^p+\int_\Omega \overline{V}|v_j|^p dx \nonumber\\
        &=\lim\limits_{i \rightarrow \infty}\left(\|v_j\|_{\mathbb{X}_0^{s,p}(\Omega)}^p+\int_{\Omega}V_i|v_j|^p dx\right)\nonumber\\
        &\geq \lim\limits_{i \rightarrow \infty}\lambda_1(V_i)=L_2.
    \end{align}
    Note that $\lambda_1(\overline{V})\leq L_2$, since $\overline{V}\in \mathcal(A)$. Hence, taking the limit as $j \rightarrow \infty$ in \eqref{F-T2-2}, we deduce that $\lambda_1(\overline{V})=L_2$. Next, we establish the uniqueness of the maximizer $\overline{V}$. Let $\overline{V}'$ be another maximizer of $\lambda_1$ in $\mathcal{A}$. Then, $\lambda_1(\overline{V}')=L_2$. Define
    $$V=\frac{1}{2}(\overline{V}+\overline{V}').$$
    Since $\mathcal{A}$ is convex, $V\in \mathcal{A}$. By Lemma \ref{F-L1}, $\lambda_1$ is concave. Thus, 
    \begin{equation}\label{F-T2-3}
        \lambda_1(V) \geq \frac{1}{2}(\lambda_1(\overline{V})+\lambda_1(\overline{V}'))=L_2.
    \end{equation}
    Thus, $V$ is also a maximizer of $\lambda_1$ in $\mathcal{A}$. Let $\overline{u}, \overline{u}', u \in \{v\in \mathbb{X}_0^{s,p}(\Omega): \|v\|_{L^p(\Omega)}=1\}$ be the eigenfunctions associated with the first eigenvalues of \eqref{F} with potential $\overline{V}, \overline{V}'$ and $V$ respectively. 
    Assume $\overline{u}\not\equiv \pm u$. Then by Theorem \ref{F-E.T4}, we have
    \begin{equation}\label{F-T2-4'}
        \|u\|_{\mathbb{X}_0^{s,p}(\Omega)}^p+\int_\Omega \overline{V}|u|^p dx>\lambda_1(\overline{V})=L_2.
    \end{equation}
    Since equality holds in \eqref{F-T2-3}, using \eqref{F-T2-4'}, we deduce
    \begin{align*}
        L_2&=\|u\|_{\mathbb{X}_0^{s,p}(\Omega)}^p+\int_\Omega V|u|^p dx\\
        &=\frac{1}{2}\left(\|u\|_{\mathbb{X}_0^{s,p}(\Omega)}^p+\int_\Omega \overline{V}|u|^p dx\right)+\frac{1}{2}\left(\|u\|_{\mathbb{X}_0^{s,p}(\Omega)}^p+\int_\Omega \overline{V}'|u|^p dx\right)\\
        &>L_2.
    \end{align*}
    This contradiction gives us $\overline{u}\equiv \pm u$. Without loss of generality, let $\overline{u}\equiv  u$. Similarly, we deduce $\overline{u}'\equiv u$. Then, since $\lambda_1(\overline{V})=\lambda_1(\overline{V}')=L_2$, we have
    \begin{equation*}
        H_{s,p}(u,\phi)+\int_{\Omega}\overline{V}|u|^{p-2}u\phi dx= L_2\int_{\Omega}|u|^{p-2}u \phi dx=H_{s,p}(u,\phi)+\int_{\Omega}\overline{V}'|u|^{p-2}u\phi dx,  
    \end{equation*} 
    for all $\phi \in C_c^\infty(\Omega)$. Proceeding similarly to the proof of Theorem \ref{F-T1}, we deduce that $\overline{V}=\overline{V}'$ a.e. in $\Omega$. This establishes the uniqueness of the maximizer, completing the proof.
\hfill\qedsymbol{}
\section{Optimization in a set of rearrangements}\label{F-s4}
\noindent Let $V\in  L^q(\Omega)$. Consider the set of rearrangements $\mathcal{V}$ given by \eqref{F-RV}. Let us denote by $\mathcal{A}_V$ the weak closure of $\mathcal{R}_V$ in $L^q(\Omega)$.
From \cite[Theorem 8.16]{R1987}, we have $\mathcal{R}_V\subset L^q(\Omega)$ and $\|W\|_{L^q(\Omega)}=\|V\|_{L^q(\Omega)}$ for all $W\in \mathcal{R}_V$. According to \cite[Lemma 2.1]{B1989}, $\mathcal{A}_V$ is convex and therefore strongly closed. Thus, $\mathcal{A}_V$ is a bounded, closed, and convex subset of $L^q(\Omega)$. Using Theorem \ref{F-T1} and Theorem \ref{F-T2}, we infer that there exist a unique maximizer $\overline{V}$ and a minimizer $\underline{V}$  for $\lambda_1(\cdot)$ in $\mathcal{A}_V$. Let $\underline{u}$ be the positive eigenfunction in $\mathbb{X}_0^{s,p}(\Omega)$ associated with $\lambda_1(\underline{V})$ for \eqref{F} with $V-\underline{v}$, satisfying $\|\underline{u}\|_{L^p(\Omega)}=1$. We now proceed to the main theorem in this section.
\begin{theorem}\label{F-T5}
    The minimizer $\underline{V}$ of $\lambda_1(\cdot)$ in $\mathcal{A}_V$ is an element of $\mathcal{R}_V$. i.e., $\underline{V}$ also minimizes $\lambda_1(\cdot)$ in $\mathcal{R}_V$.  Moreover, there exists a decreasing function $f:\mathbb{R} \rightarrow\mathbb{R}$ such that $\underline{V}=f \circ|\underline{u}|^p$.
\end{theorem}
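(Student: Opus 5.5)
By Theorem \ref{F-T1}, applied with $\mathcal{A}=\mathcal{A}_V$, the minimizer $\underline{V}$ is the unique minimizer over $\mathcal{A}_V$ of the linear functional
$$L(W):=\int_\Omega W\,\underline{u}^p\,dx .$$
Here we use $\underline{u}>0$ a.e., which holds by Theorem \ref{F-E.T3}, and $|\underline{u}|^p=\underline{u}^p$. The plan is to transfer this linear minimization problem to Burton's rearrangement theory.

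The first step is to check the integrability needed there. By Remark \ref{F-R2}, $\underline{u}\in L^{\frac{pq}{q-1}}(\Omega)$, so $g:=\underline{u}^p\in L^{q'}(\Omega)$ with $q'=\frac{q}{q-1}$. Hence $L$ is a weakly continuous linear functional on $L^q(\Omega)$.

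The second step uses Burton's results (\cite[Lemma 2.4]{B1989} and the companion result in Burton's 1987 paper). For $V\in L^q$ and $g\in L^{q'}$, the functional $W\mapsto\int_\Omega Wg\,dx$ attains its minimum over $\mathcal{A}_V$ at some element of $\mathcal{R}_V$, because the extreme points of the weakly compact convex set $\mathcal{A}_V$ lie in $\mathcal{R}_V$. To see this concretely, replace $g$ by $-g$ and maximize. Take $W^*\in\mathcal{R}_V$ chosen decreasingly along the level sets of $g$; by the Hardy--Littlewood inequality $\int W g\ge \int W^*g$ for every $W\in\mathcal{R}_V$. Since $L$ is weakly continuous and $\mathcal{A}_V$ is the weak closure of $\mathcal{R}_V$, this inequality extends to all $W\in\mathcal{A}_V$. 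Thus $W^*$ minimizes $L$ on $\mathcal{A}_V$. By the uniqueness statement in Theorem \ref{F-T1}, $\underline{V}=W^*\in\mathcal{R}_V$. Since $\mathcal{R}_V\subset\mathcal{A}_V$, it follows that $\underline{V}$ also minimizes $\lambda_1$ over $\mathcal{R}_V$.

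The third step gives the representation $\underline{V}=f\circ\underline{u}^p$. Burton's lemma states that if the minimizer of $\int Wg$ over $\mathcal{R}_V$ is unique, then it equals $f\circ g$ for some decreasing $f$. We have uniqueness over the larger set $\mathcal{A}_V$, and hence over $\mathcal{R}_V$. Alternatively, one can argue directly. Construct $\underline{V}=f\circ g$ with $f(t):=V^\Delta(|\{g<t\}|)$ (or the analogous right-continuous version), where $V^\Delta$ denotes the decreasing rearrangement of $V$ on $(0,|\Omega|)$. This function is equimeasurable with $V$ whenever the level sets $\{g=t\}$ are null.

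The main obstacle I expect is exactly the possibility of flat zones of $g=\underline{u}^p$, that is, sets $\{\underline{u}=c\}$ of positive measure. On such a set the Hardy--Littlewood minimizer is not a function of $g$ in general: any rearrangement of the values of $V$ within the flat zone gives the same value of $L$. My plan is to use uniqueness to rule this out. If $|\{g=c\}|>0$ and $W^*$ were non-constant there, permuting its values inside $\{g=c\}$ would produce a different element of $\mathcal{R}_V$ with the same value of $L$. This contradicts the uniqueness from Theorem \ref{F-T1}. Therefore $\underline{V}$ is constant on each flat zone, and a decreasing $f$ with $\underline{V}=f\circ g$ exists, obtained by defining $f$ on the range of $g$ and extending it monotonically to all of $\mathbb{R}$.
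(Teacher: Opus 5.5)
Your proposal is correct and follows essentially the same route as the paper. You minimize the linear functional $W\mapsto\int_\Omega W\,\underline{u}^p\,dx$ over $\mathcal{A}_V$ and use Burton's results to get a minimizer in $\mathcal{R}_V$. You identify it with $\underline{V}$ through the uniqueness in Theorem~\ref{F-T1}, and then apply Burton's theorem that a unique optimizer is a monotone function of the weight (the paper's \cite[Theorem 5]{B1987}) to get $\underline{V}=f\circ\underline{u}^p$ with $f$ decreasing. Your direct treatment of flat zones of $\underline{u}^p$ just unpacks what that theorem provides.
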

\begin{proof}
    Define the map $F:\mathcal{A}_V \rightarrow\mathbb{R}$ by
    $$F(W)=\int_\Omega W|\underline{u}|^{p}dx.$$
    By \cite[Theorem 1, Theorem 4]{B1987}, we deduce that the maximum of $-F$ in $\mathcal{A}_V$ is attained at a function $V'\in \mathcal{R}_V$. Thus, the minimum of $F$ in $\mathcal{A}_V$ is attained at $V'$. Now, for any $W\in \mathcal{A}_V$, there exists a sequence $(V_i)$ in $\mathcal{R}_V$ such that $V_i \rightharpoonup W$ as $i \rightarrow\infty$. Hence, we have
    $$F(W)=\lim\limits_{i \rightarrow\infty} F(V_i) \geq F(V'),$$
    since each $V_i \in \mathcal{R}_V$. Thus, we deduce that $V'$ is a minimizer of $F$ in $\mathcal{A}_V$. However, by Theorem \ref{F-T1}, we have that $\underline{V}$ is the unique minimizer of $F$ in $\mathcal{A}_V$. Thus, $V'=\underline{V}$ a.e. in $\Omega$. Therefore, $\underline{V} \in \mathcal{R}_V$ and $\underline{V}$ minimize $\lambda_1(\cdot)$ in $\mathcal{R}_V$. Again, applying \cite[Theorem 5]{B1987} to $-F$, we obtain a decreasing function $f:\mathbb{R} \rightarrow\mathbb{R}$ satisfying $\underline{V}=f \circ|\underline{u}|^p$. The proof is now complete. 
\end{proof}

\section{Optimization in the closed unit ball}\label{F-s5}
\noindent In this section, we establish some properties of the optimizers of $\lambda_1(\cdot)$ in the closed unit ball in $L^q(\Omega)$, with the help of the results obtained in Section \ref{F-s3}. Throughout this section, we denote
\begin{align*}
    B(0,1)&=\{V\in L^q(\Omega): \ \|V\|_{L^q(\Omega)}<1\},\\
    \mathcal{A}&=\{V\in L^q(\Omega): \ \|V\|_{L^q(\Omega)}\leq 1\},  \\
    \text{and } B'&=\{V\in L^q(\Omega): \ \|V\|_{L^q(\Omega)}=1\}.
\end{align*}
In addition, we denote by $\overline{V}$, the unique maximizer, and by $\underline{V}$, a minimizer of $\lambda_1(\cdot)$ in $\mathcal{A}$. Since $\mathcal{A}$ is a bounded, closed, and convex set, the existence of $\overline{V}$ and $\underline{V}$ is guaranteed by Theorem \ref{F-T1} and Theorem \ref{F-T2}. Let $\overline{u}$ be the positive eigenfunction associated with $\lambda_1(\overline{V})$ and $\underline{u}$ be the positive eigenfunction associated with $\lambda_1(\underline{V})$ (where the positivity is ensured by Theorem \ref{F-E.T3}) for \eqref{F} with potentials $\overline{V}$ and $\underline{V}$, respectively. Before discussing the properties of $\overline{V}$ and $\underline{V}$, we need the following lemmas.
\begin{lemma}\label{F-L3}
    The functional $\lambda_1(\cdot)$ is continuous in $L^q(\Omega)$.
\end{lemma}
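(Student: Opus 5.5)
The plan is to prove a local Lipschitz estimate: for $V,W\in L^q(\Omega)$ with $\|V\|_{L^q(\Omega)},\|W\|_{L^q(\Omega)}\leq M$,
$$|\lambda_1(V)-\lambda_1(W)|\leq C(M)\,\|V-W\|_{L^q(\Omega)}.$$
Continuity at every point of $L^q(\Omega)$ then follows, since any convergent sequence is bounded.

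The first step is a uniform bound on normalized first eigenfunctions. Let $u_V\in\mathbb{X}_0^{s,p}(\Omega)$ be an eigenfunction for $\lambda_1(V)$ with $\|u_V\|_{L^p(\Omega)}=1$; it exists by Theorem \ref{F-E.T1}. Apply Lemma \ref{F-L2} to the bounded set $\{\|V\|_{L^q(\Omega)}\leq M\}$ with $\delta=\frac12$. This gives $\lambda_1(V)\leq C_2$, and, exactly as in \eqref{F-T1-3}, $\frac12\|u_V\|_{\mathbb{X}_0^{s,p}(\Omega)}^p-C_1M\leq C_2$. Hence $\|u_V\|_{\mathbb{X}_0^{s,p}(\Omega)}\leq K(M)$. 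By the continuous embedding of Remark \ref{F-R2}, we also get $\|u_V\|_{L^{\frac{pq}{q-1}}(\Omega)}^p\leq C_3(M)$, uniformly over the ball.

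The second step is the comparison. Use $u_W$ as a competitor in the Rayleigh quotient for $V$ (Theorem \ref{F-E.T1}):
$$\lambda_1(V)\leq \|u_W\|_{\mathbb{X}_0^{s,p}(\Omega)}^p+\int_\Omega V|u_W|^pdx=\lambda_1(W)+\int_\Omega (V-W)|u_W|^pdx.$$
By H\"older's inequality with exponents $q$ and $\frac{q}{q-1}$, this gives $\lambda_1(V)-\lambda_1(W)\leq \|V-W\|_{L^q(\Omega)}\|u_W\|_{L^{\frac{pq}{q-1}}(\Omega)}^p\leq C_3(M)\|V-W\|_{L^q(\Omega)}$. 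Swapping the roles of $V$ and $W$ gives the reverse inequality, which proves the estimate.

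The only real point is the uniform bound in the first step, which relies on the interpolation inequality in Lemma \ref{F-L2}. Without it, a sign-changing potential could a priori make $\|u_V\|_{\mathbb{X}_0^{s,p}(\Omega)}$ blow up. Once that bound is in hand, the rest is routine.

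As a fallback, continuity can also be argued structurally. $\lambda_1(\cdot)$ is concave by Lemma \ref{F-L1}. It is bounded below on bounded sets, by Lemma \ref{F-L2} with $\delta=\frac12$: $\lambda_1(V)\geq -C_1M$. A concave function on a Banach space that is locally bounded is continuous, so this gives the same conclusion. The direct Lipschitz argument above is preferable, though, because it is explicit.
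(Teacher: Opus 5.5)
Your proof is correct, and it takes a different route from the paper's.

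\textbf{How the paper argues.} It proves continuity sequentially for $V_i\to V$ in $L^q(\Omega)$. It gets $\limsup_i\lambda_1(V_i)\le\lambda_1(V)$ by testing with a fixed $u$. For the reverse inequality it bounds the normalized eigenfunctions $u_i$ in $\mathbb{X}_0^{s,p}(\Omega)$ through Lemma \ref{F-L2} and extracts a weakly convergent subsequence. It then concludes using the compact embedding into $L^p(\Omega)$ and $L^{\frac{pq}{q-1}}(\Omega)$ together with weak lower semicontinuity of the norm.

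\textbf{How you argue.} You use the same a priori bound from Lemma \ref{F-L2}, but you replace the compactness step by two-sided cross-testing, $\lambda_1(V)-\lambda_1(W)\le\int_\Omega (V-W)|u_W|^p\,dx$. This is the same pair of inequalities the paper later writes as \eqref{F-L4-1}--\eqref{F-L4-2}. It yields the stronger, quantitative statement that $\lambda_1(\cdot)$ is Lipschitz on bounded subsets of $L^q(\Omega)$, with no subsequence extraction and no semicontinuity argument. You also identify the key point correctly. The upper bound needs only a fixed test function. The lower bound is where the uniform control of $\|u_W\|_{L^{\frac{pq}{q-1}}(\Omega)}$, coming from the interpolation inequality, is indispensable.

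\textbf{What the paper's route gives in exchange.} It carries information about the eigenfunctions: along $V_i\to V$, the normalized positive eigenfunctions converge to the eigenfunction of $V$ (Remark \ref{F-R3}). This is exactly what is needed later in Lemma \ref{F-L4} to identify $(\lambda_1\circ\gamma)'(0)=\int_\Omega F|u|^p\,dx$. Your estimate alone does not give this. Combined with your uniform bound, however, it shows that $(u_{V_i})$ is a minimizing sequence for $R_Q(V)$, since
\[
\|u_{V_i}\|_{\mathbb{X}_0^{s,p}(\Omega)}^p+\int_\Omega V|u_{V_i}|^p\,dx=\lambda_1(V_i)+\int_\Omega (V-V_i)|u_{V_i}|^p\,dx\to\lambda_1(V).
\]
The eigenfunction convergence then follows from the same compactness argument together with simplicity.

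\textbf{Your fallback.} The argument via concavity (Lemma \ref{F-L1}) plus the local lower bound $\lambda_1(V)\ge -C_1M$ is also valid. It gives local Lipschitz continuity abstractly, through the standard fact that a convex function bounded above near a point is continuous there.
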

\begin{proof}
    Let $V_i \rightarrow V$ in $L^q(\Omega)$ as $i \rightarrow \infty$. Then, there exists $M>0$ such that $\|V_i\|_{L^q(\Omega)}\leq M$ for all $i \in \mathbb{N}$. For each $i$, let $u_i\in \mathbb{X}_0^{s,p}(\Omega)$ be the eigenfunction corresponding to $\lambda_1(V_i)$ for the problem \eqref{F} with potential $V_i$. Also, assume that $\|u_i\|_{L^p(\Omega)}=1$ and $u_i>0$ (which can be obtained by Theorem \ref{F-E.T3}) for all $i \in \mathbb{N}$. From Theorem \ref{F-E.T1}, we have
    \begin{equation}\label{F-L3-1}
        \|u_i\|_{\mathbb{X}_0^{s,p}(\Omega)}^p +\int_{\Omega}V_i|u_i|^p dx=\lambda_1(V_i), \text{ for all }i \in \mathbb{N}.
    \end{equation}
     Following arguments similar to the proof of \eqref{F-T1-2} and \eqref{F-T1-3} using Lemma \ref{F-L2}, we get two constants $C_1,C_2=C(\frac{1}{2})>0$ such that
    \begin{equation}\label{F-L3-2}
        \|u_i\|_{\mathbb{X}_0^{s,p}(\Omega)}^p \leq 2(\lambda_1(V_i)+C_2M).
    \end{equation}
    Indeed, from Theorem \ref{F-E.T1}, we have
    \begin{equation}\label{F-L3-3}
        \lambda_1(V_i)\leq \|u\|_{\mathbb{X}_0^{s,p}(\Omega)}^p +\int_{\Omega}V_i|u|^p dx,
    \end{equation}
    for all $i\in\mathbb{N}$ and all $u\in \mathbb{X}_0^{s,p}(\Omega)$ with $\|u\|_{L^p(\Omega)}=1$. By Remark \ref{F-R2}, $u^p\in L^{\frac{q}{q-1}}(\Omega)$. Using the H\"older inequality, we arrive at
    $$\lim\limits_{i \rightarrow \infty}\int_\Omega V_i |u|^p dx=\int_\Omega V |u|^p dx,$$
    for each $u\in \mathbb{X}_0^{s,p}(\Omega)$. Therefore, first taking the limit supremum as $i \rightarrow \infty$ and then taking the infimum over all $u\in \mathbb{X}_0^{s,p}(\Omega)$  in \eqref{F-L3-3}, we deduce
    \begin{align*}
        \limsup\limits_{i \rightarrow \infty}\lambda_1(V_i)&\leq \inf \left\{\|u\|_{\mathbb{X}_0^{s,p}(\Omega)}^p +\int_{\Omega}V|u|^p dx:\ u\in \mathbb{X}_0^{s,p}(\Omega), \ \|u\|_{L^p(\Omega)}=1\right\}\\
        &=\lambda_1(V).
    \end{align*}
    Choose a subsequence of $(V_i)$ (still denoted by $(V_i)$) such that
    \begin{equation}\label{F-L3-4}
        \lim\limits_{i \rightarrow \infty}\lambda_1(V_i)\leq \lambda_1(V).
    \end{equation}
    Combining \eqref{F-L3-2} and \eqref{F-L3-4}, we see that $(u_i)$ is a bounded sequence in $\mathbb{X}_0^{s,p}(\Omega)$. By Theorem \ref{F-RSB}, $\mathbb{X}_0^{s,p}(\Omega)$ is a reflexive, separable Banach space. Using this fact along with the compact embedding in Theorem \ref{F-ET} and Remark \ref{F-R2}, we deduce that $(u_i)$ has a subsequence (still denoted by $(u_i)$) such that $u_i \rightharpoonup u$ in $\mathbb{X}_0^{s,p}(\Omega)$ and $u_i \rightarrow u$ in both $L^p(\Omega)$ and $L^{\frac{pq}{(q-1)}}(\Omega)$. Thus, we have $\|u\|_{L^p(\Omega)}=1$. Observe that the norm $\|\cdot\|_{\mathbb{X}_0^{s,p}(\Omega)}$ is weak lower semi-continuous. Therefore, from the definition of $\lambda_1(V)$, we have
    \begin{equation}\label{F-L3-4'}
        \lambda_1(V)\leq \|u\|_{\mathbb{X}_0^{s,p}(\Omega)}^p +\int_{\Omega}V|u|^p dx\leq \liminf\limits_{i \rightarrow \infty}\left(\|u_i\|_{\mathbb{X}_0^{s,p}(\Omega)}^p +\int_{\Omega}V_i|u_i|^p dx\right).
    \end{equation}
    Taking the limit as $i \rightarrow \infty$ in \eqref{F-L3-1} and using \eqref{F-L3-4}, \eqref{F-L3-4'}, we obtain
    $$\lambda_1(V)\leq \|u\|_{\mathbb{X}_0^{s,p}(\Omega)}^p +\int_{\Omega}V|u|^p dx=\lim\limits_{i \rightarrow \infty}\lambda_1(V_i)\leq \lambda_1(V).$$
    Hence $u$ is an eigenfunction associated with $\lambda_1(V)$ and $\lambda_1(V)$ is the limit of $\lambda_1(V_i)$ as $i \rightarrow
    \infty$. 
\end{proof}
\begin{remark}\label{F-R3}
    The above proof also guaranties that if $V_i \rightarrow V$ in $L^q(\Omega)$ and $u_i\in \mathbb{X}_0^{s,p}(\Omega)$ with $\|u_i\|_{L^p(\Omega)}$ is a positive eigenfunction associated with $\lambda_1(V_i)$ for \eqref{F} with potential $V_i$, then the weak limit $u$ of $u_i$ in $\mathbb{X}_0^{s,p}(\Omega)$ is a positive eigenfunction associated with $\lambda_1(V)$ for the problem \eqref{F} with potential $V$. Also, $u_i$ converges weakly and by norm to $u$ in $ \mathbb{X}_0^{s,p}(\Omega)$, which gives
    $$u_i \rightarrow u \text{ as } i \rightarrow\infty \text{ in }  \mathbb{X}_0^{s,p}(\Omega).$$
\end{remark}
Note that for $V\in B'$, the tangent space of $B'$ at $V$ is given by
$$T(B';V):=\left\{F\in L^q(\Omega): \int_\Omega|V|^{q-2}VF dx=0\right\}.$$ 
Now, we prove the following lemma on a differentiability property of $\lambda_1(\cdot)$.
\begin{lemma}\label{F-L4}
    Let $V\in B', F\in T(B';V)$ and $\gamma:(-1,1) \rightarrow L^q(\Omega)$ be a differentiable curve with image contained in $B'$ that satisfies $\gamma(0)=V$ and $\gamma'(0)=F$. Then, the composite map $\lambda_1 \circ \gamma:(-1,1) \rightarrow \mathbb{R}$ given by $\lambda_1 \circ \gamma(t):=\lambda_1(\gamma(t))$ is continuous in $(-1,1)$ and differentiable at $0$. Also, if $u$ is a positive eigenfunction corresponding to $\lambda_1(V)$ for \eqref{F} with potential $V$ and $\|u\|_{L^p(\Omega)}=1$, then 
    $$(\lambda_1 \circ\gamma)'(0)=\int_\Omega F|u|^p dx.$$
\end{lemma}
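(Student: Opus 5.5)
Continuity of $\lambda_1\circ\gamma$ on $(-1,1)$ is immediate: $\gamma$ is differentiable, hence continuous, into $L^q(\Omega)$, and $\lambda_1(\cdot)$ is continuous on $L^q(\Omega)$ by Lemma \ref{F-L3}. The derivative at $0$ I would obtain by a two-sided squeeze on difference quotients, using the Rayleigh characterization of Theorem \ref{F-E.T1}. For each $t\in(-1,1)$, let $u_t$ be the positive eigenfunction associated with $\lambda_1(\gamma(t))$, normalized by $\|u_t\|_{L^p(\Omega)}=1$ (Theorems \ref{F-E.T3} and \ref{F-E.T4}), with $u_0=u$. Set $V_t:=\gamma(t)$ and write $V_t=V+tF+o(t)$ in $L^q(\Omega)$.

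\emph{Upper bound.} Test the Rayleigh quotient for $V_t$ with $u$:
$$\lambda_1(V_t)\le \|u\|_{\mathbb{X}_0^{s,p}(\Omega)}^p+\int_\Omega V_t|u|^pdx=\lambda_1(V)+\int_\Omega (V_t-V)|u|^pdx.$$
Since $|u|^p\in L^{\frac{q}{q-1}}(\Omega)$ by Remark \ref{F-R2}, H\"older gives $\int_\Omega (V_t-V)|u|^pdx=t\int_\Omega F|u|^pdx+o(t)$. Dividing by $t>0$ and taking $\limsup$, and then dividing by $t<0$ and taking $\liminf$, yields one-sided inequalities of the form $\limsup_{t\to0^+}\frac{\lambda_1(V_t)-\lambda_1(V)}{t}\le\int_\Omega F|u|^p\,dx$, together with the reversed inequality as $t\to 0^-$.

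\emph{Lower bound.} Swap the roles and test the Rayleigh quotient for $V$ with $u_t$:
$$\lambda_1(V)\le \|u_t\|_{\mathbb{X}_0^{s,p}(\Omega)}^p+\int_\Omega V|u_t|^pdx=\lambda_1(V_t)-\int_\Omega (V_t-V)|u_t|^pdx.$$
This gives $\lambda_1(V_t)-\lambda_1(V)\ge t\int_\Omega F|u_t|^pdx+o(t)\,\sup_t\||u_t|^p\|_{L^{q/(q-1)}}$. The remaining task is to show that $\int_\Omega F|u_t|^pdx\to\int_\Omega F|u|^pdx$ as $t\to0$, with $\|u_t\|_{L^{pq/(q-1)}}$ uniformly bounded.

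I expect this convergence to be the main obstacle, and I would handle it with Remark \ref{F-R3}. Since $V_t\to V$ in $L^q(\Omega)$, along any sequence $t_i\to0$ a subsequence of $u_{t_i}$ converges in $\mathbb{X}_0^{s,p}(\Omega)$ to a positive $L^p$-normalized first eigenfunction for $V$. By simplicity (Theorem \ref{F-E.T4}) together with positivity and normalization, this limit must be $u$, so the whole family satisfies $u_t\to u$. The embedding in Remark \ref{F-R2} then gives $u_t\to u$ in $L^{\frac{pq}{q-1}}(\Omega)$, hence $|u_t|^p\to|u|^p$ in $L^{\frac{q}{q-1}}(\Omega)$, which also gives the uniform bound. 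Pairing with $F\in L^q(\Omega)$ gives the required convergence. Combining the two bounds, both one-sided difference quotients converge to $\int_\Omega F|u|^pdx$, which proves differentiability at $0$ with the stated derivative.

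The hypotheses $\gamma(t)\in B'$ and $F\in T(B';V)$ play no role in this argument beyond supplying the differentiable curve.
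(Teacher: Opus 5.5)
Your proof is correct and follows the paper's argument. Both use the same two-sided Rayleigh-quotient comparison: testing with $u$ gives the upper bound and testing with $u_t$ gives the lower bound, with $u_t\to u$ obtained from Remark \ref{F-R3} together with positivity, normalization and simplicity (Theorem \ref{F-E.T4}). Your limit in $\int_\Omega F|u_t|^p\,dx$ goes through $|u_t|^p\to|u|^p$ in $L^{\frac{q}{q-1}}(\Omega)$ and H\"older, with the remainder $\|\gamma(t)-V-tF\|_{L^q(\Omega)}=o(t)$ absorbed by a uniform bound on $\|u_t\|_{L^{\frac{pq}{q-1}}(\Omega)}$; this is a cleaner justification than the paper's dominated-convergence step.
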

\begin{proof}
    The continuity of $\lambda_1 \circ \gamma$ follows from the continuity of $\lambda_1$ and $\gamma$. For each $t\in (-1,1)$, denote by $u_t \ (\in \mathbb{X}_0^{s,p}(\Omega))$, the unique positive function associated with $(\lambda_1 \circ\gamma)(t)$ for \eqref{F} with potential $\gamma(t)$, such that $\|u_t\|_{L^p(\Omega)}=1$ and
    $$\lambda_1(\gamma(t))=\|u_t\|_{\mathbb{X}_0^{s,p}(\Omega)}^p+\int_\Omega \gamma(t)|u_t|^p dx.$$
    Now, observe that by the definition of $\lambda_1$, we have
    \begin{align}\label{F-L4-1}
        (\lambda_1\circ \gamma) (t)-(\lambda_1 \circ \gamma)(0)&=\lambda_1(\gamma(t))-\lambda_1(V) \nonumber\\
        &\leq \|u\|_{\mathbb{X}_0^{s,p}(\Omega)}^p+\int_\Omega \gamma(t)|u|^p dx-\left(\|u\|_{\mathbb{X}_0^{s,p}(\Omega)}^p+\int_\Omega V|u|^p dx\right) \nonumber\\
        &=\int_\Omega (\gamma(t)-V)|u|^p dx.
    \end{align}
    Similarly, we also get 
    \begin{equation}\label{F-L4-2}
        (\lambda_1\circ \gamma) (t)-(\lambda_1 \circ \gamma)(0)\geq \int_\Omega (\gamma(t)-V)|u_t|^p dx.
    \end{equation}
    Consider any sequence $(t_i)$ in $(0,1)$ such that $t_i \rightarrow 0$ and $(\lambda_1\circ \gamma)(t_i) \rightarrow (\lambda_1\circ \gamma)(0)$ as $i \rightarrow \infty$ and
    \begin{equation}\label{F-L4-3}
        \lim\limits_{i \rightarrow \infty} \frac{(\lambda_1\circ \gamma)(t_i)-(\lambda_1\circ \gamma)(0)}{t}=\liminf\limits_{i \rightarrow \infty} \frac{(\lambda_1\circ \gamma)(t)-(\lambda_1\circ \gamma)(0)}{t}.
    \end{equation}
    By the continuity of $\gamma$, we deduce that $\gamma( t_i )\rightarrow\gamma(0)=V$ as $i \rightarrow \infty$. Now, by Remark \ref{F-R3}, we deduce that there exists an eigenfunction $v\in X_0^{s,p(\Omega)}$ associated with $\lambda_1(V)$ for \eqref{F}, such that $u_{t_i} \rightarrow v$ in $\mathbb{X}_0^{s,p}(\Omega)$ (and in $L^p(\Omega)$ up to a subsequence. In addition, $\|v\|_{L^p(\Omega)=1}$ and $v>0$ a.e. in $\Omega$. Therefore, it follows from Theorem \ref{F-E.T4} that $v\equiv u$. Since $\gamma'(0)=F$, we have
    $$\frac{(\gamma)(t_i)-V}{t}|u_{t_i}|^p  \rightarrow F|u|^p \text{ a.e. in } \Omega.$$
    Observe that by Remark \ref{F-R2}, making use of the H\"older inequality and the boundedness of $\Omega$, there exist constants $C_1, C_2>0$ such that
    $$\lim\limits_{i \rightarrow \infty} \left(\frac{(\gamma(t_i)-V)}{t}|u_{t_i}|^p \right)\leq \left( \gamma'(0)+C_1\right)\left(u+C_2\right)=(F+C_1)(u+C_2)\in L^1(\Omega).$$
    Recalling the characterization of $\lambda_1$ obtained by Theorem \ref{F-E.T1}, from \eqref{F-L4-2}, \eqref{F-L4-3} and using the dominated convergence theorem, we obtain
    \begin{align}\label{F-L4-4}
        \liminf\limits_{t \rightarrow 0^+} \frac{(\lambda_1\circ \gamma)(t_i)-(\lambda_1\circ \gamma)(0)}{t} &\geq  \int_\Omega \lim\limits_{i \rightarrow \infty} \left(\frac{\gamma(t_i)-V}{t}|u_{t_i}|^p\right) dx \nonumber\\
        &=\int_\Omega F|u|^p dx.
    \end{align}
    Proceeding in a similar way using \eqref{F-L4-1}, we also get
    \begin{equation}\label{F-L4-5}
        \limsup\limits_{t \rightarrow0^+} \frac{(\lambda_1\circ \gamma)(t_i)-(\lambda_1\circ \gamma)(0)}{t} \leq \int_\Omega F|u|^p dx.
    \end{equation}
    Observe that by Remark \ref{F-R2}, $|u|^p$ is in the dual $L^\frac{q}{q-1}(\Omega)$ of $L^q(\Omega)$. Now, it follows from \eqref{F-L4-1} that
    \begin{align}\label{F-L4-6}
        \liminf\limits_{t \rightarrow 0^-}\frac{(\lambda_1\circ \gamma)(t_i)-(\lambda_1\circ \gamma)(0)}{t}&=-\limsup\limits_{t \rightarrow 0^-}\frac{(\lambda_1\circ \gamma)(t_i)-(\lambda_1\circ \gamma)(0)}{(-t)} \nonumber\\
        & \leq -\limsup\limits_{t \rightarrow 0^-} \int_\Omega\frac{\gamma(t)-V}{(-t)}|u|^p dx \nonumber\\
        &=\int_\Omega \liminf\limits_{t \rightarrow 0^-}\frac{\gamma(t)-V}{t}|u|^p dx\nonumber\\
        &=\int_\Omega F|u|^p dx.
    \end{align}
    Similarly, we deduce that
    \begin{equation}\label{F-L4-7}
        \limsup\limits_{t \rightarrow 0^-}\frac{(\lambda_1\circ \gamma)(t_i)-(\lambda_1\circ \gamma)(0)}{t} \geq \int_\Omega F|u|^p dx.
    \end{equation}
    Combining \eqref{F-L4-4}--\eqref{F-L4-7}, we obtain the desired result.
\end{proof}
Now, we move on to the main results.
\begin{theorem}\label{F-T3}
    The functions $\overline{V}$ and $\underline{V}$ are in $B'$ with $\overline{V}>0$ and $\underline{V}<0$ a.e. in $\Omega$. Also, we have
    \begin{equation*}
        \int_\Omega F|\overline{u}|^p dx=\int_\Omega G|\underline{u}|^p dx=0,
    \end{equation*}
    for all $F\in T(B',\overline{V})$ and $G\in T(B', \underline{V})$.
\end{theorem}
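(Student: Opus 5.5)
The plan has three steps. First show that the optimizers lie on $B'$. Then obtain the first-order conditions from Lemma \ref{F-L4}. Finally read off the signs from a Lagrange-multiplier identity.

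\textbf{Step 1: $\overline{V},\underline{V}\in B'$.} The tool is a strict monotonicity property of $\lambda_1(\cdot)$. If $V\le W$ a.e. with $V\not\equiv W$, let $w>0$ be the normalized positive eigenfunction of $W$ (Theorem \ref{F-E.T3}). Then Theorem \ref{F-E.T1} gives
$$\lambda_1(W)=R_Q(W,w)>R_Q(V,w)\ge\lambda_1(V),$$
where the strict inequality holds because $w>0$ a.e. In particular $\lambda_1(V+c)=\lambda_1(V)+c$ for every constant $c$.

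Suppose $\|\overline{V}\|_{L^q(\Omega)}<1$. Then $\overline{V}+\varepsilon\in\mathcal{A}$ for small $\varepsilon>0$, since $\|\varepsilon\|_{L^q(\Omega)}=\varepsilon|\Omega|^{1/q}$. But $\lambda_1(\overline{V}+\varepsilon)=\lambda_1(\overline{V})+\varepsilon$, contradicting maximality. Likewise, if $\|\underline{V}\|_{L^q(\Omega)}<1$, then $\underline{V}-\varepsilon\in\mathcal{A}$ strictly lowers $\lambda_1$, contradicting minimality. Hence both optimizers lie in $B'$.

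\textbf{Step 2: first-order conditions.} Fix $V\in\{\overline{V},\underline{V}\}$ and $F\in T(B';V)$. Define
$$\gamma(t)=\frac{V+tF}{\|V+tF\|_{L^q(\Omega)}}.$$
Since $q>1$, the norm $\|\cdot\|_{L^q(\Omega)}$ is differentiable at $V\neq0$, with derivative in direction $F$ equal to $\int_\Omega|V|^{q-2}VF\,dx=0$. So $\gamma$ is a curve in $B'$, defined and differentiable for $|t|$ small, with $\gamma(0)=V$ and $\gamma'(0)=F$. Rescaling $t$ puts it on $(-1,1)$; the factor from rescaling multiplies $F$ and does not affect the conclusion. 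This construction of an admissible curve is the technical point I expect to need the most care, because it must satisfy the hypotheses of Lemma \ref{F-L4}. Only continuity together with differentiability at $0$ is actually used there.

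Since $\gamma(t)\in B'\subset\mathcal{A}$, the function $\lambda_1\circ\gamma$ has an interior maximum (for $\overline{V}$) or an interior minimum (for $\underline{V}$) at $t=0$. Lemma \ref{F-L4} says it is differentiable at $0$, so $(\lambda_1\circ\gamma)'(0)=0$, and the same lemma evaluates this derivative. This gives
$$\int_\Omega F|\overline{u}|^p\,dx=0\quad\text{and}\quad\int_\Omega G|\underline{u}|^p\,dx=0$$
for all $F\in T(B';\overline{V})$ and $G\in T(B';\underline{V})$.

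\textbf{Step 3: signs.} By Remark \ref{F-R2}, $|\overline{u}|^p\in L^{q/(q-1)}(\Omega)$. The functional $F\mapsto\int_\Omega F|\overline{u}|^p\,dx$ vanishes on the kernel of the functional $F\mapsto\int_\Omega|\overline{V}|^{q-2}\overline{V}F\,dx$. By the standard annihilator argument there is a constant $c$ with
$$|\overline{u}|^p=c\,|\overline{V}|^{q-2}\overline{V}\ \text{ a.e.}$$
Since $\overline{u}>0$ a.e., we get $c\neq0$, and $\overline{V}$ has the constant sign of $c$ a.e.

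To fix the sign, compare with $|\overline{V}|$. It has the same norm, and by the monotonicity from Step 1 it satisfies $\lambda_1(|\overline{V}|)>\lambda_1(\overline{V})$ unless $\overline{V}=|\overline{V}|$. Maximality therefore forces $\overline{V}\ge0$, so $c>0$ and hence $\overline{V}>0$ a.e.

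The minimizer is handled the same way. We get $|\underline{u}|^p=c'|\underline{V}|^{q-2}\underline{V}$ with $c'\neq0$. Comparing with $-|\underline{V}|$, which has the same norm and strictly smaller $\lambda_1$ unless $\underline{V}=-|\underline{V}|$, minimality gives $\underline{V}\le0$. Hence $c'<0$ and $\underline{V}<0$ a.e.
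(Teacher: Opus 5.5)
Your proof is correct, and it departs from the paper's in three useful ways.

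First, to place the optimizers on $B'$ the paper shows $\overline V\ge 0$ by comparing with $|\overline V|$, then rescales to $\overline V/\|\overline V\|_{L^q(\Omega)}$. That requires the sign, and $\overline V\not\equiv 0$, in advance. Your shift $\overline V\pm\varepsilon$, using $\lambda_1(V+c)=\lambda_1(V)+c$, needs no sign information at all. Your strict monotonicity, evaluated at the positive eigenfunction of the larger potential, is also the rigorous form of the paper's comparison step. The paper takes an infimum over $u$ of strict inequalities, which by itself only yields $\lambda_1(|\overline V|)\ge\lambda_1(\overline V)$.

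Second, the paper simply assumes that differentiable curves in $B'$ with a prescribed tangent vector exist. Your normalized line $(V+tF)/\|V+tF\|_{L^q(\Omega)}$ actually supplies them.

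Third, and most substantively, the paper's proof of this theorem only establishes $\overline V\ge 0$ and $\underline V\le 0$. The strict signs a.e. are obtained only afterwards, from the full-support and proportionality argument of Theorem \ref{F-T4} with characteristic-function directions. Your annihilator argument gives $|\overline u|^p=c\,|\overline V|^{q-2}\overline V$ a.e. directly, since both densities lie in $L^{q/(q-1)}(\Omega)$, as you note. It therefore settles the strict-sign claim inside the present proof and yields the identity of Theorem \ref{F-T4} as a byproduct. The paper's route avoids this duality step but, as written, defers the strict inequalities to the next theorem.
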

\begin{proof}
    If possible, assume that there exists a subset of $\Omega$ with positive measure such that $\overline{V}<0$. Then $|\overline{V}|\in \mathcal{A}$ and
    \begin{equation*}
        \|u\|_{X_0^{s,p}(\Omega}^p+\int_\Omega |\overline{V}| |u|^p dx > \|u\|_{X_0^{s,p}(\Omega}^p+\int_\Omega \overline{V} |u|^p dx=\lambda_1(\overline{V}),
    \end{equation*}
    for all $u\in \mathbb{X}_0^{s,p}(\Omega)$ with $\|u\|_{L^p(\Omega)}=1$. Taking the infimum over $\{u\in \mathbb{X}_0^{s,p}(\Omega): \|u\|_{L^p(\Omega)}=1\}$ and applying Theorem \ref{F-E.T1}, we get $\lambda_1(|\overline{V}|)>\lambda_1(\overline{V})$. This contradicts our assumption that $\overline{V}$ maximizes $\lambda_1$ in $\mathcal{A}$. Thus, $\overline{V}\geq 0$ a.e in $\Omega$. Observe that we have $\overline{V}>0$ in a set having positive measure, since $\overline{V}$ is a maximizer of $\lambda_1$ in $\mathcal{A}$. Now, if $\|\overline{V}\|_{L^q(\Omega)}<1$, then clearly, we have $\tilde{V}=\frac{\overline{V}}{\|\overline{V}\|_{L^q(\Omega)}} \in B'$ and $\tilde{V}>\overline{V}$ in a set of positive measure. Then, we have
    \begin{equation*}
        \|u\|_{X_0^{s,p}(\Omega}^p+\int_\Omega \tilde{V} |u|^p dx > \|u\|_{X_0^{s,p}(\Omega}^p+\int_\Omega \overline{V} |u|^p dx,
    \end{equation*}
    for all $u\in \mathbb{X}_0^{s,p}(\Omega)$ with $\|u\|_{L^p(\Omega)}=1$. This again contradicts our assumption that $\overline{V}$ is the maximizer of $\lambda_1$ in $\mathcal{A}$. Therefore, it follows that $\overline{V}\in B'$. Similarly, we deduce that $\underline{V}\leq 0$ a.e. in $\Omega$ and $\underline{V}\in B'$. Now, let $F \in T(B';\overline{V})$ and $G\in T(B', \underline{V})$. Consider two differentiable curves $\gamma_1,\gamma_2:(-1,1) \rightarrow L^q(\Omega)$ with images contained in $B'$ satisfying $\gamma_1(0)=\overline{V}, \ \gamma_2(0)=\underline{V}, \ \gamma_1'(0)=F$ and $\gamma_2'(0)=G$. Since $\overline{V}$ maximizes $\lambda_1 \circ \gamma_1$ in $(-1,1)$ and $\underline{V}$ minimizes $\lambda_1 \circ \gamma_2$ in $(-1,1)$, we have $\lambda_1 \circ \gamma_1)'(0)=(\lambda_1 \circ \gamma_2)'(0)=0$. Using Lemma \ref{F-L4}, we obtain
     \begin{equation*}
        \int_\Omega F|\overline{u}|^p dx=\int_\Omega G|\underline{u}|^p dx=0.
    \end{equation*}
    
\end{proof}

\begin{theorem}\label{F-T4}
    For any subset $A$ of $\Omega$ with $|A|>0$, we have $\overline{V}\not \equiv 0 \not \equiv \underline{V}$ in $A$ i.e., $\operatorname{supp}(\overline{V})=\operatorname{supp}(\underline{V})=\Omega$. Moreover, there exist constants $C_1>0$ and $C_2>0$ such that
    $$|\overline{u}|^p=C_1|\overline{V}|^{q-1} \text{ and } |\underline{u}|^p=C_1|\underline{V}|^{q-1} \text{ a.e. in } \Omega.$$
\end{theorem}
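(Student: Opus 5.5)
The plan is to exploit the orthogonality relation of Theorem \ref{F-T3}. For $\overline{V}\in B'$ the annihilator of $T(B';\overline{V})$ in $L^{\frac{q}{q-1}}(\Omega)$ is one-dimensional, spanned by $|\overline{V}|^{q-2}\overline{V}$. Indeed, $T(B';\overline{V})$ is the kernel of the nonzero bounded functional
\[
\ell(F)=\int_\Omega |\overline{V}|^{q-2}\overline{V}F\,dx ,
\]
which is nonzero because $\ell(\overline{V})=\|\overline{V}\|_{L^q(\Omega)}^q=1$. Since $|\overline{u}|^p\in L^{\frac{q}{q-1}}(\Omega)$ by Remark \ref{F-R2}, the functional $F\mapsto\int_\Omega F|\overline{u}|^p\,dx$ is bounded on $L^q(\Omega)$, and by Theorem \ref{F-T3} it vanishes on $\ker\ell$. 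A functional vanishing on the kernel of another is a scalar multiple of it, so there is $c\in\mathbb{R}$ with
\[
\int_\Omega F|\overline{u}|^p\,dx = c\int_\Omega |\overline{V}|^{q-2}\overline{V}F\,dx \quad\text{for all } F\in L^q(\Omega).
\]
Concretely, decompose $F=(F-\ell(F)\overline{V})+\ell(F)\overline{V}$ and take $c=\int_\Omega\overline{V}|\overline{u}|^p\,dx$. Duality then gives $|\overline{u}|^p=c|\overline{V}|^{q-2}\overline{V}$ a.e. in $\Omega$.

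Next I would use the signs. By Theorem \ref{F-T3}, $\overline{V}\ge0$ (indeed $>0$), so the identity reads $|\overline{u}|^p=c\,\overline{V}^{q-1}$. Since $\overline{u}>0$ a.e. by Theorem \ref{F-E.T3}, we get $c\neq0$, and comparing signs gives $c>0$; set $C_1=c$. The same identity yields the support statement: if $\overline{V}\equiv0$ on some $A$ with $|A|>0$, then $\overline{u}\equiv0$ on $A$, contradicting the strict positivity of $\overline{u}$. So $\overline{V}>0$ a.e., which is what $\operatorname{supp}(\overline{V})=\Omega$ means here.

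For the minimizer the argument is identical. We get $|\underline{u}|^p=c'|\underline{V}|^{q-2}\underline{V}$ with $c'=\int_\Omega\underline{V}|\underline{u}|^p\,dx$. Since $\underline{V}\le0$, this gives $|\underline{u}|^p=-c'|\underline{V}|^{q-1}$, and positivity of $\underline{u}$ forces $-c'>0$. Set $C_2=-c'$; the statement's second constant should be $C_2$, not $C_1$. Full support of $\underline{V}$ follows as before.

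The main obstacle is that Theorem \ref{F-T3} gives the orthogonality only for directions $F$ realised as $\gamma'(0)$ of a differentiable curve in $B'$, so I must check that every $F\in T(B';\overline{V})$ arises this way. I would take $\gamma(t)=(\overline{V}+tF)/\|\overline{V}+tF\|_{L^q(\Omega)}$, which is well defined for small $|t|$ and can be reparametrised to $(-1,1)$. The map $t\mapsto\|\overline{V}+tF\|_{L^q(\Omega)}$ is differentiable for $1<q<\infty$, with derivative $\int_\Omega|\overline{V}|^{q-2}\overline{V}F\,dx=0$ at $t=0$. Hence $\gamma(0)=\overline{V}$, $\gamma'(0)=F$, and $\gamma$ takes values in $B'$, so Lemma \ref{F-L4} and Theorem \ref{F-T3} apply to every tangent direction.
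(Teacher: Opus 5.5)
\textbf{Verdict: correct, by a different route.} Your argument is correct and reaches the statement by a different route from the paper's.

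\textbf{The paper's route.} It works with explicit test directions. It first proves $\operatorname{supp}(\overline{V})=\Omega$ by taking $F=\chi_U$ for a set $U$ on which $\overline{V}$ vanishes. Only after that, knowing $\int_U|\overline{V}|^{q-1}\,dx\neq 0$ for every $U$ of positive measure, does it use differences of normalized characteristic functions of two sets $U_1,U_2$. These show that the ratio $\int_U|\overline{u}|^p\,dx/\int_U|\overline{V}|^{q-1}\,dx$ is independent of $U$, and a level-set argument then gives the pointwise identity.

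\textbf{Your route.} You observe that a bounded functional on $L^q(\Omega)$ vanishing on the hyperplane $\ker\ell$ is a multiple of $\ell$. This gives $|\overline{u}|^p=c|\overline{V}|^{q-2}\overline{V}$ in one step, with the multiplier identified explicitly as $c=\int_\Omega\overline{V}|\overline{u}|^p\,dx$. Full support then follows from this identity and $\overline{u}>0$, instead of being needed beforehand to make the normalization legitimate.

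\textbf{What your version gains.}
\begin{enumerate}
\item It uses only the non-strict signs $\overline{V}\ge 0$ and $\underline{V}\le 0$. That is all the proof of Theorem \ref{F-T3} actually establishes; its statement claims strict signs, which you recover instead.
\item Your curve $\gamma(t)=(\overline{V}+tF)/\|\overline{V}+tF\|_{L^q(\Omega)}$ shows that every tangent direction is realized by a differentiable curve in $B'$. The paper's proof of Theorem \ref{F-T3} assumes this without comment.
\item You are right that the constant for $\underline{u}$ should be $C_2$, not $C_1$.
\end{enumerate}

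\textbf{A small point on the curve.} When you move the parameter domain to $(-1,1)$, either use a reparametrization with derivative $1$ at $0$, such as $t\mapsto\varepsilon\tanh(t/\varepsilon)$, or note that the linear rescaling $t\mapsto\gamma(\varepsilon t)$ has derivative $\varepsilon F$ at $0$. The second option gives the same orthogonality by linearity.

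\textbf{Trade-off.} The paper's route is more hands-on, since only characteristic functions are ever tested. It is longer, though, and has to treat support before the identity. Your duality step is the same density fact packaged once for all $F\in L^q(\Omega)$.
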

\begin{proof}
    The theorem follows by an argument similar to \cite[Proposition 3.10, Theorem 3.11]{FD2006}. However, for the sake of completeness, we present the proof. If possible, assume that $\operatorname{supp}(\overline{V}) \neq \Omega.$ Thus, there exists a set $U \subset \Omega$ with positive measure such that $U \cap \operatorname{supp}(\overline{V})=\emptyset$. Consider the characteristic function of $U$ denoted by $F=\chi_{U}$. Clearly, we have
    $$\int_\Omega |\overline{V}|^{q-2}\overline{V}F dx=0.$$
    Thus, $F\in T(B';\overline{v})$. From Theorem \ref{F-T3}, we deduce that
    $$0=\int_\Omega F |\overline{u}|^p dx=\int_U |\overline{u}|^p dx.$$
    Thus it follows that $\overline{u}\equiv 0$ a.e. in $U$, which contradicts the strict positivity of $\overline{u}$. Thus, $\operatorname{supp}(\overline{V})=\Omega$. Similarly, we deduce $\operatorname{supp}(\underline{V})=\Omega$. Now, let $U_1,\ U_2$ be two subsets of  $\Omega$ with positive measure. Then $\overline{u}$ is not equivalent to zero a.e. in both $U_1$ and $U_2$. Thus,
    $$\int_{U_1}|\overline{V}|^{q-1} dx\neq 0 \neq \int_{U_2}|\overline{V}|^{q-1} dx.$$
    Now, define the function $F$ by 
    $$F(x):=\frac{\chi_{U_1}\overline{V}}{\int_{U_1}|\overline{V}|^{q-1} dx}-\frac{\chi_{U_2}\overline{V}}{\int_{U_2}|\overline{V}|^{q-1} dx}, \ x\in \Omega.$$
    Then, we have 
    \begin{equation*}
        \int_{\Omega} |\overline{V}|^{q-2} \overline{V}F dx =\frac{\int_{U_1}|\overline{V}|^{q-1} dx}{\int_{U_1}|\overline{V}|^{q-1} dx}-\frac{\int_{U_2}|\overline{V}|^{q-1} dx}{\int_{U_2}|\overline{V}|^{q-1} dx}=1-1=0.
    \end{equation*}
    Therefore, $F\in T(B';\overline{V})$. By Theorem \ref{F-T3}, we obtain the following
    $$0=\int_\Omega F|\overline{u}|^p dx=\frac{\int_{U_1}|\overline{u}|^{q-1} dx}{\int_{U_1}|\overline{V}|^{q-1} dx}-\frac{\int_{U_2}|\overline{u}|^{q-1} dx}{\int_{U_2}|\overline{V}|^{q-1} dx}.$$
    Since $U_1, \ U_2$ are arbitrary subsets of $\Omega$ with positive measure, we get a constant $C_1>0$ such that
    \begin{equation}\label{F-T4-1}
        C_1=\frac{\int_{U}|\overline{u}|^{q-1} dx}{\int_{U}|\overline{V}|^{q-1} dx}, \text{ for all } U \subset\Omega, \ |U|>0.
    \end{equation}
    Consider the set $U \subset \Omega$ given by $U:=\left\{\left(|\overline{u}|^p-C_1|\overline{V}|^{q-1}\right)>0\right\}$. If possible, assume that $|U|>0$. Then, \eqref{F-T4-1} implies \begin{equation}\label{F-T4-2}
        \int_U \left(|\overline{u}(x)|^p-C_1|\overline{V}(x)|^{q-1}\right) dx=0.
    \end{equation}
  But, from the positivity of the integrand is $|\overline{u}(x)|^p-C_1|\overline{V}(x)|^{q-1}>0$ in $U$, we also get
  $$\int_U \left(|\overline{u}(x)|^p-C_1|\overline{V}(x)|^{q-1}\right) dx>0.$$
  This contradicts \eqref{F-T4-2} and hence $|U|=0$. Similarly, we also get $$\left|\left\{\left(|\overline{u}|^p-C_1|\overline{V}|^{q-1}\right)<0\right\}\right|=0.$$ Hence $C_1|\overline{V}|^{q-1}\equiv |\overline{u}|^p$ a.e. in $\Omega$. Using the above approach, one can conclude that $C_1|\underline{V}|^{q-1}\equiv |\underline{u}|^p$ a.e. in $\Omega$. 
\end{proof}
\noindent \textbf{Conflict of interest statement} On behalf of the authors, the corresponding author states that there is no conflict of interest.
\newline
\textbf{Data availability statement:} Data sharing does not apply to this article as no datasets were generated or analyzed during the current study.

\section*{Acknowledgement}
R. Lakshmi thanks the Ministry of Education, Government of India, for the financial assistance. S. Ghosh gratefully acknowledges financial support for this research under the ARG-MATRICS grant No. ANRF/ARGM/2025/001570/MTR, Anusandhan National Research Foundation (ANRF), Government of India. R.K. Giri acknowledges the DST-FIST program (Govt. of India) for providing financial support for setting up the research and computing lab facility at the Department of Mathematics, The LNM Institute of Technology, Jaipur, under the scheme “Fund for Improvement of Science and Technology” (FIST - No. SR/FST/MS-I/2018/24).


\begin{thebibliography}{10}

\bibitem{AH1998}
W.~Allegretto and Y.~X. Huang.
\newblock A {P}icone's identity for the {$p$}-{L}aplacian and applications.
\newblock {\em Nonlinear Anal.}, 32(7):819--830, 1998.

\bibitem{A2008}
S.~Amghibech.
\newblock On the discrete version of {P}icone's identity.
\newblock {\em Discrete Appl. Math.}, 156(1):1--10, 2008.

\bibitem{AH1987}
M.~S. Ashbaugh and E.~M. Harrell.
\newblock Maximal and minimal eigenvalues and their associated nonlinear equations.
\newblock {\em J. Math. Phys.}, 28(8):1770–1786, 1987.

\bibitem{BDG2023}
N.~Biswas, U.~Das, and M.~Ghosh.
\newblock On the optimization of the first weighted eigenvalue.
\newblock {\em Proc. Roy. Soc. Edinburgh Sect. A}, 153(6):1777--1804, 2023.

\bibitem{BH17} V. Bonnaillie-Noel, B. Helffer,
\newblock {\em Nodal and spectral minimal partitions-the state of the art in 2016}, 
\newblock {\em In Shape optimization and Spectral Theory}, pages 353-397, De Gruyter, Warsaw, 2017.

\bibitem{B2011}
H.~Brezis.
\newblock {\em Functional analysis, {S}obolev spaces and partial differential equations}.
\newblock Springer, New York, 2011.

\bibitem{BDD2022}
S.~Buccheri, J.~V. da~Silva, and L.~H. de~Miranda.
\newblock A system of local/nonlocal {$p$}-{L}aplacians: the eigenvalue problem and its asymptotic limit as {$p\to\infty$}.
\newblock {\em Asymptot. Anal.}, 128(2):149--181, 2022.

\bibitem{B1987}
G.~R. Burton.
\newblock Rearrangements of functions, maximization of convex functionals, and vortex rings.
\newblock {\em Math. Ann.}, 276(2):225--253, 1987.

\bibitem{B1989}
G.~R. Burton.
\newblock Rearrangements of functions, saddle points and uncountable families of steady configurations for a vortex.
\newblock {\em Acta Math.}, 163(3-4):291--309, 1989.

\bibitem{CM1990}
S.~J. Cox and J.~R. McLaughlin.
\newblock Extremal eigenvalue problems for composite membranes. {I}, {II}.
\newblock {\em Appl. Math. Optim.}, 22(2):153--167, 169--187, 1990.

\bibitem{CEP2009}
F.~Cuccu, B.~Emamizadeh, and G.~Porru.
\newblock Optimization of the first eigenvalue in problems involving the {$p$}-{L}aplacian.
\newblock {\em Proc. Amer. Math. Soc.}, 137(5):1677--1687, 2009.

\bibitem{CQ2009}
M.~Cuesta and H.~R. Quoirin.
\newblock A weighted eigenvalue problem for the $p$-{L}aplacian plus a potential.
\newblock {\em NoDEA Nonlinear Differential Equations Appl.}, 16(4):469--491, 2019.

\bibitem{PBR2018}
L.~Del~Pezzo, J.~Fernández~Bonder, and L.~López~Ríos.
\newblock An optimization problem for the first eigenvalue of the $p$-fractional laplacian.
\newblock {\em Math. Nachr.}, 291(4):632–651, 2018.

\bibitem{PB2010}
L.~M. Del~Pezzo and J.~Fernández~Bonder.
\newblock An optimization problem for the first weighted eigenvalue problem plus a potential.
\newblock {\em Proc. Amer. Math. Soc.}, 138(10):3551–3567, 2010.

\bibitem{DFR2019}
L.~M. Del~Pezzo, R.~Ferreira, and J.~D. Rossi.
\newblock Eigenvalues for a combination between local and nonlocal {$p$}-{L}aplacians.
\newblock {\em Fract. Calc. Appl. Anal.}, 22(5):1414--1436, 2019.

\bibitem{DD2012}
F.~Demengel and G.~Demengel.
\newblock {\em Functional spaces for the theory of elliptic partial differential equations}.
\newblock Springer, London; EDP Sciences, Les Ulis, 2012.

\bibitem{DKP2016}
A.~Di~Castro, T.~Kuusi, and G.~Palatucci.
\newblock Local behavior of fractional {$p$}-minimizers.
\newblock {\em Ann. Inst. H. Poincar\'e{} C Anal. Non Lin\'eaire}, 33(5):1279--1299, 2016.

\bibitem{NPV2012}
E.~Di~Nezza, G.~Palatucci, and E.~Valdinoci.
\newblock Hitchhiker's guide to the fractional {S}obolev spaces.
\newblock {\em Bull. Sci. Math.}, 136(5):521--573, 2012.


\bibitem{FD2006}
J.~Fernández~Bonder and L.~M. Del~Pezzo.
\newblock An optimization problem for the first eigenvalue of the $p$-laplacian plus a potential.
\newblock {\em Commun. Pure Appl. Anal.}, 5(4):675–690, 2006.

\bibitem{FP2014}
G.~Franzina and G.~Palatucci.
\newblock Fractional $p$-eigenvalues.
\newblock {\em Riv. Math. Univ. Parma (N.S.)}, 5(2):373--386, 2014.

\bibitem{AP1987}
J.~P. Garc\'{\i}a~Azorero and I.~Peral~Alonso.
\newblock Existence and nonuniqueness for the {$p$}-{L}aplacian: nonlinear eigenvalues.
\newblock {\em Comm. Partial Differential Equations}, 12(12):1389--1430, 1987.


\bibitem{BS2007}
F. Gesztesy, P. Deift, C. Galvez, P. Perry, and W. Schlag (Eds.) Spectral Theory and Mathematical Physics: A Festschrift in Honor of Barry Simon's 60th Birthday, Proceedings of Symposia in Pure Mathematics, Vol. 76, part 2, American Mathematical Society, Providence, RI, 2007.

\bibitem{G2025}
M.~Ghosh.
\newblock On the optimization of the first weighted eigenvalue of the fractional {L}aplacian.
\newblock {\em Math. Nachr.}, 298(10):3251--3271, 2025.


\bibitem{G18} G. Gilboa,
\newblock {\em Nonlinear Eigenproblems in Image Processing and Computer Vision}, 
\newblock Springer, xx+172 pages, 2018.



\bibitem{H1984}
E.~M. Harrell~II.
\newblock Hamiltonian operators with maximal eigenvalues.
\newblock {\em J. Math. Phys.}, 25(1):48--51, 1984.

\bibitem{K1955}
M.~G. Krein.
\newblock On certain problems on the maximum and minimum of characteristic values and on the {L}yapunov zones of stability.
\newblock {\em Amer. Math. Soc. Transl. (2)}, 1:163--187, 1955.

\bibitem{LG2025}
R.~Lakshmi and S.~Ghosh.
\newblock Mixed local and nonlocal eigenvalue problems in the exterior domain.
\newblock {\em Fract. Calc. Appl. Anal.}, 28(4):1831–1866, 2025.

\bibitem{LGG2026}
R.~Lakshmi, R.~Kr. Giri, and S.~Ghosh.
\newblock A weighted eigenvalue problem for mixed local and nonlocal operators with potential.
\newblock {\em Math. Nachr.}, 299(2):367--396, 2026.

\bibitem{L2023}
G.~Leoni.
\newblock {\em A First Course in Fractional {S}obolev spaces}, volume 229.
\newblock American Mathematical Society, Providence, RI, 2023.

\bibitem{LL2014}
E.~Lindgren and P.~Lindqvist.
\newblock Fractional eigenvalues.
\newblock {\em Calc. Var. Partial Differential Equations}, 49(1-2):795--826, 2014.

\bibitem{L1990}
P.~Lindqvist.
\newblock On the equation {${\rm div}\,(|\nabla u|^{p-2}\nabla u)+\lambda|u|^{p-2}u=0$}.
\newblock {\em Proc. Amer. Math. Soc.}, 109(1):157--164, 1990.

\bibitem{MPV2013}
M. Marras, G. Porru and S. Vernier-Piro.
\newblock Optimization problems for eigenvalues of {$p$}-{L}aplace equations.
\newblock {\em J. Math. Anal. Appl.}, 398(2):766--775, 2013.

\bibitem{R1987}
W.~Rudin.
\newblock {\em Real and Complex Analysis, Third Edition}.
\newblock McGraw-Hill Book Co., New York, 1987.

\bibitem{BS2022}
B. Simon. 
\newblock Twelve tales in mathematical physics: An expanded Heineman prize lecture. 
\newblock {\em Journal of Mathematical Physics} 63(2), Paper no. 021101, 2022. 
\end{thebibliography}

\end{document}